\DeclareFontFamily{OT1}{pzc}{}
\DeclareFontShape{OT1}{pzc}{m}{it}{<-> s * [1.10] pzcmi7t}{}
\DeclareMathAlphabet{\mathpzc}{OT1}{pzc}{m}{it}
\DeclareMathAlphabet{\mathcal}{OMS}{cmsy}{m}{n} % Reverts \mathcal to ``standard'' (to make O_2 look as it should...) 
\definecolor{DarkPurple}{rgb}{0.40,0.0,0.20}
\newcommand{\G}{\mathcal{G}}
\newcommand{\A}{\mathcal{A}}
\newcommand{\Sab}{S_{\lf,(\alpha, \beta)}^{p}}
\newcommand{\Fp}{F_{\lambda}^{p}}
\newcommand{\Cred}{C_{r}^{*}}
\newcommand{\lf}{\mathscr{l}}
\newcommand{\C}{\mathbb{C}}
\newcommand{\R}{\mathbb{R}}
\newcommand{\N}{\mathbb{N}}
\DeclareMathOperator{\ceil}{ceil}
\newtheorem{lemma}{Lemma}[section]
\newtheorem{corollary}[lemma]{Corollary}
\newtheorem{theorem}[lemma]{Theorem}
\newtheorem*{theorem*}{Theorem}
\newtheorem{proposition}[lemma]{Proposition}
\newtheorem{introtheorem}{Theorem}
\theoremstyle{definition}
\newtheorem{definition}[lemma]{Definition}
\newtheorem{example}[lemma]{Example}
\newtheorem{remark}[lemma]{Remark}
\title[]{K-theory invariance of $L^p$-operator algebras associated with étale groupoids of strong subexponential growth}
\author[1]{Are Austad}
\author[2]{Eduard Ortega}
\author[3]{Mathias Palmstrøm}
\address{}
\email{}
\thanks{The first named author was  supported by The Research Council of Norway project 324944. }
\address{Department of Mathematics, Faculty of Mathematics and Natural Sciences, University of Oslo, Oslo, Norway}
\email{areaus@math.uio.no}
\address{Department of Mathematical Sciences, Faculty of Information Technology and Electrical Engineering, NTNU -- Norwegian University of Science and Technology, Trondheim, Norway}
\email{eduard.ortega@ntnu.no}
\address{Department of Mathematical Sciences, Faculty of Information Technology and Electrical Engineering, NTNU -- Norwegian University of Science and Technology, Trondheim, Norway}
\email{mathias.palmstrom@ntnu.no}
\numberwithin{equation}{section} 
\begin{document}
	
\renewcommand{\thefootnote}{}
\footnotetext{
	\textit{MSC 2020 classification: 46L80, 47L10, 46L87 }}
	
\begin{abstract}
	We introduce 
	the notion of (strong) subexponential growth for étale groupoids and study its basic properties. 
	In particular, we show that the K-groups of the associated groupoid $L^p$-operator algebras are independent of $p \in [1,\infty)$ whenever the groupoid has strong subexponential growth. 
	Several examples are discussed. Most significantly, we apply classical tools from analytic number theory to exhibit an example of an étale groupoid associated with a shift of infinite type which has strong subexponential growth, but not polynomial. 
\end{abstract}
	
\maketitle

\section{Introduction} \label{sec: introduction}
For $p \in [1, \infty)$,  an $L^p$-operator algebra is a Banach algebra which admits an isometric representation on some $L^p$-space. Such Banach algebras were first considered by Herz in \cite{Herz:TheTheoryOfPSpacesWithAnApplicationToConvolution} where he studied the $L^p$-operator algebra generated by the left regular representation of a locally compact group. In the 2010s, Phillips initiated their study once more, leading a program aimed at generalizing the modern theory of C*-algebras to $L^p$-operator algebras, and they have since seen a growing amount of interest among operator algebraists, see for example \cite{Choi15directlyfinite, ChoiGardellaThiel:RigidityResultsForLpOperatorAlgebrasAndApplications, ChungLi2018roerigidity, CortinasRodriguez19OrientedGraphs, Gardella:AModernLook, GardellaLupini:RepresentationsOfEtaleGroupoidsOnLpSpaces, GardellaThiel:GroupAlgebrasActingOnLpSpaces, GardellaThiel:RepresentationsOfPConvolutionAlgebrasOnLqSpaces, PhillipsViola2020}. Both historically and currently, $L^p$-operator algebras are studied mostly through examples, but there have been attempts to establish a more general theory (see \cite{BlecherPhillips:LpOperatorAlgebrasWithApproximateIdentities, GardellaThiel:BanachAlgebrasGeneratedByAnInvertibleIsometryOfAnLpSpace, GardellaThiel:ExtendingRepresentationsOfBanachAlgebrasToTheirBiduals}). However, as of yet, there is no abstract characterization of such Banach algebras like there is for C*-algebras, and lacking the richness of C*-theory, their study often require different techniques from that of C*-algebras.

Given a combinatorial or dynamical object, one can in many cases associate an $L^p$-operator algebra in a way that reflects the combinatorial or dynamical structure of the object, which is a source of interesting examples. Such an object can for example be a graph, a 
group acting on a locally compact space, or a locally compact Hausdorff groupoid. Given such an object and associated $L^p$-operator algebras, for $p \in [1, \infty)$, it is a natural problem to investigate the extent to which the structure of the Banach algebras differ for various exponents $p \in [1, \infty)$. For instance,  natural questions are whether the properties of simplicity or monotraciality are shared by some or all $p \in [1, \infty)$, variants of which have been 
studied in \cite{BaradynKwasniewski:TopFreeActionsAndIdealsInTwistedBanachAlgebraCrossedProducts, BardadynKwaśniewskiMcKee:BanachAlgebrasAssociatedToTwistedÉtaleGroupoidsSimplicityAndPureInfiniteness, PooyaShirin:SimpleReducedLpOperatorCrossedProductsWithUniqueTrace, PhillipsSimplicity2019, Phillips:SimplicityOfUHFAndCuntzAlgebrasOnLpSpaces}.
One crucial aspect in the theory of $L^p$-operator algebras where the value of the exponent plays a significant role is in the so-called rigidity problem
for étale groupoids. In the case where $p=2$, there exist non-isomorphic étale groupoids $\mathcal{G}$ and $\mathcal{G}^{\prime}$ (which are not even continuously orbit equivalent), yet their reduced $C^*$-algebras are isomorphic, that is $C^*_r(\mathcal{G}) \cong C^*_r(\mathcal{G}^{\prime})$.
However, this phenomenon does not occur for the reduced $L^p$-operator algebras for $p\neq 2$ (see \cite{ChoiGardellaThiel:RigidityResultsForLpOperatorAlgebrasAndApplications,GardellaThiel:RigidityResultsGroupsForLpOperatorAlgebras, HetlandOrtega:RigidityOfTwistedGroupoidLpOperatorAlgebras}). This is due to the rigidity of the homotopy classes of invertible isometries on 
$L^p$-spaces, as established by Lamperti's theorem.

Another natural question in this spirit is whether or not their K-theory differs. Phillips computed the K-groups of the $L^p$-Cuntz algebras in \cite{Phillips:CrossedProductsOfLpAlgebrasAndKtheoryOfCuntzAlgebras}, finding that the K-groups are invariant of the exponent $p \in [1,\infty)$. Further, 
Liao and Yu obtained a similar result in \cite{LiaoYu:KTheoryOfGroupBanachAlgebrasAndRD}, to the effect that for a fairly large class of groups, the K-groups of the reduced group $L^p$-operator algebras are invariant of the exponent $p \in [1, \infty)$, and in \cite{WangWang:NoteOnTheEllpToeplitzAlgebra} Wang and Wang showed similarly that the K-groups of the $L^p$-Toeplitz algebras are invariant of the exponent $p \in (1, \infty)$. Inspired by the observation that all of these $L^p$-operator algebras have groupoid models, the authors attempted in \cite{AustadOrtegaPalmstrom:PolynomialGrowthAndPropertyRDpForEtaleGroupoids} to provide a unifying result for all of the aforementioned cases. The attempt was partially successful in that it was proved that for an étale groupoid which has polynomial growth with respect to a continuous length function, the K-groups are indeed invariant of the exponent, see \cite[Theorem 4.7]{AustadOrtegaPalmstrom:PolynomialGrowthAndPropertyRDpForEtaleGroupoids}. This result includes that of Wang and Wang as a special case, as well as Liao and Yu's result when restricting to discrete groups of polynomial growth. However, it fails to include Phillips' result for the $L^p$-Cuntz algebras as their groupoid models have exponential growth (see \cref{ex: a class of examples of exponential growth groupoids}). The purpose of the present paper is to extend \cite[Theorem 4.7]{AustadOrtegaPalmstrom:PolynomialGrowthAndPropertyRDpForEtaleGroupoids} to cover a larger class of étale groupoids, namely those of strong subexponential growth (see \cref{def: (strongly) subexponential growth}). Our first main result is the following. Here $F^p_\lambda (\G)$ is the reduced $L^p$-operator algebra of $\G$ (see \cref{sec: preliminaries}).

\begin{introtheorem} [cf. \cref{thm: K-groups are independent of p}] \label{introthm: strong subexponential growth gives invariance in K-theory}
	If $\G$ is an étale groupoid which has strong subexponential growth with respect to a locally bounded length function, then $K_\ast (F_{\lambda}^{p}(\G))$, for $\ast = 0,1$, is independent of $p \in [1, \infty)$.
\end{introtheorem}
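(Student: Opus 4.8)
The plan is to mimic the strategy of \cite[Theorem 4.7]{AustadOrtegaPalmstrom:PolynomialGrowthAndPropertyRDpForEtaleGroupoids}, where $p$-independence of the K-groups was deduced from the existence of a single smooth dense subalgebra sitting spectrally invariantly inside every $\Fp(\G)$. Concretely, I would construct a Fréchet algebra $\mathcal{S}(\G)$ --- built from $C_c(\G)$ and the length function $\lf$, and manifestly independent of $p$ --- together with continuous dense inclusions $\mathcal{S}(\G) \hookrightarrow \Fp(\G)$ for every $p \in [1,\infty)$, and then invoke the principle (used already in the polynomial case) that a dense, spectrally invariant, i.e.\ holomorphically closed, Fréchet subalgebra of a Banach algebra induces an isomorphism on K-theory. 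Since $\mathcal{S}(\G)$ does not depend on $p$, this at once yields $K_\ast(\Fp(\G)) \cong K_\ast(\mathcal{S}(\G))$ for all $p$, hence the asserted invariance.

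The subalgebra $\mathcal{S}(\G)$ should be a weighted convolution algebra: for a cofinal family of subexponentially growing weights $\omega$ associated with $\lf$ (for instance of the form $\exp(\lf^{\alpha})$ with $0<\alpha<1$), take the functions on $\G$ whose $\omega$-weighted norms are all finite, and topologise by the corresponding seminorms. First I would verify that $\mathcal{S}(\G)$ is a Fréchet algebra under convolution and that $C_c(\G) \subseteq \mathcal{S}(\G)$, so that density in each $\Fp(\G)$ is automatic once the inclusion is continuous. Continuity of $\mathcal{S}(\G) \hookrightarrow \Fp(\G)$ reduces, via the standard bound of the operator norm by the $I$-norm $\|f\|_{\Fp(\G)} \le \|f\|_{I}$, to controlling the weighted norms, which holds whenever $\omega \ge 1$; here local boundedness of $\lf$ is what makes the seminorms well-defined on compactly supported functions, while the growth of $\G$ with respect to $\lf$ is what guarantees that a sufficiently rich family of admissible subexponential weights exists.

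The crux --- and the step I expect to be the main obstacle --- is spectral invariance of $\mathcal{S}(\G)$ in $\Fp(\G)$. In the polynomial-growth case this followed from property $RD^p$ together with an $\ell^2$-type Sobolev estimate; under the present weaker hypothesis I would instead exploit that \emph{strong} subexponential growth forces a Gelfand--Raikov--Shilov-type condition on the weights, namely $\omega(g^{n})^{1/n} \to 1$ along the groupoid. This is precisely what is needed to show that, for $a \in \mathcal{S}(\G)$, the spectral radius computed in $\mathcal{S}(\G)$ agrees with that computed in $\Fp(\G)$, so that an element invertible in $\Fp(\G)$ automatically has its inverse in $\mathcal{S}(\G)$. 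Making this rigorous in the groupoid $L^p$-setting --- where one must simultaneously control the $r$- and $s$-fibred structure and the dependence of the operator norm on $p$ --- is where the strength of the hypothesis, as opposed to mere subexponential growth, is used, and is the technical heart of the argument.

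Finally, with $\mathcal{S}(\G)$ dense and spectrally invariant in $\Fp(\G)$ for each $p$, the K-theory comparison gives $K_\ast(\Fp(\G)) \cong K_\ast(\mathcal{S}(\G))$, and since the right-hand side is manifestly independent of $p$, the theorem follows. I would also attend to the unitisation and local-unit bookkeeping required to apply the comparison in the possibly non-unital setting, which is routine but must be handled explicitly.
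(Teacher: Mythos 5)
Your overall architecture is the same as the paper's: weighted convolution algebras built from the weights $\omega_{\alpha,\beta}=\exp(\alpha\lf^{\beta})$, continuous dense inclusions into $\Fp(\G)$ via $\|\cdot\|_{\Fp}\leq\|\cdot\|_{I}$, spectral invariance, the K-theory comparison for holomorphically closed dense subalgebras, and finally $p$-independence of the subalgebra itself (the paper defines a priori $p$-dependent Fréchet algebras $S^{p}_{\lf}(\G)$ and then proves $S^{p}_{\lf}(\G)=S^{1}_{\lf}(\G)$ by two Hölder estimates, which is the honest version of your ``manifestly independent of $p$'' claim). But there is a genuine gap at exactly the step you flag as the crux, and your proposed mechanism for closing it would not work. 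You assert that strong subexponential growth ``forces'' the GRS-type condition $\omega(g^{n})^{1/n}\to 1$ and that this ``is precisely what is needed'' for spectral invariance. Neither half is right: the GRS condition is an automatic feature of any weight $\exp(\alpha\lf^{\beta})$ with $\beta<1$, holding for \emph{every} groupoid regardless of growth, so it cannot be where the growth hypothesis enters; and even in the classical group setting, GRS only yields spectral invariance of the weighted $\ell^{1}$-algebra inside the \emph{unweighted} $\ell^{1}$-algebra. Passing from there into the reduced operator algebra needs substantial extra input (symmetry or amenability when $p=2$), and nothing of that sort is available for general $p\in[1,\infty)$ or for étale groupoids. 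So your plan proves spectral invariance of the weighted algebra in the wrong ambient algebra.

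What the paper actually does at this point is a norm-controlled-inversion argument in the style of Öztop--Samei--Shepelska: the pointwise inequality $\omega(xy)/(\omega(x)\omega(y))\leq u(x)+u(y)$ with $u=\exp(-\alpha(2-2^{\beta})\lf^{\beta})$ (\cref{eq: important equation involving weights and auxilliary function}), combined with Young's inequality and growth-based summability of $\exp(-\gamma\lf^{\beta})$ for $\gamma>\alpha_{0}$ (\cref{lem: subexponential growth gives norm estimate of exponential}), yields the interpolation estimate $\|f\ast f\|_{\alpha,\beta}\leq C_{\alpha,\beta}\,\|f\|_{\alpha,\beta}^{1+\theta}\,\|f\|_{\Fp}^{1-\theta}$ of \cref{lem: interpolation inequality}; iterating on $f^{2n}$ gives $r_{\Sab(\G)}(f)\leq r_{\Fp(\G)}(f)$ and hence spectral invariance (\cref{prop: weighted Lp spaces are spectral invariant in reduced groupoid Lp}). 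This is where strong (as opposed to mere) subexponential growth is used --- the argument only works for pairs in $J(\alpha_{0},\beta_{0})$, i.e.\ when $\alpha(2-2^{\beta})>\alpha_{0}$, so not every subexponential weight in your ``cofinal family'' is admissible --- and it is also where the two fibred norms are reconciled: the $\G^{u}$-fibre estimate uses the isometric anti-isomorphism $\Fp(\G)\to F^{q}_{\lambda}(\G)$ for $p\in(1,\infty)$ and the bound $\|f\|_{\infty}\leq\|f\|_{\Fp}$ when $p=1$, which is how the paper avoids the duality arguments that excluded $p=1$ in the polynomial-growth predecessor. Without this interpolation inequality (or an equivalent substitute), your spectral invariance step, and with it the whole proof, remains unestablished.
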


The above result extends \cite[Theorem 4.7]{AustadOrtegaPalmstrom:PolynomialGrowthAndPropertyRDpForEtaleGroupoids} in several ways. First of all, étale groupoids with polynomial growth all have strong subexponential growth, but the converse is not true. It therefore covers a strictly larger class of examples. Secondly, it does not require the length function to be continuous, only locally bounded. Finally, and more subtly, the case $p = 1$ is included in this result.  It was out of necessity left out of \cite[Theorem 4.7]{AustadOrtegaPalmstrom:PolynomialGrowthAndPropertyRDpForEtaleGroupoids} due to its proof relying in part on duality arguments, an issue we remedy in this article by employing a novel approach using interpolation techniques from \cite{OztopSameiShepelska:TwistedOrliczAlgebrasAndCompleteIsomorphismToOperatorAlgebras, Samei&Shepelska:NormcontrolledInverseionInWeightedConvolutionAlgebras} to construct spectrally invariant subalgebras of the reduced $L^p$-operator algebras. Note that the construction of these algebras are new even in the $C^*$-algebra setting. 
By opting to use interpolation techniques, the proof of \cref{introthm: strong subexponential growth gives invariance in K-theory} is also both simpler and shorter than the proof of \cite[Theorem 4.7]{AustadOrtegaPalmstrom:PolynomialGrowthAndPropertyRDpForEtaleGroupoids}.

As mentioned, every étale groupoid with polynomial growth will have strong subexponential growth, and it can be observed that this class containment is strict.
%This 
Indeed, this is even the case for discrete groups since it is well known that the Grigorchuck group is an example of a group which has strong subexponential growth but not polynomial. Also, any action of the Grigorchuck group on a locally compact Hausdorff space yields an étale groupoid with the same growth properties. To obtain an example not related to groups, there are many metric spaces with intermediate growth arising from graph theory (see for example \cite{BondarenkoEtAl2012, KontogeorgiouWinter2022, Lehner2016, MiasnikovSavchuk2015}), and then by \cref{cor:growth-coarse-groupoids} the associated coarse groupoid has strong subexponential growth, but not polynomial. Establishing that the Grigorchuck group has intermediate growth %was a major accomplishment of Grigorchuck, solving at the time the important 
solved a big open question of whether or not there exist groups of intermediate growth. Using classical tools from analytic number theory as well as recent results by Brix, Hume and Li in \cite{BrixHumeLi:MinimalCovers}, we are able to provide an example of an étale groupoid with intermediate growth by comparatively much simpler means. The groupoid is a Renault-Deaconu-type groupoid associated with a certain shift that we call the \emph{ordered prime shift} (see \cref{subsec: shift groupoid}). This leads us to our second main result.

\begin{introtheorem} [cf. \cref{thm: étale groupoid built from prime shift}] \label{introthm: non-trivial example of an intermediate growth groupoid}
	Let $\G_X$ denote the Renault-Deaconu groupoid associated with the ordered prime shift space $X$, and let $\hat{\G}_X$ denote its cover groupoid. Then $\hat{\G}_X$ is an étale groupoid that has strong subexponential growth, but not polynomial growth.
\end{introtheorem}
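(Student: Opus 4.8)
The plan is to reduce the growth of $\hat{\G}_X$ to a purely combinatorial count in the shift space $X$, and then to control that count with prime-counting asymptotics. First I would recall the precise description of the ordered prime shift: a point of $X$ is, up to a shift, a $\{0,1\}$-sequence $0^{a_1}10^{a_2}10^{a_3}1\cdots$ in which the gaps between successive $1$'s are primes occurring in strictly increasing order. Since the increasing constraint forbids infinitely many finite patterns, $X$ is genuinely of infinite type, so the Renault--Deaconu groupoid $\G_X$ and its minimal cover $\hat{\G}_X$ are produced by the machinery of \cite{BrixHumeLi:MinimalCovers}; étaleness and Hausdorffness of $\hat{\G}_X$ are part of that construction, so the substantive work is the growth estimate. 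Throughout I would equip $\hat{\G}_X$ with the natural length function induced by shift depth, and check that it is locally bounded.

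The key reduction is the following. An element of $\hat{\G}_X$ with fixed source $u$ and length at most $n$ is, after lifting to the cover, determined by an admissible word of length at most $n$ that may be prepended to the corresponding tail; and for the ordered prime shift such an admissible prefix is exactly a choice of finitely many distinct primes, listed in increasing order, whose combined gap-length is at most $n$. Hence the fibered ball of radius $n$ is in bijection, up to a controlled factor arising from the cover and from the two directions $m,n$ in the relation $\sigma^m x=\sigma^n y$, with the set of partitions of integers $\le n$ into distinct prime parts. Writing $D(n)$ for the number of such partitions, the theorem reduces to showing that $D(n)$ is superpolynomial but subexponential in $n$.

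Both estimates are then classical. For the lower bound it suffices to retain only subsets of the primes below $\sqrt{n}$: each such subset is an admissible prefix because $\sum_{p\le\sqrt{n}}p\sim n/\log n\le n$, and there are $2^{\pi(\sqrt{n})}$ of them, so Chebyshev's bound $\pi(\sqrt{n})\gtrsim \sqrt{n}/\log n$ gives $D(n)\ge 2^{c\sqrt{n}/\log n}$, which exceeds every polynomial and already rules out polynomial growth. For the upper bound one dominates $D(n)$ by the number of partitions of $n$ into distinct parts, whose Hardy--Ramanujan asymptotic yields $\log D(n)=O(\sqrt{n})$, so that $\tfrac1n\log D(n)\to 0$ and the growth is subexponential. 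A saddle-point (Meinardus-type) analysis incorporating the prime density $\pi(x)\sim x/\log x$ sharpens this to $\log D(n)\asymp\sqrt{n/\log n}$, placing the growth strictly between polynomial and exponential, and I would feed this sharper rate into \cref{def: (strongly) subexponential growth}.

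The main obstacle is not the number theory, which is standard, but the faithfulness of the reduction in the middle step: one must verify that passing to the Brix--Hume--Li cover $\hat{\G}_X$ distorts the count by at most a controlled multiplicative factor, that the chosen length function is genuinely locally bounded on the cover rather than merely on $X$, and that the combinatorial identification survives both directions in the groupoid relation. A secondary subtlety is confirming the precise form of \emph{strong} subexponential growth demanded by \cref{def: (strongly) subexponential growth}: the bare statement $\tfrac1n\log D(n)\to 0$ gives only ordinary subexponentiality, so it is the sharp rate $\log D(n)\asymp\sqrt{n/\log n}$, hence $\log D(n)=O(n^{1/2})$, that must be checked against that definition.
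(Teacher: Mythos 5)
Your proposal follows the paper's own route essentially step for step: reduce the growth of $\hat{\G}_X$ to counting admissible words in $X$ (the paper's \cref{lem: subexponential complexity function gives subexponential growth groupoid}, which bounds $|B_{(\G_X)_x}(n)|$ by $1+n+n^2p_X(n)$ via the generating set $S$ of \cref{eq: generating set for shift groupoids}), obtain the superpolynomial lower bound from subsets of the primes below $\sqrt{n}$ together with Chebyshev's estimate, obtain the upper bound $\log p_X(n)=O(\sqrt{n})$ from the Hardy--Ramanujan bound on the partition function, and transfer everything to the Brix--Hume--Li cover via the pulled-back length $\lf(g,x)=\lf_S(g)$, under which source fibers of $\hat{\G}_X=\G_X\ltimes\hat{X}$ identify length-preservingly with source fibers of $\G_X$. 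Your Meinardus-type sharpening $\log D(n)\asymp\sqrt{n/\log n}$ is unnecessary: the crude domination of $D(n)$ by unrestricted partitions already gives $\sup_u|B_{\G_u}(n)|\le C\exp(\alpha n^{1/2})$, i.e.\ strong subexponential growth with $\beta=1/2$, which is exactly what \cref{def: (strongly) subexponential growth} demands and exactly what the paper uses.

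One step as you state it is false, though easily repaired. The claim that for a \emph{fixed arbitrary} source $u$ the ball of radius $n$ in the fiber is comparable, up to controlled factors, to the number of partitions of integers at most $n$ into distinct primes holds only as an upper bound uniformly in $u$: which prefixes are admissible depends on the tail. If the tail of $u$ already contains a block $10^{p}1$, every prime gap occurring in a prepended word must be strictly smaller than $p$, so over a point whose tail begins $10^{2}10^{3}1\cdots$ the only admissible prefixes are blocks of zeros and the fiber ball grows polynomially. Consequently the superpolynomial lower bound cannot be read off from your two-sided comparison; it must be anchored at one well-chosen fiber. The paper does this in \cref{prop: groupoid associated to the prime shift is of subexponential growth} by taking $x=0^{\infty}$, for which every admissible word can be prepended, giving $|B_{(\G_X)_x}(n)|\ge p_X(n)$. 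Your subset-of-primes count is valid precisely at this fiber, so fixing $u=0^{\infty}$ (and noting the cover has a point over it with the same fiber) closes the gap; since growth is defined via $\sup_u$, nothing further is needed.
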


The paper is structured as follows. In \cref{sec: preliminaries}, we recall some definitions and results regarding étale groupoids and their associated reduced $L^p$-operator algebras. In \cref{sec: growth of étale groupoids} we introduce the notion of (strong) subexponential growth for étale groupoids and prove some properties such groupoids necessarily must possess.  \cref{sec: applications} covers the proof of our main result \cref{introthm: strong subexponential growth gives invariance in K-theory}, but we also prove a similar result for the symmetrized versions of the reduced groupoid $L^p$-operator algebras therein. %The final section is 
Lastly, in \cref{sec: examples} %, wherein 
we exhibit several examples of étale groupoids with strong subexponential growth, the most significant % important 
of which is the example of the ordered prime shift in \cref{subsec: shift groupoid}.

\section{Preliminaries} \label{sec: preliminaries}
We recall some basic terminology and results regarding étale groupoids and their reduced $L^p$-operator algebras. The reader is referred to \cite{ChoiGardellaThiel:RigidityResultsForLpOperatorAlgebrasAndApplications, Renault:AGroupoidApproach} for details.

Let $\G$ be a locally compact Hausdorff groupoid with unit space $\G^{(0)}$, composable pairs $\G^{(2)}$, and range and source maps $r,s \colon \G \to \G$ given by $r (x) = xx^{-1}$ and $s(x) = x^{-1}x$. 
The groupoid $\G$ is called \emph{étale} if the range and source maps are local homeomorphisms. 
For any $X \subset \G^{(0)} ,$ we denote by $\G_X = \{x \in \G \colon s(x) \in X\} $, $\G^{X} = \{x \in \G \colon r(x) \in X\}$, and $\G(X) = \G_X \cap \G^X$. We shall write $\G_u$ and $\G^{u}$ instead of $\G_{\{u\}}$ and $\G^{\{u\}}$, whenever $u \in \G^{(0)}$ is a unit. When $\G$ is étale, the fibers $\G_u$ and $\G^u$, for $u \in \G^{(0)}$, are discrete.
The \emph{isotropy group} at a unit $u \in \G^{(0)}$ is the group $\G(u) := \G_u \cap \G^{u}$, and the \emph{isotropy bundle} is the set $\mathrm{Iso}(\G) = \bigsqcup_{u \in \G^{(0)}} \G(u)$. If $\mathrm{Iso}(\G)^{\circ} = \G^{(0)}$, then $\G$ is said to be \emph{effective}. Given subsets $A,B \subset \G$, their \emph{product} is the set $AB = \{ ab \mid (a,b) \in \G^{(2)}, a \in A, b \in B \}$.

A \emph{length function} on a groupoid $\G$ is a map $\lf \colon \G \to \R_{\geq 0}$ such that $\lf(u) = 0$, for all $u \in \G^{(0)}$, $\lf(x^{-1}) = \lf(x)$, for all $x \in \G$, and $\lf(xy) \leq \lf(x) + \lf(y)$, whenever $(x,y) \in \G^{(2)}$. We say a length function $\lf$ is \emph{locally bounded} if it is bounded on compact sets. 
Two natural examples of length functions on étale groupoids are the following: First, suppose $\Gamma$ is a discrete group with a length function $\lf_\Gamma \colon \Gamma \to \R_{\geq 0}$, and suppose that $\Gamma$ acts on a locally compact space $X$. Then it is easy to see that $\lf (\gamma , x) = \lf_\Gamma (\gamma)$ is a locally bounded length function on the transformation groupoid $\Gamma \ltimes X$. Second, if $\G$ is an étale groupoid which is compactly generated in the sense that there is a compact set $S \subset \G$ such that $S = S^{-1}$ and $\G = \bigcup_{k = 1}^{\infty} S^k$, then the function $\lf_S$ given by $\lf_S (u) := 0$, for all units $u \in \G^{(0)}$, and for $x \in \G \setminus \G^{(0)}$, $ \lf_S (x) = \inf\{ k \mid x \in S^k \} $, is a length function on $\G$. $\lf_S$ is locally bounded if for example also $\G = \bigcup_{k = 1}^{\infty} (S^k)^\circ$.

Now let $C_c (\G)$ denote the space of compactly supported continuous functions on $\G$. We endow $C_c (\G)$ with the convolution product, which for $f,g \in C_c (\G)$ is given by $$ f \ast g (x) = \sum_{y \in \G_{s(x)}} f(x y^{-1}) g(y) = \sum_{y \in \G^{r(x)}} f(y) g(y^{-1}x) ,$$ for $x \in \G$, and involution given by $$ f^{\ast}(x) = \overline{f(x^{-1})} ,$$ for $f \in C_c (\G)$ and $x \in \G$. The \emph{$I$-norm} on $C_c (\G)$ is given by $$ \| f \|_I = \max \left\lbrace \sup_{u \in \G^{(0)}} \sum_{x \in \G_u} |f(x)| \, , \, \sup_{u \in \G^{(0)}} \sum_{x \in \G^{u}} |f(x)| \right\rbrace .$$ With the above norm and algebraic operations, $(C_c (\G), \ast, ^{\ast}, \| \cdot \|_{I})$ is a normed $^*$-algebra. 
Let $p \in [1, \infty)$ and fix any unit $u \in \G^{(0)}$. The operator $\lambda_u (f) \in B(\ell^p (\G_u))$ associated with $f \in C_c(\G)$, is the operator given by 
\begin{equation*}
	\lambda_u (f) (\xi) (x) = \sum_{y \in \G_u} f(x y^{-1}) \xi (y) ,
\end{equation*}
for $x \in \G_{u}$ and $\xi \in \ell^p (\G_u)$. The map $\lambda_u \colon C_c (\G) \to B(\ell^p (\G_u))$ is an $I$-norm contractive homomorphism of $C_c(\G)$, and is called the \emph{left regular representation} at $u$. The \emph{reduced groupoid $L^p $-operator algebra} associated with $\G$ is denoted $F_{\lambda}^p (\G)$ and is the completion of $C_c(\G)$ under the norm $$\lVert f \rVert_{\Fp} := \sup_{u \in \G^{(0)}} \lVert \lambda_{u}(f) \rVert .$$ 
By \cite[Lemma 4.5]{ChoiGardellaThiel:RigidityResultsForLpOperatorAlgebrasAndApplications} this norm satisfies the following, for any $f \in C_c (\G)$, $$ \lVert f \rVert_{\infty} \leq \lVert f \rVert_{\Fp} \leq \lVert f \rVert_I . $$ 
Since $\bigoplus_{u \in \G^{(0)}} \lambda_u$ is an isometric representation of $F_{\lambda}^p (\G)$ on the $L^p $-space $\bigoplus_{u \in \G^{(0)}} \ell^p (\G_u)$, $F_{\lambda}^p (\G)$ is an $L^p $-operator algebra. It is unital if and only if $\G^{(0)}$ is compact, in which case the indicator function of the unit space is the identity element.
The map $j_p \colon F_{\lambda}^{p}(\G) \to C_0 (\G)$ given by 
\begin{equation*}
	j_p (a) (x) = \lambda_{s(x)} (a) (\delta_{s(x)}) (x) ,
\end{equation*}
for $a \in F_{\lambda}^p (\G)$ and $x \in \G$, is contractive, linear, injective and extends the identity map on $C_c (\G)$. We shall refer to this map as \emph{Renault's p-j-map}. Given $a,b \in \Fp(\G)$ and $x \in \G$, we have that $j_p (a b) (x) = j_p (a) \ast j_p (b) (x)$, where the sum defining $j_p (a) \ast j_p (b) (x)$ is absolutely convergent (see \cite[Proposition 4.7 and Proposition 4.9]{ChoiGardellaThiel:RigidityResultsForLpOperatorAlgebrasAndApplications}).

\section{Growth of étale groupoids} \label{sec: growth of étale groupoids}

Let $\lf \colon \G \to [0,\infty)$ be a locally bounded length function on an étale groupoid $\G$. Given $t \geq 0$, we define $$ B_{\G_u}(t) := \{ x \in \G_u \mid \lf(x) \leq t \} ,$$ and define $B_{\G^u}(t)$ analogously. Since $\lf$ is symmetric, $|B_{\G_u}(t)| = |B_{\G^u}(t)|$, for every $t \geq 0$. 

\begin{definition} \label{def: (strongly) subexponential growth}
	Let $\G$ be an étale groupoid endowed with a locally bounded length function $\lf$. 
	We say that $\G$ has \emph{polynomial growth} with respect to $\lf$ if there exist constants $C > 0$ and $d \in \N$ such that $$ \sup_{u \in \G^{(0)}} |B_{\G_u} (t)| \leq C (1+t)^d ,$$ for all $t \geq 0$;
	\emph{strong subexponential growth} with respect to $\lf$ if there exist $\alpha > 0$, $0 < \beta < 1$ and a positive constant $C > 0$ such that $$ \sup_{u \in \G^{(0)}} |B_{\G_u} (t)| \leq C \exp(\alpha t^\beta) ,$$ for each $t \geq 0$;
	\emph{subexponential growth} with respect to $\lf$ if $$\limsup_{t \to \infty} \sup_{u \in \G^{(0)}} |B_{\G_u} (t)|^{1/t} = 1 .$$ 
	If $\G$ does not have subexponential growth with respect to $\lf$, then we say $\G$ has \emph{exponential growth} with respect to $\lf$. 
\end{definition}

Clearly we have the following relationship between the above definitions: 
$$ \text{polynomial growth} \implies \text{strong subexponential growth} \implies \text{subexponential growth} .$$ 
In \cite{AustadOrtegaPalmstrom:PolynomialGrowthAndPropertyRDpForEtaleGroupoids} the authors exhibited several examples of étale groupoids with polynomial growth, for example AF-groupoids, certain point set groupoids, coarse groupoids associated with uniformly locally finite metric spaces with polynomial growth, and Renault-Deaconu groupoids associated with certain finite directed graphs. To obtain an example of an étale groupoid with strong subexponential growth which does not have polynomial growth, one can simply take the transformation groupoid formed from any action of the Grigorchuck group on a locally compact space. Equipped with the natural length function described in \cref{sec: preliminaries}, the growth of the transformation groupoid is the same as that of the Grigorchuck group, which has strong subexponential growth (see \cite[Theorem B 1.]{Grigorchuk:DegreesOfGrowth}). Also, in \cref{subsec: shift groupoid}, we shall exhibit an example of an étale groupoid associated with a shift of infinite type that has strong subexponential growth, but not polynomial growth. 

Following \cite[Definition 5.4 and Proposition 5.5]{MaWuAlmostElementarinessAndFiberwiseAmenabilityForEtaleGroupoids} we say that an étale groupoid $\G$ is \emph{fiberwise amenable} if for any compact $K \subset \G$  and $\epsilon > 0$, there exists $F \subset \G$ finite such that $$ \left| K F \right| / \left| F \right| \leq 1 + \epsilon .$$ 
Fiberwise amenability was introduced by Ma and Wu in \cite{MaWuAlmostElementarinessAndFiberwiseAmenabilityForEtaleGroupoids}. For certain classes of groupoids, it is a strengthening of the notion of amenability for groupoids. Indeed, in the case of transformation groupoids, it is equivalent to the acting group being amenable, while in the case of coarse groupoids associated with metric spaces, it is equivalent to metric amenability of the underlying metric space. In general, however, there is no implication between the two notions. 

\begin{proposition} \label{prop: Subexponential groupoids are fiberwise amenable}
	If $\G$ is an étale groupoid with subexponential growth with respect to a locally bounded length function, then $\G$ is fiberwise amenable.
	\begin{proof}
		Let $\lf \colon \G \to \R_{\geq 0}$ be the length function, and let $K \subset \G$ be compact. Then $\lf(K) \subseteq [0,M]$ for some $M \in \N$. Fix any unit $u \in \G^{(0)}$ and let $\epsilon > 0$ be given; then we may find $k \in \N_0$ such that $$ \left| B_{\G_u} (k+M) \right| / \left| B_{\G_u} (k) \right| \leq 1 + \epsilon .$$ Indeed, if this were not the case, then for every $k \in \N_0$, we would have $$ |B_{\G_{u}}(k+M)| > (1+\epsilon) |B_{\G_{u}}(k)| .$$ In particular, by induction, $$ |B_{\G_{u}}(kM) | > (1+\epsilon)^{k} , $$ for every $k \in \N$, from which we get the contradiction that $$ \limsup_{k \to \infty} \sup_{u \in \G^{(0)}} |B_{\G_{u}}(k)|^{1/k} \geq \limsup_{k \to \infty} |B_{\G_{u}}(k)|^{1/k} > 1 .$$ Also, since $\G$ has subexponential growth, $\left| B_{\G_u} (k) \right| < \infty$, and combining this with the observation $$ \left| K B_{\G_u} (k) \right| / \left| B_{\G_u} (k) \right| \leq \left| B_{\G_u} (k+M) \right| / \left| B_{\G_u} (k) \right| \leq 1 + \epsilon ,$$ we see that $\G$ is indeed fiberwise amenable.
	\end{proof}
\end{proposition}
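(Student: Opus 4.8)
\section*{Proof proposal}

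The plan is to use the metric balls $B_{\G_u}(k)$ themselves as the Følner-type finite sets witnessing fiberwise amenability. Fix a compact set $K \subseteq \G$ and an $\epsilon > 0$. Since the length function $\lf$ is locally bounded and $K$ is compact, I would first record that $\lf(K) \subseteq [0,M]$ for some $M \in \N$. I would also note that subexponential growth forces every ball to be finite: the balls are nested in $t$, so an infinite ball $B_{\G_u}(t_0)$ would make $|B_{\G_u}(t)|^{1/t} = \infty$ for all $t \geq t_0$ and hence the $\limsup$ infinite rather than $1$.

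Next I would establish the key containment $K\,B_{\G_u}(k) \subseteq B_{\G_u}(k+M)$ for every unit $u \in \G^{(0)}$ and every $k$. Indeed, if $x \in K$ and $y \in B_{\G_u}(k)$ are composable, then $s(xy) = s(y) = u$, so $xy \in \G_u$, while the triangle inequality for $\lf$ gives $\lf(xy) \leq \lf(x) + \lf(y) \leq M + k$. This reduces the problem of controlling $|K\,B_{\G_u}(k)|/|B_{\G_u}(k)|$ to controlling the ratio of successive balls $|B_{\G_u}(k+M)|/|B_{\G_u}(k)|$.

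The heart of the argument, and the step I expect to be the main obstacle, is showing that subexponential growth forces $|B_{\G_u}(k+M)|/|B_{\G_u}(k)| \leq 1+\epsilon$ for at least one $k \in \N_0$. I would argue by contradiction: if this ratio exceeded $1+\epsilon$ for every $k$, then restricting to the arithmetic progression $0, M, 2M, \dots$ and telescoping (using $|B_{\G_u}(0)| \geq 1$, since $u \in B_{\G_u}(0)$) would yield $|B_{\G_u}(kM)| > (1+\epsilon)^{k}$ for all $k \in \N$. Taking $(kM)$-th roots along this subsequence gives $|B_{\G_u}(kM)|^{1/(kM)} > (1+\epsilon)^{1/M}$, whence $\limsup_{t \to \infty} \sup_{v \in \G^{(0)}} |B_{\G_v}(t)|^{1/t} \geq (1+\epsilon)^{1/M} > 1$, contradicting subexponential growth.

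Finally I would assemble the pieces. Choosing such a $k$ and setting $F = B_{\G_u}(k)$, which is finite and nonempty, the containment from the second step gives
$$ |KF|/|F| \leq |B_{\G_u}(k+M)| / |B_{\G_u}(k)| \leq 1+\epsilon , $$
and since $F$ is a finite subset of $\G$ this exhibits exactly the data required by the definition of fiberwise amenability.
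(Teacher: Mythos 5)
Your proposal is correct and follows essentially the same route as the paper's proof: the same bound $\lf(K)\subseteq[0,M]$, the same contradiction argument producing a $k$ with $\lvert B_{\G_u}(k+M)\rvert/\lvert B_{\G_u}(k)\rvert\leq 1+\epsilon$ via telescoping along multiples of $M$, and the same choice of Følner set $F=B_{\G_u}(k)$. If anything, you make explicit two points the paper leaves implicit, namely the containment $K\,B_{\G_u}(k)\subseteq B_{\G_u}(k+M)$ and the finiteness of the balls.
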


Fiberwise amenability has connections with soficity of the topological full groups (see \cite[Section 7]{Ma:FiberwiseAmenabilityOfAmpleGroupoids}), and, more importantly to us, for $\sigma$-compact groupoids with compact unit space, it implies the existence of invariant probability measures on the unit space (see \cite[Proposition 5.9]{MaWuAlmostElementarinessAndFiberwiseAmenabilityForEtaleGroupoids}).

\begin{example} \label{ex: a class of examples of exponential growth groupoids}
	Suppose $\G$ is a locally compact, $\sigma$-compact, Hausdorff, étale, minimal and effective groupoid with compact unit space. Then $C^*_r(\G)$ is a simple and unital $C^*$-algebra. If $\G$ has subexponential growth, then by \cref{prop: Subexponential groupoids are fiberwise amenable}, $\G$ is fiberwise amenable, and hence by \cite[Proposition 5.9]{MaWuAlmostElementarinessAndFiberwiseAmenabilityForEtaleGroupoids} there exists a $\G$-invariant Borel probability measure $\mu$ on $\G^{(0)}$. Then $\tau:=\mu \circ E$, where $E:C^*_r(\G)\to C(\G^{(0)})$ is the canonical conditional expectation, is a faithful trace. If $\Cred(\G)$ is purely infinite, however, then it has no faithful trace. In particular, a $\sigma$-compact étale groupoid $\G$ which is minimal, effective, with compact unit space and for which $\Cred(\G)$ is purely infinite must have exponential growth with respect to any locally bounded length function. 
\end{example}

\section{Applications to k-theory of groupoid \( L^p \)-operator algebras} \label{sec: applications}
Our aim for this section is to show that the K-groups $K_\ast (F_{\lambda}^{p}(\G))$ are independent of $p \in [1, \infty)$, for $\ast = 0,1$, whenever $\G$ is an étale groupoid which has strong subexponential growth with respect to a locally bounded length function.

Throughout, $\G$ will always denote a fixed étale groupoid endowed with a fixed locally bounded length function $\lf \colon \G \to \R_{\geq 0}$ and we fix some $p \in [1, \infty)$.
Given $\alpha, \alpha^\prime > 0$ and $\beta, \beta^\prime \in (0,1)$, let us say $(\alpha, \beta) \geq (\alpha^\prime, \beta^\prime)$ if $\alpha \geq \alpha^\prime$ and $\beta \geq \beta^\prime$. For ease of notation, whenever we write a pair $(\alpha , \beta)$ we shall always implicitly assume that $\alpha > 0$ and $\beta \in (0,1)$.
Notice that if the condition in the definition of strong subexponential growth is satisfied for some pair $(\alpha^\prime, \beta^\prime)$, then it is satisfied for all $(\alpha, \beta) \geq (\alpha^\prime, \beta^\prime)$.

For each pair $(\alpha, \beta)$, let $\omega_{\alpha, \beta} \colon \G \to \R_{\geq 0}$ be the function given by $ \omega_{\alpha, \beta}(x) := \exp(\alpha \lf(x)^\beta)$. Then $\omega_{\alpha, \beta}$ is a submultiplicative weight on $\G$, meaning that $\omega_{\alpha, \beta}(u) = 1$ for each $u \in \G^{(0)}$, $\omega_{\alpha, \beta}(x^{-1}) = \omega_{\alpha, \beta}(x)$, and $\omega_{\alpha, \beta}(xy) \leq \omega_{\alpha, \beta}(x)\omega_{\alpha, \beta}(y)$, whenever $(x,y) \in \G^{(2)}$. 

For any complex-valued function $f$ on $\G$ and $q \in [1, \infty)$ we define $$ \| f \|_q := \max \left\lbrace \sup_{u \in \G^{(0)}} \left\lbrace \sum_{x \in \G_u} |f(x)|^q \right\rbrace^{1/q} , \sup_{u \in \G^{(0)}} \left\lbrace \sum_{x \in \G^u} |f(x)|^q \right\rbrace^{1/q} \right\rbrace ,$$ while as usual, $\| f \|_\infty = \sup_{x \in \G} |f(x)|$. We let $ \ell^q (\G) := \{ f \colon \G \to \C \mid \| f \|_q < \infty \} $, for $q \in [1, \infty]$. The next lemma will be used frequently throughout.

\begin{lemma} \label{lem: subexponential growth gives norm estimate of exponential}
	If $\G$ has strong subexponential growth with respect to $\lf$ with %and pair of 
	constants $(\alpha_0 , \beta_0)$ and $C>0$ as in \cref{def: (strongly) subexponential growth}, then for every $(\alpha , \beta) \geq (\alpha_0 , \beta_0)$ with $\alpha > \alpha_0$, one has that $$ \sup_{u \in \G^{(0)}} \sum_{x \in \G_u} \exp(-\alpha \lf(x)^\beta) < \infty .$$
	\begin{proof}
		Fix any $u \in \G^{(0)}$. We compute
		\begin{align*}
			\sum_{x \in \G_u} \exp(-\alpha \lf(x)^\beta) &= \sum_{k = 0}^{\infty} \sum_{\substack{x \in \G_u \\ k \leq \lf(x) \leq k+1}} \exp(-\alpha \lf(x)^\beta) \\
			&\leq \sum_{k = 0}^{\infty} |B_{\G_u}(k+1)| \exp(-\alpha k^\beta) \\
			&\leq \sum_{k = 0}^{\infty} C \exp(\alpha_0 (k+1)^{\beta_0}) \exp(-\alpha k^\beta) \\
			&\leq \exp(\alpha_0) C \sum_{k = 0}^{\infty} \exp(-(\alpha - \alpha_0) k^\beta) < \infty ,
		\end{align*}
		independently of $u \in \G^{(0)}$, since $\alpha > \alpha_0 $. Note that in the last transition we used that $(k+1)^{\beta_0} \leq k^{\beta_0} + 1$ as $0 < \beta_0 < 1$. The result follows.
	\end{proof}
\end{lemma}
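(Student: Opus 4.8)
The plan is to estimate the sum by grouping the elements of the fiber $\G_u$ into dyadic-like shells according to the value of the length function, then bound the number of elements in each shell using the strong subexponential growth hypothesis, and finally recognize the resulting series as a convergent one. The key point is that the growth bound $|B_{\G_u}(t)| \leq C\exp(\alpha_0 t^{\beta_0})$ competes against the decaying weight $\exp(-\alpha \lf(x)^\beta)$, and since we assume $\alpha > \alpha_0$ (and $\beta \geq \beta_0$), the decay wins.

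Concretely, I would first partition the fiber as $\G_u = \bigsqcup_{k=0}^\infty \{x \in \G_u \mid k \leq \lf(x) < k+1\}$, which is valid since $\lf$ takes values in $\R_{\geq 0}$. On the shell where $k \leq \lf(x)$, the weight satisfies $\exp(-\alpha \lf(x)^\beta) \leq \exp(-\alpha k^\beta)$, so each term is dominated uniformly across the shell. Next I would bound the number of elements in the $k$-th shell by the ball count $|B_{\G_u}(k+1)|$, and apply the strong subexponential growth estimate to get $|B_{\G_u}(k+1)| \leq C\exp(\alpha_0(k+1)^{\beta_0})$. Combining these two observations yields a pointwise bound that is independent of $u$, reducing the problem to the convergence of a single numerical series.

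The one technical wrinkle, which I expect to be the main (if mild) obstacle, is that the exponent on the ball-count side uses $(k+1)^{\beta_0}$ while the weight uses $k^\beta$, so to compare them cleanly I would invoke the elementary inequality $(k+1)^{\beta_0} \leq k^{\beta_0} + 1$ valid for $0 < \beta_0 < 1$ (by concavity, or simply since $t \mapsto t^{\beta_0}$ is subadditive). This lets me factor out a harmless constant $\exp(\alpha_0)$ and arrive at a series whose general term is at most $C\exp(\alpha_0)\exp(\alpha_0 k^{\beta_0} - \alpha k^\beta)$. Since $\beta \geq \beta_0$ forces $k^{\beta_0} \leq k^\beta$ for $k \geq 1$, and $\alpha > \alpha_0$, the exponent is dominated by $-(\alpha - \alpha_0)k^\beta$, giving a summable series. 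All bounds are uniform in $u$, so taking the supremum over $u \in \G^{(0)}$ preserves finiteness, which is exactly the claim.
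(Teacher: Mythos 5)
Your proposal is correct and follows essentially the same argument as the paper's proof: the same shell decomposition of $\G_u$ by integer values of $\lf$, the same bound $|B_{\G_u}(k+1)| \leq C\exp(\alpha_0(k+1)^{\beta_0})$, and the same use of $(k+1)^{\beta_0} \leq k^{\beta_0}+1$ to reduce to the convergent series $\sum_k \exp(-(\alpha-\alpha_0)k^\beta)$. Indeed, you even make explicit the step $k^{\beta_0} \leq k^{\beta}$ (from $\beta \geq \beta_0$) that the paper leaves implicit in its final inequality.
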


Given $\alpha > 0$ and $\beta \in (0,1)$, let $\| \cdot \|_{\alpha , \beta}$ denote the norm on $C_c (\G)$ given by $$ \| f \|_{\alpha, \beta} := \| f \omega_{\alpha , \beta} \|_p ,$$ and let $\Sab(\G) = \overline{C_c (\G)}^{\| \cdot \|_{\alpha, \beta}}$ be the Banach space obtained by completing $C_c (\G)$ in this norm. 

Since we have the norm inequality $\| \cdot \|_{\infty} \leq \| \cdot \|_{\alpha, \beta}$ on $C_c (\G)$, it follows by a similar argument to \cite[Lemma 3.4]{AustadOrtegaPalmstrom:PolynomialGrowthAndPropertyRDpForEtaleGroupoids} that $\Sab(\G) \subset C_0 (\G)$.

\begin{proposition} \label{prop: norm inequality for reduced Lp norm and application to inverse of Renaults j-p-map}
	If $\G$ has strong subexponential growth with respect to $\lf$ and pair of constants $(\alpha_0 , \beta_0)$, then for each pair $(\alpha , \beta) \geq (\alpha_0 , \beta_0)$ with $\alpha > \alpha_0$, there exists a constant $K_{\alpha, \beta} > 0$ such that $$ \| f \|_{\Fp} \leq K_{\alpha, \beta} \| f \|_{\alpha, \beta} ,$$ for all $f \in C_c (\G)$. In particular, the inverse of Renault's p-j-map gives a linear, injective and continuous map $j_{p}^{-1} \colon \Sab (\G) \to \Fp (\G)$. 
	\begin{proof}
		The first statement follows from the fact that $$\| f \|_{\Fp} \leq \| f \|_I \leq K_{\alpha , \beta} \| f \|_{\alpha , \beta} ,$$ for every $f \in C_c (\G)$, by an application of Hölder's inequality together with \cref{lem: subexponential growth gives norm estimate of exponential}. To see the second statement, let $\iota \colon C_c (\G) \to \Fp (\G)$ be the usual inclusion. Having proved the first statement here, it follows that the inclusion extends to a bounded linear map $\iota_{\alpha , \beta} \colon \Sab (\G) \to \Fp (\G)$. If now $f \in \Sab (\G)$, find $f_n \to f$ in $\Sab$, where $f_n \in C_c (\G)$. Then $ \iota_{\alpha, \beta}(f_n) \to \iota_{\alpha, \beta}(f)$ in $\Fp (\G)$, so that $$j_p (\iota_{\alpha, \beta} (f)) = \lim_n j_p (\iota_{\alpha, \beta} (f_n)) = \lim_n f_n = f.$$ It follows that $\iota_{\alpha, \beta} (f) = j_{p}^{-1}(f)$.
	\end{proof}
\end{proposition}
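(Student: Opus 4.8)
The plan is to prove the norm estimate first and then obtain the statement about $j_p^{-1}$ as a formal consequence. For the estimate, I would not attempt to control $\| f \|_{\Fp}$ directly; instead I would use the inequality $\| f \|_{\Fp} \leq \| f \|_I$ recorded in \cref{sec: preliminaries} and bound the $I$-norm. Since $\| f \|_I$ is the maximum of the two suprema $\sup_u \sum_{x \in \G_u} |f(x)|$ and $\sup_u \sum_{x \in \G^u} |f(x)|$, it suffices to bound each $\ell^1$-sum over a fibre $\G_u$; the $\G^u$ case is identical because $\lf$, and hence $\omega_{\alpha,\beta}$, is symmetric and $|B_{\G_u}(t)| = |B_{\G^u}(t)|$.

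The central device is to insert the weight and split via Hölder's inequality. Writing $|f(x)| = |f(x)|\,\omega_{\alpha,\beta}(x)\cdot\omega_{\alpha,\beta}(x)^{-1}$ and letting $q$ be the conjugate exponent of $p$, one obtains
$$ \sum_{x \in \G_u} |f(x)| \leq \Big( \sum_{x \in \G_u} |f(x)\,\omega_{\alpha,\beta}(x)|^p \Big)^{1/p} \Big( \sum_{x \in \G_u} \exp(-q\alpha \lf(x)^\beta) \Big)^{1/q} . $$
The first factor is bounded by $\| f \|_{\alpha,\beta}$. For $p \in (1,\infty)$ the second factor is finite and bounded uniformly in $u$ by \cref{lem: subexponential growth gives norm estimate of exponential} applied to the pair $(q\alpha,\beta)$; here the hypothesis $\alpha > \alpha_0$ is exactly what guarantees $q\alpha > \alpha_0$ (since $q \geq 1$), so that the lemma applies and yields a finite constant $M_q := \sup_u \sum_{x \in \G_u} \exp(-q\alpha \lf(x)^\beta)$. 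Taking $K_{\alpha,\beta} := M_q^{1/q}$ then gives $\| f \|_I \leq K_{\alpha,\beta}\| f \|_{\alpha,\beta}$. The endpoint $p = 1$ (where $q = \infty$) I would handle separately and even more simply: since $\omega_{\alpha,\beta} \geq 1$ one has $\sum_{x \in \G_u}|f(x)| \leq \sum_{x \in \G_u}|f(x)|\,\omega_{\alpha,\beta}(x) \leq \| f \|_{\alpha,\beta}$, so one may take $K_{\alpha,\beta}=1$. Including $p=1$ in this way, without the duality arguments of the polynomial-growth paper, is the point worth emphasizing.

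For the \emph{in particular} statement, the norm estimate says precisely that the identity inclusion $\iota \colon C_c(\G) \to \Fp(\G)$ is bounded when $C_c(\G)$ carries $\| \cdot \|_{\alpha,\beta}$; since $\Sab(\G)$ is by definition the completion of $C_c(\G)$ in that norm, $\iota$ extends to a bounded linear map $\iota_{\alpha,\beta} \colon \Sab(\G) \to \Fp(\G)$. I would then identify $\iota_{\alpha,\beta}$ with $j_p^{-1}$ by a density argument: for $f \in \Sab(\G)$ choose $f_n \in C_c(\G)$ with $f_n \to f$ in $\| \cdot \|_{\alpha,\beta}$; then $\iota_{\alpha,\beta}(f_n) \to \iota_{\alpha,\beta}(f)$ in $\Fp(\G)$, and applying the contractive map $j_p$ together with the fact that $j_p$ restricts to the identity on $C_c(\G)$ gives $j_p(\iota_{\alpha,\beta}(f)) = \lim_n j_p(\iota_{\alpha,\beta}(f_n)) = \lim_n f_n = f$, the final limit being taken in $C_0(\G)$ via $\| \cdot \|_\infty \leq \| \cdot \|_{\alpha,\beta}$. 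Thus $\iota_{\alpha,\beta}$ is a continuous linear right inverse to $j_p$ on $\Sab(\G)$, so $\Sab(\G)$ lies in the range of $j_p$ and $\iota_{\alpha,\beta} = j_p^{-1}$ there; injectivity of $\iota_{\alpha,\beta}$ is immediate since $j_p \circ \iota_{\alpha,\beta} = \mathrm{id}$.

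The main obstacle is not any single hard estimate but keeping the bookkeeping exact: one must check that the conjugate pair $(q\alpha,\beta)$ still satisfies the hypotheses of \cref{lem: subexponential growth gives norm estimate of exponential}, in particular the strict inequality $q\alpha > \alpha_0$, which is the reason the proposition assumes $\alpha > \alpha_0$ rather than merely $\alpha \geq \alpha_0$; and one must treat the endpoint $p=1$ by hand, since Hölder's inequality degenerates there. Everything else is a routine combination of the recorded inequality $\| f \|_{\Fp} \leq \| f \|_I$, Hölder, and the continuity of Renault's $p$-$j$-map.
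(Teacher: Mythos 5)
Your proof is correct and follows essentially the same route as the paper: bounding $\| f \|_{\Fp}$ by $\| f \|_I$, then applying H\"older's inequality with the weight $\omega_{\alpha,\beta}$ and invoking \cref{lem: subexponential growth gives norm estimate of exponential} for the pair $(q\alpha,\beta)$, followed by the identical density argument identifying the extension $\iota_{\alpha,\beta}$ with $j_p^{-1}$. You merely make explicit what the paper leaves implicit, namely the exact H\"older split, the verification that $q\alpha > \alpha_0$, and the degenerate endpoint $p=1$ (where $\omega_{\alpha,\beta}\geq 1$ gives $K_{\alpha,\beta}=1$ directly), all of which check out.
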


\begin{remark} \label{rmk: may assume the length function is integer valued}
	If $\lf$ is any locally bounded length function with respect to which $\G$ has (strong) subexponential growth, then one can obtain a locally bounded integer-valued length function with respect to which $\G$ also has (strong) subexponential growth; simply define $\tilde{\lf}(x) = \ceil(\lf(x))$, where for $t \geq 0$, $\ceil(t)$ is the smallest integer larger than or equal to $t$. Then clearly $\lf \leq \tilde{\lf} \leq 1 + \lf$, and from this it follows that $\G$ has (strong) subexponential growth with respect to $\tilde{\lf}$, and moreover that the norms $\| \cdot \|_{\alpha, \beta}$ defined in terms of $\lf$ are equivalent to the ones defined in terms of $\tilde{\lf}$. 
\end{remark}

Let us for the remainder of this section assume $\G$ has strong subexponential growth with respect to $\lf$, with constants $C > 0$ and $(\alpha_0 , \beta_0)$ as in \cref{def: (strongly) subexponential growth}. For our purposes, we may assume by \cref{rmk: may assume the length function is integer valued} that $\lf$ takes integer values. We also fix a pair $(\alpha, \beta) \geq (\alpha_0 , \beta_0)$ with $\alpha > \alpha_0$, and consider the associated Banach space $\Sab(\G)$. For ease of notation, we write $\omega$ instead of $\omega_{\alpha, \beta}$ for the associated weight. Define two auxiliary functions $u$ and $\sigma$ as follows: 
$$ u(x) := \exp(- \alpha (2-2^{\beta}) \lf(x)^{\beta}) ,$$ for $x \in \G$, and $\sigma := \omega u$. The following inequality is essential to the arguments in this section, and can be proved exactly like in \cite[Theorem 2.2]{OztopSameiShepelska:TwistedOrliczAlgebrasAndCompleteIsomorphismToOperatorAlgebras}:
\begin{equation} \label{eq: important equation involving weights and auxilliary function}
	\frac{\omega(xy)}{\omega(x) \omega(y)} \leq u(x) + u(y) ,
\end{equation}
for all $(x,y) \in \G^{(2)}$.

We also need the following Young's convolution inequality for étale groupoids. We omit its proof because it is completely analogous to that in the setting of discrete groups.

\begin{lemma} \label{lem: Young's convolution inequality for groupoids}
	Let $p,q,r \geq 1$ be such that $1 + 1/r = 1/p + 1/q$. If $f \in \ell^p (\G)$ and $g \in \ell^q (\G)$, then $f \ast g \in \ell^r (\G)$ and $$ \| f \ast g \|_r \leq \| f \|_p \| g \|_q .$$ 
\end{lemma}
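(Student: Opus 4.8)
The plan is to transcribe the classical proof of Young's inequality, which rests on a single application of the generalized (three-factor) Hölder inequality carried out fiber by fiber, and to check that the only genuinely groupoid-specific ingredient---reindexing the convolution sums along the fibers $\G_u$ and $\G^u$---goes through. Since $\| \cdot \|_r$ is the maximum of a supremum over source fibers and a supremum over range fibers, I would first treat the source-fiber quantity $\sup_{u}\left( \sum_{x \in \G_u} |f \ast g(x)|^r \right)^{1/r}$; the range-fiber quantity is handled identically after replacing the convolution formula $f \ast g(x) = \sum_{y \in \G_{s(x)}} f(xy^{-1})g(y)$ by the equivalent one $f \ast g(x) = \sum_{y \in \G^{r(x)}} f(y) g(y^{-1}x)$, so no new idea is required there.

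Fix $u \in \G^{(0)}$ and $x \in \G_u$. The hypothesis $1 + 1/r = 1/p + 1/q$ forces $p \leq r$ and $q \leq r$, so the exponents $s, t \in [1, \infty]$ defined by $1/s = 1/p - 1/r$ and $1/t = 1/q - 1/r$ are admissible and satisfy $1/r + 1/s + 1/t = 1$. The key step is to write, for $y \in \G_{s(x)} = \G_u$,
$$ |f(xy^{-1})\, g(y)| = \left( |f(xy^{-1})|^p |g(y)|^q \right)^{1/r} \, |f(xy^{-1})|^{1 - p/r} \, |g(y)|^{1 - q/r} ,$$
and to apply the three-factor Hölder inequality with exponents $r, s, t$. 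The two outer factors are controlled by the reindexing $y \mapsto xy^{-1}$, which is a bijection $\G_{s(x)} \to \G^{r(x)}$: it gives $\sum_{y \in \G_u} |f(xy^{-1})|^p = \sum_{z \in \G^{r(x)}} |f(z)|^p \leq \|f\|_p^p$, while trivially $\sum_{y \in \G_u} |g(y)|^q \leq \|g\|_q^q$. Raising the surviving factor to the $r$-th power then yields
$$ |f \ast g(x)|^r \leq \|f\|_p^{\,r - p} \, \|g\|_q^{\,r - q} \sum_{y \in \G_u} |f(xy^{-1})|^p |g(y)|^q .$$

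It remains to sum over $x \in \G_u$, interchange the order of summation, and apply a second reindexing $x \mapsto xy^{-1}$, which for fixed $y \in \G_u$ is a bijection $\G_u \to \G_{r(y)}$; this gives $\sum_{x \in \G_u} |f(xy^{-1})|^p \leq \|f\|_p^p$ and hence $\sum_{x \in \G_u} |f \ast g(x)|^r \leq \|f\|_p^r \|g\|_q^r$. Taking $r$-th roots, passing to the supremum over $u$, and combining with the symmetric range-fiber estimate delivers $\|f \ast g\|_r \leq \|f\|_p \|g\|_q$; membership $f \ast g \in \ell^r(\G)$ follows from the same bound. I expect the main obstacle to be purely bookkeeping: verifying that each convolution or reindexing sum genuinely ranges over a single source or range fiber, so that the defining suprema of $\| \cdot \|_p$ and $\| \cdot \|_q$ apply, and separately disposing of the degenerate endpoints $q = 1$ (where $s = \infty$) and $p = 1$ (where $t = \infty$), in which one of the three Hölder factors degenerates into a supremum and the estimate reduces to the elementary bound $\|f \ast g\|_r \leq \|f\|_p \|g\|_1$ or its mirror.
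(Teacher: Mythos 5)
Your proof is correct, and it is precisely the argument the paper has in mind: the authors omit the proof with the remark that it is ``completely analogous to that in the setting of discrete groups,'' and your fiberwise transcription of the classical three-factor H\"older argument is exactly that analogy carried out. In particular, you correctly identify the one point where the groupoid setting matters --- the reindexings $y \mapsto xy^{-1}$ and $x \mapsto xy^{-1}$ land in range fibers and source fibers respectively, which is covered because $\| \cdot \|_p$ is defined as the maximum over both --- and your bijectivity checks and endpoint remarks are all sound.
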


\begin{lemma} \label{lem: inequality following from Young's}
	If $f, g \in \Sab(\G)$, then $$ \| f \ast g \|_{\alpha, \beta} \leq \| f \|_{\alpha, \beta} \| g \sigma \|_1 + \| f \sigma \|_1 \|g \|_{\alpha, \beta} .$$
	\begin{proof}
		It follows by \cref{eq: important equation involving weights and auxilliary function} that $$ | f \ast g (x) \omega(x) | \leq |f| \omega \ast |g | \sigma (x) + |f | \sigma \ast |g | \omega (x) ,$$ for each $x \in \G$, and therefore $$ \| f \ast g \|_{\alpha, \beta} \leq \| |f| \omega \ast |g | \sigma \|_p + \| |f | \sigma \ast |g | \omega \|_p \leq \| f \|_{\alpha, \beta} \| g \sigma \|_1 + \| f \sigma \|_1 \| g \|_{\alpha, \beta} ,$$ by \cref{lem: Young's convolution inequality for groupoids}.
	\end{proof}
\end{lemma}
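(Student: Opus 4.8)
The plan is to reduce the asserted estimate to a single pointwise inequality for the weighted convolution and then to invoke the groupoid Young inequality. Writing the convolution as $f \ast g(x) = \sum_{y \in \G_{s(x)}} f(xy^{-1}) g(y)$, I observe that each summand corresponds to the factorization $x = (xy^{-1})y$ with $(xy^{-1}, y) \in \G^{(2)}$. The crucial device is the weight factorization \cref{eq: important equation involving weights and auxilliary function}: applied to this factorization it gives $\omega(x) \leq \omega(xy^{-1})\omega(y)\big( u(xy^{-1}) + u(y) \big)$. Multiplying by $|f(xy^{-1})|\,|g(y)|$ and summing over $y \in \G_{s(x)}$ then bounds $|f \ast g(x)|\,\omega(x)$ by a sum of two nonnegative terms.

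Second, I would recognise each of these terms as an ordinary convolution once the weights are distributed. Since $\sigma = \omega u$, the $u(xy^{-1})$ contribution is exactly $(|f|\sigma)\ast(|g|\omega)(x)$, while the $u(y)$ contribution is $(|f|\omega)\ast(|g|\sigma)(x)$, yielding the pointwise bound $|f \ast g(x)|\,\omega(x) \leq (|f|\omega)\ast(|g|\sigma)(x) + (|f|\sigma)\ast(|g|\omega)(x)$. Taking $\|\cdot\|_p$ of both sides, using monotonicity together with the triangle inequality for $\|\cdot\|_p$, reduces the problem to bounding $\||f|\omega \ast |g|\sigma\|_p$ and $\||f|\sigma \ast |g|\omega\|_p$. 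Each is the $\ell^p$-norm of a convolution of an $\ell^p$ function with an $\ell^1$ function, so \cref{lem: Young's convolution inequality for groupoids} applied with the exponent triple $(p,1,p)$ (valid since $1 + 1/p = 1/p + 1/1$) gives $\||f|\omega \ast |g|\sigma\|_p \leq \|f\|_{\alpha,\beta}\|g\sigma\|_1$ and $\||f|\sigma \ast |g|\omega\|_p \leq \|f\sigma\|_1\|g\|_{\alpha,\beta}$, using $\||f|\omega\|_p = \|f\|_{\alpha,\beta}$ and $\||g|\omega\|_p = \|g\|_{\alpha,\beta}$. Summing the two estimates is exactly the claim.

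The step I expect to require the most care is not the algebra but the bookkeeping of convergence: one must ensure that the sum defining $f \ast g(x)$ and the two auxiliary convolutions are absolutely convergent so that the pointwise manipulation is legitimate. I would first carry out the argument for $f, g \in C_c(\G)$, where every sum is finite, and then pass to general $f, g \in \Sab(\G)$ by density together with continuity of the norms involved; alternatively, when $\|f\sigma\|_1$ and $\|g\sigma\|_1$ are finite the absolute convergence is immediate, and this finiteness is precisely where strong subexponential growth enters, via \cref{lem: subexponential growth gives norm estimate of exponential}. The one further point to watch is that $\|\cdot\|_p$ is defined as a maximum over both the source and range fibrations, so the triangle inequality and Young's inequality must be used in the form that respects this maximum; since \cref{lem: Young's convolution inequality for groupoids} is stated for exactly this $\|\cdot\|_p$, no extra work is needed there.
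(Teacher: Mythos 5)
Your argument is correct and follows the paper's proof exactly: the pointwise bound $|f \ast g(x)|\,\omega(x) \leq (|f|\omega)\ast(|g|\sigma)(x) + (|f|\sigma)\ast(|g|\omega)(x)$ obtained from \cref{eq: important equation involving weights and auxilliary function}, followed by \cref{lem: Young's convolution inequality for groupoids} with exponents $(p,1,p)$, is precisely the paper's route. Your added care about absolute convergence (noting the inequality is vacuous when $\|f\sigma\|_1$ or $\|g\sigma\|_1$ is infinite, and otherwise everything converges) is a harmless refinement the paper leaves implicit.
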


\begin{lemma} \label{lem: interpolation inequality}
	Suppose $(\alpha , \beta) \geq (\alpha_0 , \beta_0)$ is such that $\alpha (2 - 2^\beta) > \alpha_0$. Then there exists $\theta = \theta(\alpha, \beta) \in (0,1)$ and a constant $C_{\alpha, \beta} > 0$, such that $$ \| f \ast f\|_{\alpha, \beta} \leq C_{\alpha, \beta} \| f \|_{\alpha, \beta}^{1+ \theta} \| f \|_{\Fp}^{1-\theta} ,$$ for all $f \in \Sab(\G)$. 
	\begin{proof}
		%We first make the following observation: 
		For $\theta \in (0,1)$, let $$\xi_\theta (x) := \omega^{1-\theta}(x) u(x) = \exp(\alpha(1- \theta) \lf(x)^{\beta} - \alpha (2 - 2^\beta) \lf(x)^{\beta}) =\exp( \lf(x)^{\beta} \alpha ( 1- \theta - 2 + 2^\beta ) ) .$$ %This may be rewritten to $\xi_\theta (x) = \exp( \lf(x)^{\beta} \alpha ( 1- \theta - 2 + 2^\beta ) )$. 
		Now, as $2 > 2^{\beta}$ since $\beta \in (0,1)$ and by assumption $\alpha (2 - 2^\beta) > \alpha_0$, there exists $\theta \in (0,1)$ such that $2 + \theta > 2^\beta + 1$ and $\alpha (\theta - 1 + 2 - 2^\beta) > \alpha_0$. For such a $\theta$, we have that $\| \xi_\theta \|_s < \infty$ for all $s \in [1,\infty]$, by \cref{lem: subexponential growth gives norm estimate of exponential}. 
		
		Now let $f \in \Sab(\G)$. By \cref{lem: inequality following from Young's} with $f = g$, we have that $$ \| f \ast f \|_{\alpha, \beta} \leq 2 \| f \sigma \|_1 \| f \|_{\alpha, \beta} .$$ 
		
		Let $u \in \G^{(0)}$ be any unit. Then,
		\begin{align*}
			\| f \sigma \|_{\ell^1 (\G_u)} &= \sum_{x \in \G_u} |f(x)| \sigma (x) \\
			&= \sum_{x \in \G_u} |f(x)| \omega(x) u(x) \\
			&= \sum_{x \in \G_u} |f(x)|^{1- \theta} (f(x) \omega(x))^{\theta} \xi_\theta \\
			&\leq \left\lbrace \sum_{x \in \G_u} |f(x)|^p \right\rbrace^{\frac{1-\theta}{p}} \left\lbrace \sum_{x \in \G_u} |f(x) \omega(x)|^p  \right\rbrace^{\frac{\theta}{p}} \left\lbrace \sum_{x \in \G_u} \xi_{\theta}^q \right\rbrace^{\frac{1}{q}} \\
			&\leq \| f \|_{F_{\lambda}^p}^{1-\theta} \| f \|_{\alpha, \beta}^{\theta} \| \xi_\theta \|_q , 
		\end{align*}
		by the generalized Hölder inequality with the exponents $\frac{p}{1-\theta}$, $\frac{p}{\theta}$ and $q$. Also,
		\begin{align*}
			\| f \sigma \|_{\ell^1 (\G^u)} &= \| f^\ast \sigma \|_{\ell^1 (\G_u)} \\
			&\leq \left\lbrace \sum_{x \in \G_u} |f^\ast (x)|^q \right\rbrace^{\frac{1-\theta}{q}} \left\lbrace \sum_{x \in \G_u} |f^\ast (x) \omega(x)|^p  \right\rbrace^{\frac{\theta}{p}} \left\lbrace \sum_{x \in \G_u} \xi_{\theta}^{s} \right\rbrace^{\frac{1}{s}} \\
			&\leq \| f \|_{F_{\lambda}^p}^{1-\theta} \| f \|_{\alpha, \beta}^{\theta} \| \xi_\theta \|_s
		\end{align*}
		where $1 < s = s(\theta)$ is such that $$ \frac{1-\theta}{q} + \frac{\theta}{p} + \frac{1}{s} = 1 .$$ In the third inequality in the preceding computation, we used the fact that the involution on $C_c (\G)$ extends to an isometric anti-isomorphism $\Fp(\G) \to F_{\lambda}^{q}(\G)$ in the case $p \in (1, \infty)$ (this follows for example by \cite[Lemma 3.5]{AustadOrtega:GroupoidsAndHermitianBanachstarAlgebras}), and when $p = 1$, that $\| f \|_\infty \leq \| f \|_{\Fp}$, for $f \in \Sab(\G)$. By our observation in the beginning, we may find $\theta$ such that $\xi_\theta$ is in $\ell^s (\G)$ for every $s \in [1, \infty]$. Choose such a $\theta$,  find $s = s(\theta)$, and put $C_{\alpha, \beta} := 2 \max \{ \| \xi_\theta \|_s , \| \xi_\theta \|_q \} < \infty$. Then $$ \| f \sigma \|_1 \leq \frac{C_{\alpha,\beta}}{2}\cdot  \| f \|_{\Fp}^{1- \theta} \| f \|_{\alpha, \beta}^{\theta}, $$ so that $$ \| f \ast f \|_{\alpha, \beta} \leq C_{\alpha, \beta} \| f \|_{\Fp}^{1- \theta} \| f \|_{\alpha, \beta}^{1+\theta} ,$$ as desired.
	\end{proof}
\end{lemma}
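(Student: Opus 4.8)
The plan is to peel off one factor of $\|f\|_{\alpha,\beta}$ immediately and reduce the entire statement to a single interpolation estimate on $\|f\sigma\|_1$. Applying \cref{lem: inequality following from Young's} with $g = f$ gives $\|f \ast f\|_{\alpha,\beta} \leq 2\|f\sigma\|_1 \|f\|_{\alpha,\beta}$, so it suffices to produce $\theta \in (0,1)$ and a constant for which $\|f\sigma\|_1 \lesssim \|f\|_{\Fp}^{1-\theta}\|f\|_{\alpha,\beta}^{\theta}$; multiplying the two bounds then yields the claimed exponents $1+\theta$ on $\|f\|_{\alpha,\beta}$ and $1-\theta$ on $\|f\|_{\Fp}$.

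To estimate $\|f\sigma\|_1$ I would expand $\sigma = \omega u$ and factor the summand $|f|\omega u$ as $|f|^{1-\theta}\cdot(|f|\omega)^{\theta}\cdot \xi_\theta$, where $\xi_\theta := \omega^{1-\theta}u = \exp(\alpha(1-\theta-2+2^\beta)\lf(\cdot)^\beta)$ is a pure weight carrying no $f$. A generalized Hölder inequality with exponents adapted to this factorization then separates the sum into an $\ell^p$-norm of $f$, an $\ell^p$-norm of $f\omega$, and an $\ell^s$-norm of $\xi_\theta$ for a suitable $s$. The crux is the choice of $\theta$: since $\beta \in (0,1)$ forces $2 > 2^\beta$, the hypothesis $\alpha(2 - 2^\beta) > \alpha_0$ leaves room to pick $\theta$ strictly below $1$, yet close enough to it that $\theta > 2^\beta - 1$ (so $\xi_\theta$ decays at all) and $\alpha(\theta - 1 + 2 - 2^\beta) > \alpha_0$ (so it decays fast enough). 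For such $\theta$, \cref{lem: subexponential growth gives norm estimate of exponential} guarantees $\|\xi_\theta\|_s < \infty$ for every $s \in [1,\infty]$, and this finite quantity is exactly what furnishes the constant $C_{\alpha,\beta}$.

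The remaining work is to control the two fiber sums defining $\|\cdot\|_1$ separately and to convert the raw $\ell^p$-fiber quantities into the operator norm $\|f\|_{\Fp}$. For the $\G_u$-direction this is direct, via $\|f\|_{\ell^p(\G_u)} \leq \|f\|_{\Fp}$, which one gets by evaluating $\lambda_u(f)$ on $\delta_u$. For the $\G^u$-direction I would pass to the involution through $\|f\sigma\|_{\ell^1(\G^u)} = \|f^\ast \sigma\|_{\ell^1(\G_u)}$ and split with a different triple of exponents, so that the factor carrying $f^\ast$ appears as an $\ell^q$-norm; this is precisely where the isometric anti-isomorphism $\Fp(\G) \to F_\lambda^q(\G)$ is used for $p \in (1,\infty)$. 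I expect the main obstacle to be the endpoint $p=1$, where this duality is unavailable: there one must instead invoke $\|f\|_\infty \leq \|f\|_{\Fp}$ to absorb the $q=\infty$ factor and verify that the asymmetric Hölder split still closes up with a genuine $s = s(\theta) > 1$. Arranging a single $\theta$ that is simultaneously compatible with all constraints—summability of $\xi_\theta$, validity of both Hölder splittings, and $\theta \in (0,1)$ strictly—is the delicate point, and it is exactly the strict inequality $\alpha(2-2^\beta) > \alpha_0$ that provides the slack needed to do so.
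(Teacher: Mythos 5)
Your proposal is correct and takes essentially the same route as the paper's proof: the same reduction via \cref{lem: inequality following from Young's} with $g=f$, the same factorization $|f|\sigma = |f|^{1-\theta}\,(|f|\omega)^{\theta}\,\xi_\theta$ with $\xi_\theta = \omega^{1-\theta}u$ and the same two constraints on $\theta$ (namely $\theta > 2^\beta - 1$ and $\alpha(\theta - 1 + 2 - 2^\beta) > \alpha_0$, with summability of $\xi_\theta$ from \cref{lem: subexponential growth gives norm estimate of exponential}), and the same asymmetric H\"older splittings on the $\G_u$- and $\G^u$-fibers, including the isometric anti-isomorphism $\Fp(\G) \to F_{\lambda}^{q}(\G)$ for $p \in (1,\infty)$ and the $\| f \|_\infty \leq \| f \|_{\Fp}$ substitute at $p=1$. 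Nothing essential is missing or different.
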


%The following definition will be used extensively throughout the remainder of this section.
%\begin{definition}
%	For $(\alpha_0, \beta_0) \in \R_{>0} \times (0,1)$ define the index set 
%\end{definition}
Let us define the index set $$ J(\alpha_0 , \beta_0) := \{ (\alpha , \beta) \in \R_{> 0} \times (0,1) \colon (\alpha , \beta) \geq (\alpha_0 , \beta_0) \text{ and } \alpha (2 - 2^\beta) > \alpha_0 \} .$$

\begin{proposition} \label{prop: weighted Lp-spaces are Banach algebras}
	If $(\alpha, \beta) \in J(\alpha_0 , \beta_0)$, then $\Sab(\G)$ is a Banach algebra.
	\begin{proof}
		Let $f,g \in \Sab(\G)$. By \cref{lem: inequality following from Young's}, we have that $$ \| f \ast g \|_{\alpha, \beta} \leq \| f \|_{\alpha , \beta} \| g \sigma \|_1 + \| f \sigma \|_1 \|g \|_{\alpha, \beta} ,$$ and in the proof of \cref{lem: interpolation inequality} we saw that $$ \| f \sigma \|_1 \leq C_{\alpha, \beta} \| f \|_{\Fp}^{1-\theta} \| f \|_{\alpha, \beta}^{\theta} ,$$ for some positive constant $C_{\alpha, \beta}$ and $\theta \in (0,1)$. By \cref{prop: norm inequality for reduced Lp norm and application to inverse of Renaults j-p-map}, $$ \| f \|_{\Fp} \leq K_{\alpha, \beta} \| f \|_{\alpha, \beta} ,$$ for all $f \in C_c (\G)$ and a constant $K_{\alpha, \beta}$, and by continuity of the inclusion, it also holds for all $f \in \Sab(\G)$. Putting all of this together, we see that 
		$$ \|f \ast g \|_{\alpha, \beta} \leq C_{\alpha, \beta}^{\prime} \| f \|_{\alpha, \beta} \| g \|_{\alpha, \beta} ,$$ for some constant $C_{\alpha, \beta}^{\prime} > 0$ not depending on $f$ and $g$. It follows that $\Sab(\G)$ is a Banach algebra.
	\end{proof}
\end{proposition}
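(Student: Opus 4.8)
The plan is to assemble the three estimates established above into one submultiplicative bound for the convolution product on $\Sab(\G)$, the point being that the defining condition $\alpha(2-2^\beta) > \alpha_0$ of the index set $J(\alpha_0, \beta_0)$ is exactly what the interpolation argument of \cref{lem: interpolation inequality} requires. I first note that convolution is well defined on $\Sab(\G)$: since the weight $\sigma = \omega u$ satisfies $\sigma \geq 1$, every $f \in \Sab(\G)$ lies fiberwise in $\ell^1$, so the convolution sums converge absolutely (for instance by \cref{lem: Young's convolution inequality for groupoids} with $p = q = r = 1$).

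For $f, g \in \Sab(\G)$, I would begin from \cref{lem: inequality following from Young's}, which gives
\[\|f \ast g\|_{\alpha, \beta} \leq \|f\|_{\alpha, \beta}\,\|g \sigma\|_1 + \|f \sigma\|_1\,\|g\|_{\alpha, \beta}.\]
This reduces everything to controlling the mixed weighted $\ell^1$-terms. For these I would use the bound $\|f \sigma\|_1 \leq C_{\alpha, \beta}\,\|f\|_{\Fp}^{1-\theta}\|f\|_{\alpha, \beta}^{\theta}$ obtained from the generalized Hölder interpolation carried out in the proof of \cref{lem: interpolation inequality}, valid for a suitable $\theta \in (0,1)$ precisely because $(\alpha, \beta) \in J(\alpha_0, \beta_0)$ forces the auxiliary function $\xi_\theta$ into every $\ell^s(\G)$. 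To eliminate the reduced $L^p$-norm I would then apply \cref{prop: norm inequality for reduced Lp norm and application to inverse of Renaults j-p-map}, giving $\|f\|_{\Fp} \leq K_{\alpha, \beta}\|f\|_{\alpha, \beta}$; although that inequality is stated on $C_c(\G)$, it extends to all of $\Sab(\G)$ by density and continuity of the inclusion. Combining these yields $\|f \sigma\|_1 \leq C_{\alpha, \beta} K_{\alpha, \beta}^{1-\theta}\|f\|_{\alpha, \beta}$, together with the analogous bound for $g$, and feeding both back into the first display produces
\[\|f \ast g\|_{\alpha, \beta} \leq C'_{\alpha, \beta}\,\|f\|_{\alpha, \beta}\|g\|_{\alpha, \beta}, \qquad C'_{\alpha, \beta} = 2 C_{\alpha, \beta} K_{\alpha, \beta}^{1-\theta}.\]

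Finally, submultiplicativity up to the constant $C'_{\alpha, \beta}$ is enough to conclude: rescaling $\|\cdot\|_{\alpha, \beta}$ by $C'_{\alpha, \beta}$ gives an equivalent, genuinely submultiplicative norm on the complete space $\Sab(\G)$, so $\Sab(\G)$ is a Banach algebra. I do not expect a serious obstacle, since the argument merely chains known estimates; the only points requiring care are transporting the $C_c(\G)$-level inequality of \cref{prop: norm inequality for reduced Lp norm and application to inverse of Renaults j-p-map} to the completion, and observing that a bounded (rather than exactly contractive) multiplication still yields a Banach algebra after this equivalent renorming.
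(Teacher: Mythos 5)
Your proposal is correct and takes essentially the same route as the paper's proof: it combines the very same three ingredients --- the estimate of \cref{lem: inequality following from Young's}, the bound $\| f \sigma \|_1 \leq C_{\alpha,\beta} \| f \|_{\Fp}^{1-\theta} \| f \|_{\alpha,\beta}^{\theta}$ from the proof of \cref{lem: interpolation inequality}, and the inequality of \cref{prop: norm inequality for reduced Lp norm and application to inverse of Renaults j-p-map} extended to $\Sab(\G)$ by density and continuity. Your extra remarks (absolute convergence of the convolution sums via $\sigma \geq 1$ together with the finiteness of $\| f\sigma \|_1$, and the renorming that turns the bound $\|f \ast g\|_{\alpha,\beta} \leq C'_{\alpha,\beta}\|f\|_{\alpha,\beta}\|g\|_{\alpha,\beta}$ into a genuinely submultiplicative norm) are standard and merely make explicit what the paper leaves implicit.
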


\begin{proposition} \label{prop: weighted Lp spaces are spectral invariant in reduced groupoid Lp}
	For any $(\alpha, \beta) \in J(\alpha_0 , \beta_0)$, the Banach algebra $\Sab(\G)$ is spectrally invariant in $F_{\lambda}^{p} (\G)$. 
	\begin{proof}
		Recall from \cref{lem: interpolation inequality} that there exist $C_{\alpha, \beta} > 0$ and $\theta \in (0,1)$ such that $$ \| f \ast f\|_{\alpha, \beta} \leq C_{\alpha, \beta} \| f \|_{\alpha, \beta}^{1+ \theta} \| f \|_{\Fp}^{1-\theta} ,$$ for all $f \in \Sab(\G)$. In particular, $$ \| f^{2n} \|_{\alpha, \beta} = \| (f^n)^2 \|_{\alpha, \beta} \leq C_{\alpha, \beta} \| f^n \|_{\alpha, \beta}^{1+ \theta} \| f^n \|_{\Fp}^{1-\theta} .$$ Raising everything to the power $1/2n$ and taking limits gives $$ r_{\Sab(\G)}(f) \leq r_{\Sab(\G)} (f)^{\frac{1+\theta}{2}} r_{F_{\lambda}^{p}(\G)} (f)^{\frac{1-\theta}{2}} ,$$ which is equivalent to $r_{\Sab(\G)} (f) \leq r_{F_{\lambda}^{p}(\G)} (f)$. Since the other inequality follows by the inclusion $\Sab(\G) \hookrightarrow F_{\lambda}^{p}(\G)$, we see that $r_{\Sab(\G)} (f) = r_{F_{\lambda}^{p}(\G)} (f)$, for all $f \in \Sab(\G)$. This suffices by \cite[Lemma 2.7]{LauterMonthubertNistor:SpectralInvarianceForCertainAlgebrasOfPseudodifferentialOperators}.
	\end{proof}
\end{proposition}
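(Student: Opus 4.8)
The plan is to reduce spectral invariance to an equality of spectral radii and then to extract that equality from the interpolation inequality of \cref{lem: interpolation inequality}. By \cref{prop: weighted Lp-spaces are Banach algebras} the space $\Sab(\G)$ is a Banach algebra, and the inclusion $\Sab(\G) \hookrightarrow \Fp(\G)$ is a continuous algebra homomorphism with dense range (its image contains $C_c(\G)$, which is dense in $\Fp(\G)$). According to \cite[Lemma 2.7]{LauterMonthubertNistor:SpectralInvarianceForCertainAlgebrasOfPseudodifferentialOperators}, for such a dense continuous inclusion it suffices to check that the spectral radii coincide, i.e. that $r_{\Sab(\G)}(f) = r_{\Fp(\G)}(f)$ for every $f \in \Sab(\G)$ (computed in the respective unitizations if $\G^{(0)}$ is noncompact). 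One of the two inequalities is automatic: since passing to a larger algebra through a continuous homomorphism can only shrink the spectrum, we have $r_{\Fp(\G)}(f) \leq r_{\Sab(\G)}(f)$.

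For the reverse inequality I would feed the powers $f^n$ into \cref{lem: interpolation inequality}. Writing $f^{2n} = (f^n)^2$, that lemma yields, with the $\theta \in (0,1)$ and constant $C_{\alpha,\beta} > 0$ it provides,
$$ \| f^{2n} \|_{\alpha, \beta} \leq C_{\alpha, \beta}\, \| f^n \|_{\alpha, \beta}^{1+\theta}\, \| f^n \|_{\Fp}^{1-\theta} . $$
Raising to the power $1/(2n)$ and letting $n \to \infty$, the factor $C_{\alpha,\beta}^{1/(2n)} \to 1$ washes out, and using the spectral radius formula $r(g) = \lim_m \| g^m \|^{1/m}$ in each of the two algebras, I obtain
$$ r_{\Sab(\G)}(f) \leq r_{\Sab(\G)}(f)^{\frac{1+\theta}{2}}\, r_{\Fp(\G)}(f)^{\frac{1-\theta}{2}} . $$
If $r_{\Sab(\G)}(f) = 0$ there is nothing to prove; otherwise I divide through by $r_{\Sab(\G)}(f)^{(1+\theta)/2}$ and raise to the power $2/(1-\theta)$, which is legitimate precisely because $\theta < 1$, to conclude $r_{\Sab(\G)}(f) \leq r_{\Fp(\G)}(f)$. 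Combining the two inequalities gives the required equality, and hence spectral invariance.

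The bulk of the difficulty has already been absorbed into \cref{lem: interpolation inequality}; given that inequality, what remains is a short manipulation of spectral radii, and the only point demanding a little care is the passage to the limit along the even powers. One must verify that $\lim_n \| f^{2n} \|_{\alpha,\beta}^{1/(2n)}$ and $\lim_n \| f^n \|_{\alpha,\beta}^{1/(2n)}$ indeed recover $r_{\Sab(\G)}(f)$ and $r_{\Sab(\G)}(f)^{1/2}$ respectively, and likewise that $\lim_n \| f^n \|_{\Fp}^{1/(2n)} = r_{\Fp(\G)}(f)^{1/2}$. This is guaranteed by the existence of the full limit in the spectral radius formula, so that restricting to the subsequence of even exponents changes nothing; everything else is routine algebra.
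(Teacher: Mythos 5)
Your proposal is correct and follows essentially the same route as the paper's proof: applying \cref{lem: interpolation inequality} to $f^{2n}=(f^n)^2$, taking $2n$-th roots and limits to obtain $r_{\Sab(\G)}(f) \leq r_{\Sab(\G)}(f)^{\frac{1+\theta}{2}} r_{F_{\lambda}^{p}(\G)}(f)^{\frac{1-\theta}{2}}$, deducing equality of spectral radii, and invoking \cite[Lemma 2.7]{LauterMonthubertNistor:SpectralInvarianceForCertainAlgebrasOfPseudodifferentialOperators}. Your added care about the zero spectral radius case and the passage along even powers only makes explicit what the paper leaves implicit.
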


%Since for Banach algebras, spectral invariance is equivalent to stability under holomorphic functional calculus by \cite[Lemma 1.2]{Schweitzer:AShortProofThatMnAIsLocal}, \cite[Chapter 3, Appendix C, Proposition 3]{Connes:NoncommutativeGeometry} applies to give the following result.
Since spectral invariance is equivalent to stability under holomorphic functional calculus for Banach algebras by \cite[Lemma 1.2]{Schweitzer:AShortProofThatMnAIsLocal}, \cite[Chapter 3, Appendix C, Proposition 3]{Connes:NoncommutativeGeometry} applies to give the following result.

\begin{corollary} \label{cor: weighted Lp spaces all capture K-theory}
	For any $(\alpha, \beta) \in J(\alpha_0 , \beta_0)$, $K_\ast (\Sab(\G)) \cong K_\ast (F_{\lambda}^{p}(\G))$, where $\ast = 0,1$.
\end{corollary}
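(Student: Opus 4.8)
The plan is to derive the corollary as a formal consequence of \cref{prop: weighted Lp spaces are spectral invariant in reduced groupoid Lp}, using the density theorem for K-theory of dense subalgebras that are closed under holomorphic functional calculus. First I would verify that $\Sab(\G)$ sits inside $\Fp(\G)$ as a \emph{dense} subalgebra: it is a Banach algebra by \cref{prop: weighted Lp-spaces are Banach algebras}, and \cref{prop: norm inequality for reduced Lp norm and application to inverse of Renaults j-p-map} furnishes a continuous injection $\Sab(\G) \hookrightarrow \Fp(\G)$ extending the identity on $C_c(\G)$. Since $C_c(\G)$ is dense in $\Fp(\G)$ and lies in the image of this injection, the inclusion has dense range, as required.

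Next I would invoke \cref{prop: weighted Lp spaces are spectral invariant in reduced groupoid Lp}, which states that $\Sab(\G)$ is spectrally invariant in $\Fp(\G)$. By \cite[Lemma 1.2]{Schweitzer:AShortProofThatMnAIsLocal}, spectral invariance is equivalent, for Banach algebras, to closedness under holomorphic functional calculus. The one point that must be handled with care is that a K-theory isomorphism requires this stability to persist at every matrix level, i.e.\ that $M_n(\Sab(\G))$ be spectrally invariant in $M_n(\Fp(\G))$ for all $n$; this is precisely the content secured by the results of \cite{Schweitzer:AShortProofThatMnAIsLocal}, so the hypotheses propagate to all matrix amplifications.

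With $\Sab(\G)$ realized as a dense, holomorphically closed subalgebra of $\Fp(\G)$ at every matrix level, \cite[Chapter 3, Appendix C, Proposition 3]{Connes:NoncommutativeGeometry} applies and gives that the inclusion induces isomorphisms $K_\ast(\Sab(\G)) \cong K_\ast(\Fp(\G))$ for $\ast = 0,1$. The main---indeed essentially the only---obstacle is the bookkeeping needed to confirm that the cited density theorem applies in the possibly non-unital setting, since $\Fp(\G)$ is unital precisely when $\G^{(0)}$ is compact. This is resolved by passing to unitizations, where spectral invariance lifts and K-theory is in any case computed; no analytic input beyond the spectral invariance of \cref{prop: weighted Lp spaces are spectral invariant in reduced groupoid Lp} is required, so the corollary is a clean formal consequence of the work already carried out.
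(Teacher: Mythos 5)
Your proposal is correct and follows essentially the same route as the paper, which deduces the corollary in one line from \cref{prop: weighted Lp spaces are spectral invariant in reduced groupoid Lp} together with \cite[Lemma 1.2]{Schweitzer:AShortProofThatMnAIsLocal} (equivalence of spectral invariance with stability under holomorphic functional calculus, including at all matrix levels) and the density theorem \cite[Chapter 3, Appendix C, Proposition 3]{Connes:NoncommutativeGeometry}. Your added bookkeeping---density of $\Sab(\G)$ in $\Fp(\G)$ via $C_c(\G)$ and the passage to unitizations in the non-unital case---is left implicit in the paper but is exactly the right way to fill in the details.
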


Recall that $(\alpha, \beta) \geq (\alpha^\prime, \beta^\prime)$ if $\alpha \geq \alpha^\prime$ and $\beta \geq \beta^\prime$. In this case, since $\| \cdot \|_{\alpha, \beta} \geq \| \cdot \|_{\alpha^\prime, \beta^\prime}$, it follows that $\Sab(\G) \subset S_{\lf, (\alpha^\prime , \beta^\prime)}^{p}(\G)$. Therefore, $\{ \Sab(\G) \}_{(\alpha, \beta) \in J(\alpha_0 , \beta_0)}$ forms a directed system of decreasing Banach algebras. 

\begin{definition} \label{def: Fréchet algebra of subexponentially decreasing functions}
	Let $\G$ be an étale groupoid with strong subexponential growth with respect to the locally bounded length function $\lf$ and constants $\alpha_0 > 0$ and $0 < \beta_0 < 1$ as in \cref{def: (strongly) subexponential growth}. We define the space of \emph{strongly subexponentially decreasing functions} as $$ S_{\lf}^{p} (\G) := \bigcap_{ (\alpha, \beta) \in J(\alpha_0 , \beta_0) } \Sab(\G) .$$ Since the system $\{ (\alpha, \beta) \}_{(\alpha, \beta) \in J(\alpha_0 , \beta_0) }$ has a countable cofinal sequence, $S_{\lf}^{p} (\G)$ is an intersection of a decreasing sequence of Banach algebras, hence is a Fréchet algebra.
\end{definition}

Renault's p-j-map extends to an injective continuous algebra homomorphism $S_{\lf}^{p}(\G) \hookrightarrow F_{\lambda}^{p}(\G)$, so we may view $S_{\lf}^{p}(\G)$ as a subalgebra of $F_{\lambda}^{p}(\G)$ endowed with a finer Fréchet algebra topology than the one inherited from $F_{\lambda}^{p}(\G)$.

\begin{proposition} \label{prop: Fréchet-algebra is stable under holomorphic functional calculus}
	If $\G$ has strong subexponential growth with respect to $\lf$, then $S_{\lf}^{p}(\G)$ is stable under holomorphic functional calculus in $F_{\lambda}^{p}(\G)$. 
\end{proposition}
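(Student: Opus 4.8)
The plan is to reduce the Fréchet statement to the Banach-algebra statements already established for each $\Sab(\G)$ and then pass to the intersection. Write $B := \Fp(\G)$ and $S := S_\lf^p(\G)$. As in \cref{def: Fréchet algebra of subexponentially decreasing functions}, I would fix a countable cofinal sequence $\{(\alpha_n, \beta_n)\}_{n \in \N}$ in $J(\alpha_0, \beta_0)$ and set $A_n := S_{\lf, (\alpha_n, \beta_n)}^{p}(\G)$, so that $A_1 \supseteq A_2 \supseteq \cdots$ is a decreasing sequence of Banach algebras with $S = \bigcap_{n} A_n$, carrying the corresponding projective limit topology. By \cref{prop: weighted Lp spaces are spectral invariant in reduced groupoid Lp}, each $A_n$ is spectrally invariant in $B$, and hence---since spectral invariance is equivalent to stability under holomorphic functional calculus for Banach algebras by \cite[Lemma 1.2]{Schweitzer:AShortProofThatMnAIsLocal}, exactly as invoked before \cref{cor: weighted Lp spaces all capture K-theory}---each $A_n$ is stable under holomorphic functional calculus in $B$.

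The main step is then the following observation. Let $a \in S$ and let $f$ be holomorphic on a neighbourhood of the spectrum of $a$ computed in (the unitization of) $B$; when the algebras are non-unital, which happens precisely when $\G^{(0)}$ is noncompact, I would take $f(0) = 0$ and work in the unitizations $A_n^+ \subseteq B^+$, which does not affect the argument. Since $a \in A_n$ and $A_n$ is spectrally invariant, the spectrum of $a$ computed in $A_n$ agrees with that computed in $B$, so $f$ is holomorphic on a neighbourhood of the spectrum of $a$ in $A_n$ as well. Stability of $A_n$ under holomorphic functional calculus then yields $f(a) \in A_n$. As this holds for every $n$, one concludes $f(a) \in \bigcap_n A_n = S$, which is precisely the assertion that $S$ is stable under holomorphic functional calculus in $B$.

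The point requiring care---and the one I expect to be the main obstacle---is that the element $f(a)$ produced by the functional calculus in each $A_n$ must be one and the same element of $B$, namely $f(a)$ computed in $B$; otherwise ``$f(a) \in A_n$ for all $n$'' would not entail membership in the intersection. This is where spectral invariance is used essentially: for $\lambda$ outside the common spectrum, the resolvent $(\lambda - a)^{-1}$ computed in $A_n$ coincides, under the continuous inclusion $A_n \hookrightarrow B$, with the resolvent computed in $B$, by uniqueness of inverses. Consequently the Cauchy integrals $\frac{1}{2\pi i}\oint f(\lambda)(\lambda - a)^{-1}\, d\lambda$ defining $f(a)$ in $A_n$ and in $B$ agree, since the inclusion is a continuous homomorphism and the contour may be taken in a fixed compact set avoiding the spectrum. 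An alternative route, bypassing any appeal to the Banach case, is to verify directly that for each $n$ this contour integral converges in the norm of $A_n$: the integrand is a continuous $A_n$-valued function on the compact contour because inversion is continuous on the invertibles of the Banach algebra $A_n$, so the integral converges in $A_n$, and the compatible family then converges in the Fréchet topology of $S = \varprojlim A_n$. Either way, the reduction to the already-proven Banach case is what makes the Fréchet statement essentially formal.
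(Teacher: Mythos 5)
Your proof is correct, and it uses the same two ingredients as the paper --- \cref{prop: weighted Lp spaces are spectral invariant in reduced groupoid Lp} and \cite[Lemma 1.2]{Schweitzer:AShortProofThatMnAIsLocal} --- but assembles them in the opposite order. The paper's proof applies Schweitzer's lemma once, at the Fréchet level: since $S_{\lf}^{p}(\G)$ carries a finer topology than that of $\Fp(\G)$, stability under holomorphic functional calculus reduces to spectral invariance of $S_{\lf}^{p}(\G)$ in $\Fp(\G)$, and the latter is a one-line consequence of \cref{prop: weighted Lp spaces are spectral invariant in reduced groupoid Lp}: if $a \in S_{\lf}^{p}(\G)$ is invertible in $\Fp(\G)$, then $a^{-1} \in \Sab(\G)$ for every $(\alpha,\beta) \in J(\alpha_0,\beta_0)$, hence $a^{-1}$ lies in the intersection --- and here uniqueness of inverses makes the consistency issue you worry about vacuous. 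You instead apply Schweitzer at the Banach level to each $A_n = S_{\lf,(\alpha_n,\beta_n)}^{p}(\G)$ and intersect the functional calculus, which forces you to check that the element $f(a)$ is the same across all $n$; your resolution (resolvents agree under the continuous inclusions by uniqueness of inverses, so the Cauchy integrals agree; alternatively, Banach-level stability already asserts membership of the $f(a)$ computed in $\Fp(\G)$, making the check automatic) is correct, and your handling of the non-unital case via unitizations with $f(0)=0$ is also fine. The trade-off: your route only ever invokes the spectral-invariance/functional-calculus equivalence for Banach algebras, at the cost of the compatibility argument for contour integrals, whereas the paper's route exploits that Schweitzer's lemma is stated for Fréchet subalgebras, turning the whole proposition into the trivial observation that spectral invariance passes to intersections.
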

\begin{proof}
	Suppose $(\alpha_0 , \beta_0)$ is the pair as in the definition of strong subexponential growth in \cref{def: (strongly) subexponential growth}.
	Since $S_{\lf}^{p}(\G)$ is a Fréchet algebra under a finer topology than the one inherited from $\Fp(\G)$, it suffices by \cite[Lemma 1.2]{Schweitzer:AShortProofThatMnAIsLocal} to see that $S_{\lf}^{p}(\G)$ is spectrally invariant in $F_{\lambda}^{p}(\G)$. For that, let $a \in S_{\lf}^{p}(\G)$ with inverse $a^{-1} \in F_{\lambda}^{p}(\G)$. Then $a \in \Sab(\G)$ for every $(\alpha , \beta) \in J(\alpha_0 , \beta_0)$, and is invertible in $\Fp(\G)$. By \cref{prop: weighted Lp spaces are spectral invariant in reduced groupoid Lp}, $a^{-1} \in \Sab(\G)$ for all $(\alpha , \beta) \in J(\alpha_0 , \beta_0)$, which means that $a^{-1} \in S_{\lf}^{p}(\G)$.
\end{proof}

Before we prove our main result, let us recall the definition of the K-groups for a Fréchet algebra. 
\begin{definition} \label{def: definition of the K-groups for Fréchet algebras}
	Let $\A$ be a unital Fréchet algebra. We define $K_0 (\A)$ as the Grothendieck group of the Abelian semigroup of isomorphism classes of finitely projective $\A$-modules with direct sum as the semigroup multiplication. Using the embeddings $u \mapsto \text{diag}(u,1) $, we define $K_1 (\A) = \varinjlim GL_n (\A)/GL_n (\A)_0 $, where the $GL_n (\A)$ are the invertible matrices in the Fréchet algebra $M_n (\A)$ endowed with the induced topology, and $GL_n (\A)_0$ is the normal subgroup given by the path component of the identity. If $\A$ is not unital, $K_{\ast}(\A)$ is defined to be the kernel of the naturally induced map from $K_{\ast}(\tilde{\A})$ to $K_{\ast}(\C)$.
\end{definition}
When $\A$ is a Banach algebra, the above defines its usual K-groups.

\begin{theorem} \label{thm: K-groups are independent of p}
	If $\G$ has strong subexponential growth with respect to some locally bounded length function, then $K_\ast (F_{\lambda}^{p}(\G))$, for $\ast = 0,1$, is independent of $p \in [1, \infty)$.
\end{theorem}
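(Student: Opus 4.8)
The plan is to deduce the theorem from the K-theoretic properties of the Fréchet algebras $S_{\lf}^{p}(\G)$ established above, by showing that, although each is built from the exponent $p$, the family is in fact \emph{independent of $p$} up to isomorphism. First I would fix $p \in [1,\infty)$ and record that, combining \cref{prop: Fréchet-algebra is stable under holomorphic functional calculus} with \cite[Lemma 1.2]{Schweitzer:AShortProofThatMnAIsLocal} and \cite[Chapter 3, Appendix C, Proposition 3]{Connes:NoncommutativeGeometry} exactly as in \cref{cor: weighted Lp spaces all capture K-theory}, one obtains an isomorphism $K_\ast(S_{\lf}^{p}(\G)) \cong K_\ast(\Fp(\G))$ for $\ast = 0,1$. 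Hence it suffices to prove that for any $p,q \in [1,\infty)$ the Fréchet algebras $S_{\lf}^{p}(\G)$ and $S_{\lf}^{q}(\G)$ are isomorphic. Throughout I keep the reduction (via \cref{rmk: may assume the length function is integer valued}) that $\lf$ is integer-valued, so that the weights $\omega_{\alpha,\beta}$, the constants $(\alpha_0,\beta_0)$, and the index set $J(\alpha_0,\beta_0)$ are all intrinsic to $(\G,\lf)$ and do not involve $p$; only the norm $\|f\|_{\alpha,\beta}^{(q)} := \|f \omega_{\alpha,\beta}\|_q$ depends on the exponent.

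The heart of the argument is a seminorm comparison. I would show that for all $p,q \in [1,\infty)$ and every $(\alpha,\beta) \in J(\alpha_0,\beta_0)$ there exist $\alpha' \geq \alpha$ with $(\alpha',\beta) \in J(\alpha_0,\beta_0)$ and a constant $C > 0$ such that
\begin{equation*}
	\|f\|_{\alpha,\beta}^{(p)} \leq C\,\|f\|_{\alpha',\beta}^{(q)}, \qquad f \in C_c(\G).
\end{equation*}
When $p \geq q$ this is immediate with $\alpha' = \alpha$ and $C=1$, since fiberwise $\|\cdot\|_{\ell^p} \leq \|\cdot\|_{\ell^q}$ and the defining norm is a maximum of suprema of such fiberwise norms. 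When $p < q$ I would argue fiberwise on each $\G_u$ (and symmetrically on each $\G^u$): writing $a_x = |f(x)|\omega_{\alpha',\beta}(x)$ and $w_x = \exp((\alpha-\alpha')\lf(x)^\beta)$, so that $|f(x)|\omega_{\alpha,\beta}(x) = a_x w_x$, Hölder's inequality with the exponents $q/p$ and $q/(q-p)$ gives
\begin{equation*}
	\Big( \sum_{x \in \G_u} |f(x)|^p \omega_{\alpha,\beta}(x)^p \Big)^{1/p} \leq \Big( \sum_{x \in \G_u} a_x^{q} \Big)^{1/q} \Big( \sum_{x \in \G_u} w_x^{\frac{pq}{q-p}} \Big)^{\frac{q-p}{pq}}.
\end{equation*}
The first factor is dominated by $\|f\|_{\alpha',\beta}^{(q)}$, while the second is $\sum_{x\in\G_u} \exp\big(-\tfrac{pq}{q-p}(\alpha'-\alpha)\lf(x)^\beta\big)$, which by \cref{lem: subexponential growth gives norm estimate of exponential} is finite uniformly in $u$ once $\alpha'$ is chosen large enough that $\tfrac{pq}{q-p}(\alpha'-\alpha) > \alpha_0$. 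Since $J(\alpha_0,\beta_0)$ is closed under increasing the first coordinate (both defining conditions are preserved as $\alpha$ grows), one may enlarge $\alpha'$ freely so that $(\alpha',\beta) \in J(\alpha_0,\beta_0)$, and the estimate follows.

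Finally I would conclude as follows. Applying the estimate with $p$ and $q$ interchanged shows that the two families of seminorms $\{\|\cdot\|_{\alpha,\beta}^{(p)}\}_{(\alpha,\beta)}$ and $\{\|\cdot\|_{\alpha,\beta}^{(q)}\}_{(\alpha,\beta)}$ mutually dominate on $C_c(\G)$ after a harmless reindexing of $\alpha$, so they generate the same topology there. Since $C_c(\G)$ is dense in each of the Banach algebras $S_{\lf,(\alpha,\beta)}^{r}(\G)$ by construction, and the defining seminorms of $S_{\lf}^{r}(\G)$ increase along the cofinal directed set $J(\alpha_0,\beta_0)$, $C_c(\G)$ is dense in $S_{\lf}^{r}(\G)$ for $r = p,q$. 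The identity on $C_c(\G)$ therefore extends to mutually inverse continuous homomorphisms $S_{\lf}^{q}(\G) \to S_{\lf}^{p}(\G)$ and $S_{\lf}^{p}(\G) \to S_{\lf}^{q}(\G)$, i.e.\ a Fréchet algebra isomorphism $S_{\lf}^{p}(\G) \cong S_{\lf}^{q}(\G)$; multiplicativity passes from the dense subalgebra $C_c(\G)$ by continuity of the product. Combined with the first paragraph this yields $K_\ast(\Fp(\G)) \cong K_\ast(F_\lambda^{q}(\G))$ for all $p,q \in [1,\infty)$. The main obstacle is precisely the seminorm estimate in the regime $p < q$, where one must pass from a larger mixed norm to a smaller one: it is the subexponential slack --- the freedom to enlarge the weight parameter $\alpha$ inside $J(\alpha_0,\beta_0)$ while retaining summability through \cref{lem: subexponential growth gives norm estimate of exponential} --- that both makes this possible and dissolves the dependence on the exponent.
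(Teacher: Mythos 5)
Your proposal is correct and follows essentially the same route as the paper: first identify $K_\ast(S_{\lf}^{p}(\G)) \cong K_\ast(F_{\lambda}^{p}(\G))$ via spectral invariance of the Fréchet algebra, then show the Fréchet algebras of strongly subexponentially decreasing functions do not depend on the exponent by a Hölder estimate whose error term is controlled by \cref{lem: subexponential growth gives norm estimate of exponential} after enlarging $\alpha$ within $J(\alpha_0,\beta_0)$. The only (immaterial) difference is bookkeeping: the paper anchors all comparisons at $p=1$, proving $S_{\lf}^{p}(\G) = S_{\lf}^{1}(\G)$ with the specific reindexing $\alpha \mapsto 2\alpha$, whereas you compare arbitrary $p,q$ directly with the Hölder pair $q/p$, $q/(q-p)$ and a free choice of $\alpha'$.
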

\begin{proof}
	Suppose $(\alpha_0 , \beta_0) $ are the constants as in the definition of strong subexponential growth for $\G$ in \cref{def: (strongly) subexponential growth}, and let $S_{\lf}^{p}(\G)$, for $p \in [1, \infty)$, be the associated Fréchet algebras of strongly subexponentially decreasing functions. Combining \cref{prop: Fréchet-algebra is stable under holomorphic functional calculus} with \cite[Lemma 2.3]{AustadOrtegaPalmstrom:PolynomialGrowthAndPropertyRDpForEtaleGroupoids}, we have that $$ K_\ast (S_{\lf}^{p}(\G)) \cong K_\ast (F_{\lambda}^{p}(\G)) ,$$ for $\ast = 0,1$ and $p \in [1, \infty)$. Therefore, it suffices to prove that $$ S_{\lf}^{p}(\G) = S_{\lf}^{1}(\G) ,$$ as Fréchet algebras, for all $p \in (1, \infty)$. It is straightforward to verify that $$ \| a \|_{\alpha, \beta, p} \leq \| a \|_{\alpha, \beta, 1} ,$$ for all $a \in S_{\lf, (\alpha, \beta)}^{1}(\G)$ and $(\alpha, \beta) \in J(\alpha_0 , \beta_0)$, where $\| \cdot \|_{\alpha, \beta, p}$ is the norm on $\Sab(\G)$. 
	%so 
	It follows that the inclusion $S_{\lf}^{1}(\G) \subset S_{\lf}^{p}(\G)$ is continuous. Conversely, fix $(\alpha, \beta) \in J(\alpha_0 , \beta_0)$. By an application of Hölder's inequality together with \cref{lem: subexponential growth gives norm estimate of exponential} it follows that there exists a constant $K_{\alpha, \beta} > 0$ such that $$ \| a \|_{\alpha, \beta, 1} \leq K_{\alpha , \beta} \| a \|_{2 \alpha, \beta, p} ,$$ for all $a \in S_{\lf, (\alpha, \beta)}^{p}(\G)$. Since also $(2 \alpha , \beta) \in J(\alpha_0 , \beta_0)$, it follows that the inclusion $S_{\lf}^{p}(\G) \subset S_{\lf}^{1}(\G)$ is continuous, so that $S_{\lf}^{p}(\G) = S_{\lf}^{1}(\G)$ as Fréchet algebras.
\end{proof}

A symmetrized version of $F_{\lambda}^{p}(\G)$, the Banach $^*$-algebra $F_{\lambda}^{p, \ast}(\G)$, has recently appeared in the literature (see for example \cite{AustadOrtega:GroupoidsAndHermitianBanachstarAlgebras, ElkiearPooya:PropertyTForBanachAlgebras, LiaoYu:KTheoryOfGroupBanachAlgebrasAndRD,PhillipsSimplicity2019,SameiWiersma:ExoticCstarAlgebrasOfGeometricGroups, SameiWiersma:QuasiHermitianLocallyCompactGroupsAreAmenable}), often under the name symmetrized $p$-pseudofunctions. In what follows, we shall recall the definition of these and explain how \cref{thm: K-groups are independent of p} can be extended to these Banach $^*$-algebras. We fix $p \in [1, \infty)$, and on $C_c (\G)$ we define the norm $$ \| f \|_{p, \ast} := \max \{ \| f \|_{\Fp} \, , \, \| f^\ast \|_{\Fp} \} .$$ 
We let $F_{\lambda}^{p, \ast}(\G)$ be completion of $C_c (\G)$ in this norm, which is a Banach $^*$-algebra. Notice that $$F_{\lambda}^{1, \ast}(\G) = L^I (\G) = \overline{C_c (\G)}^{\| \cdot \|_I} .$$
By the same argument as in 
\cite[Proposition 3.9]{AustadOrtega:GroupoidsAndHermitianBanachstarAlgebras}, the inclusion $C_c (\G) \subset \Cred(\G)$ extends to an injective contraction $F_{\lambda}^{p,\ast}(\G) \hookrightarrow \Cred(\G)$. Consequently, Renault's j-map restricts to a contractive injection $j_{p,\ast} \colon F_{\lambda}^{p,\ast}(\G) \to C_0 (\G)$. Suppose now that $\G$ has strong subexponential growth with respect to a locally bounded length function $\lf$ and constants $(\alpha_0 , \beta_0)$ as in \cref{def: (strongly) subexponential growth}. Then, since the analogous inequality as in \cref{prop: norm inequality for reduced Lp norm and application to inverse of Renaults j-p-map} holds for the norm on $F_{\lambda}^{p, \ast}(\G)$, we can argue as therein to obtain that for $(\alpha , \beta) \in J(\alpha_0 , \beta_0)$, the inverse of the restricted Renault's j-map is an injective continuous homomorphism $S_{\lf, (\alpha, \beta)}^{2}(\G) \hookrightarrow F_{\lambda}^{p, \ast}(\G)$, and hence $S_{\lf, (\alpha, \beta)}^{2}(\G)$ may be identified as a Banach subalgebra of $F_{\lambda}^{p, \ast}(\G)$ under a finer topology. 

\begin{corollary} \label{cor: Lp Banach *-algebras have same K-theory as C*-algebra}
	If $\G$ has strong subexponential growth with respect to some locally bounded length function, then $$K_\ast (F_{\lambda}^{p,\ast}(\G)) \cong K_\ast (F_{\lambda}^{p}(\G)) \cong K_\ast (C_{r}^{\ast}(\G)) ,$$ for $\ast = 0,1$ and $p \in [1, \infty)$. 
\end{corollary}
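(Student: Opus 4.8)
The plan is to reduce everything to a single Fréchet algebra that does not depend on $p$. The rightmost isomorphism $K_\ast(\Fp(\G)) \cong K_\ast(\Cred(\G))$ is immediate: since $F_\lambda^2(\G) = \Cred(\G)$, it is the special case $p=2$ of \cref{thm: K-groups are independent of p}. It therefore remains to identify $K_\ast(F_\lambda^{p,\ast}(\G))$ with $K_\ast(\Cred(\G))$. For this I would use the $\ell^2$-weighted algebras $S_{\lf,(\alpha,\beta)}^{2}(\G)$ and their intersection $S_{\lf}^{2}(\G) := \bigcap_{(\alpha,\beta)\in J(\alpha_0,\beta_0)} S_{\lf,(\alpha,\beta)}^{2}(\G)$, formed exactly as in \cref{def: Fréchet algebra of subexponentially decreasing functions} but with the fixed exponent $2$. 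The decisive point is that the norm $\| \cdot \|_{\alpha,\beta,2} = \| (\cdot)\,\omega_{\alpha,\beta} \|_2$ involves no $p$, so $S_{\lf}^{2}(\G)$ is one and the same Fréchet algebra regardless of $p$. By the discussion preceding the statement, each $S_{\lf,(\alpha,\beta)}^{2}(\G)$ includes continuously and densely into $F_\lambda^{p,\ast}(\G)$ via the inverse of Renault's $j$-map.

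The key step is to show that, for every $p \in [1,\infty)$ and $(\alpha,\beta) \in J(\alpha_0,\beta_0)$, the Banach algebra $S_{\lf,(\alpha,\beta)}^{2}(\G)$ is spectrally invariant in $F_\lambda^{p,\ast}(\G)$. The crucial observation is that the required interpolation estimate is already available at the exponent $2$: applying \cref{lem: interpolation inequality} with $p=2$ yields $\theta \in (0,1)$ and $C_{\alpha,\beta} > 0$ with
$$ \| f \ast f \|_{\alpha,\beta,2} \leq C_{\alpha,\beta}\, \| f \|_{\alpha,\beta,2}^{1+\theta}\, \| f \|_{F_\lambda^2}^{1-\theta} = C_{\alpha,\beta}\, \| f \|_{\alpha,\beta,2}^{1+\theta}\, \| f \|_{\Cred}^{1-\theta} . $$
Because the inclusion $F_\lambda^{p,\ast}(\G) \hookrightarrow \Cred(\G)$ is contractive, one has $\| f \|_{\Cred} \leq \| f \|_{p,\ast}$, and hence
$$ \| f \ast f \|_{\alpha,\beta,2} \leq C_{\alpha,\beta}\, \| f \|_{\alpha,\beta,2}^{1+\theta}\, \| f \|_{p,\ast}^{1-\theta} . $$
Substituting $\| \cdot \|_{\alpha,\beta,2}$ for $\| \cdot \|_{\alpha,\beta}$ and $\| \cdot \|_{p,\ast}$ for $\| \cdot \|_{\Fp}$ throughout the spectral-radius computation of \cref{prop: weighted Lp spaces are spectral invariant in reduced groupoid Lp} then gives $r_{S_{\lf,(\alpha,\beta)}^{2}(\G)}(f) = r_{F_\lambda^{p,\ast}(\G)}(f)$ for all $f$, and spectral invariance follows from \cite[Lemma 2.7]{LauterMonthubertNistor:SpectralInvarianceForCertainAlgebrasOfPseudodifferentialOperators}. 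Exploiting the $\ast$-invariance of the $\Cred$-norm is precisely what lets the single ambient quantity $\| \cdot \|_{p,\ast}$ control both fibre directions at once, so the argument of \cref{lem: interpolation inequality} transfers without re-running the duality step for the symmetrized norm.

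Passing to the intersection over $(\alpha,\beta) \in J(\alpha_0,\beta_0)$ exactly as in \cref{prop: Fréchet-algebra is stable under holomorphic functional calculus}, the Fréchet algebra $S_{\lf}^{2}(\G)$ is stable under holomorphic functional calculus in $F_\lambda^{p,\ast}(\G)$, so \cite[Lemma 2.3]{AustadOrtegaPalmstrom:PolynomialGrowthAndPropertyRDpForEtaleGroupoids} gives $K_\ast(S_{\lf}^{2}(\G)) \cong K_\ast(F_\lambda^{p,\ast}(\G))$ for every $p \in [1,\infty)$ and $\ast = 0,1$. Specializing to $p = 2$, where $F_\lambda^{2,\ast}(\G) = F_\lambda^2(\G) = \Cred(\G)$ by $\ast$-invariance of the C*-norm, yields $K_\ast(S_{\lf}^{2}(\G)) \cong K_\ast(\Cred(\G))$. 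Since $S_{\lf}^{2}(\G)$ does not depend on $p$, combining these isomorphisms with the one from \cref{thm: K-groups are independent of p} produces the chain $K_\ast(F_\lambda^{p,\ast}(\G)) \cong K_\ast(S_{\lf}^{2}(\G)) \cong K_\ast(\Cred(\G)) \cong K_\ast(\Fp(\G))$, as claimed. I expect no genuine analytic obstacle here, since the hard estimate has been absorbed into the $p=2$ case of \cref{lem: interpolation inequality}; the remaining care is bookkeeping, namely checking that the interpolation and embedding constants are uniform enough over the directed system for $S_{\lf}^{2}(\G)$ to inherit holomorphic closedness, and confirming that the endpoint $F_\lambda^{1,\ast}(\G) = L^I(\G)$ causes no trouble, which it does not because $\| \cdot \|_{\Cred} \leq \| \cdot \|_I$.
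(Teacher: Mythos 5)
Your proposal is correct, and it follows the paper's structural route — the weighted $\ell^2$-algebras $S_{\lf,(\alpha,\beta)}^{2}(\G)$, their spectral invariance in $F_{\lambda}^{p,\ast}(\G)$, stability under holomorphic functional calculus, and finally \cref{thm: K-groups are independent of p} — but it handles the one decisive step by a different, heavier mechanism. You re-run the analytic machinery: you apply \cref{lem: interpolation inequality} at exponent $2$, replace $\| f \|_{\Cred}$ by $\| f \|_{p,\ast}$ via the contractive embedding $F_{\lambda}^{p,\ast}(\G) \hookrightarrow \Cred(\G)$, and then repeat the spectral-radius computation of \cref{prop: weighted Lp spaces are spectral invariant in reduced groupoid Lp} against the symmetrized norm. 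The paper instead gets the same spectral invariance for free from a purely algebraic sandwich: by \cref{prop: weighted Lp spaces are spectral invariant in reduced groupoid Lp} with $p=2$, the algebra $S_{\lf,(\alpha,\beta)}^{2}(\G)$ is spectrally invariant in $\Cred(\G)$, and from the inclusions $S_{\lf,(\alpha,\beta)}^{2}(\G) \subset F_{\lambda}^{p,\ast}(\G) \subset \Cred(\G)$, any $a \in S_{\lf,(\alpha,\beta)}^{2}(\G)$ invertible in the middle algebra is invertible in $\Cred(\G)$ with the \emph{same} inverse, which spectral invariance in $\Cred(\G)$ puts back in $S_{\lf,(\alpha,\beta)}^{2}(\G)$. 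Note that the contraction $\| \cdot \|_{\Cred} \leq \| \cdot \|_{p,\ast}$ that you invoke to push your estimate through is precisely what makes this two-line argument work, so your re-derivation is sound but redundant: the hard analysis was already absorbed into the $p=2$ case of \cref{prop: weighted Lp spaces are spectral invariant in reduced groupoid Lp}, and nothing about the symmetrized norm needs to be revisited. Two minor further differences: the paper concludes at the level of the Banach algebras $S_{\lf,(\alpha,\beta)}^{2}(\G)$ directly rather than passing to the Fréchet intersection (both yield the K-theory identification, yours via \cite[Lemma 2.3]{AustadOrtegaPalmstrom:PolynomialGrowthAndPropertyRDpForEtaleGroupoids}), and your closing worry about uniformity of constants over the directed system is moot, since holomorphic closedness of the intersection needs only spectral invariance of each member, exactly as in \cref{prop: Fréchet-algebra is stable under holomorphic functional calculus}.
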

\begin{proof}
	Suppose $(\alpha_0 , \beta_0)$ is the pair of constants as in \cref{def: (strongly) subexponential growth} for $\G$. By \cref{prop: weighted Lp spaces are spectral invariant in reduced groupoid Lp}, $S_{\lf, (\alpha, \beta)}^{2} (\G)$ is spectrally invariant in $C_{r}^{\ast} (\G)$, for any $(\alpha, \beta) \in J(\alpha_0 , \beta_0)$. It then follows from the inclusions $$S_{\lf, (\alpha, \beta)}^{2} (\G) \subset F_{\lambda}^{p,\ast}(\G) \subset C_{r}^{\ast} (\G) ,$$ that $S_{\lf, (\alpha, \beta)}^{2} (\G)$ is spectrally invariant in $F_{\lambda}^{p,\ast}(\G)$ as well. Therefore, $$K_\ast (F_{\lambda}^{p, \ast}(\G)) \cong K_\ast (S_{\lf, (\alpha, \beta)}^{2} (\G)) \cong K_\ast (C_{r}^{\ast}(\G)) ,$$ for all $p \in [1, \infty)$. The result then follows by combining this with \cref{thm: K-groups are independent of p}.
\end{proof}

\section{Examples} \label{sec: examples}

The purpose of this final section is to exhibit several examples of étale groupoids with (strong) subexponential growth. In \cref{subsec: proper groupoids}, we show that all second countable proper étale groupoids are in fact of polynomial growth. Then, in \cref{subsec: coarse groupoids}, we show how the results from \cite[Section 5.1]{AustadOrtegaPalmstrom:PolynomialGrowthAndPropertyRDpForEtaleGroupoids} apply to give coarse groupoids with (strong) subexponential growth associated with many already existing examples in the literature of metric spaces. % with this growth type. 
Finally, in \cref{subsec: shift groupoid} we use ideas from \cite{Nathanson:NumberTheoryAndSemigroupsOfIntermediateGrowth} as well as recent results of Brix, Hume and Li in \cite{BrixHumeLi:MinimalCovers} to construct an étale groupoid from a shift space of infinite type which has strong subexponential growth and not polynomial.

\subsection{Proper groupoids} \label{subsec: proper groupoids}

Recall that an étale groupoid $\G$ is called \emph{proper} if the map $(r,s) \colon \G \to \G^{(0)} \times \G^{(0)} \, , \, x \mapsto (r(x) , s(x))$ is proper, that is, $(r,s)^{-1}(K) \subset \G$ is compact whenever $K \subset \G^{(0)} \times \G^{(0)}$ is compact. 

\begin{proposition} \label{prop: proper étale groupoids are of linear growth}
	If $\G$ is a second countable proper étale groupoid, then there is a locally bounded length function with respect to which $\G$ has polynomial growth.
	\begin{proof}
		Since $\G^{(0)}$ is locally compact Hausdorff and second countable, we can find an increasing sequence of compact sets $\{K_i\}_{i = 1}^{\infty}$ such that $$\G^{(0)} = \bigcup_{i = 1}^{\infty} K_i = \bigcup_{i = 1}^{\infty} {K_i}^{\circ}.$$ For each $i \in \N$, let $$\G(i) := \G(K_i) = \{x \in \G \colon s(x) , r(x) \in K_i \} ,$$ so that $\G(i) \subset \G(i+1)$, for each $i \in \N$. Then since $\G$ is proper, it follows that each $\G(i)$ is a compact subgroupoid of $\G$, and, moreover, $$ \G = \bigcup_{i = 1}^{\infty} \G(i) = \bigcup_{i = 1}^{\infty} {\G(i)}^{\circ} .$$ Let $f \colon \G \to \C$ be a positive continuous compactly supported function such that $1_{\G(i)} \leq f$. Then since the counting measures form a continuous Haar system, the map $$ u \mapsto \left| \G(i)_u \right| = \sum_{x \in \G_u} 1_{\G(i)} (x) \leq \sum_{x \in \G_u} f(x) ,$$ has a finite supremum. Let us denote this supremum by $p(i)$, so that $$ p(i) := \sup_{u \in \G^{(0)}} | \G(i)_u | < \infty .$$ Define a length function $\lf \colon \G \to \R_{\geq 0}$ as follows: $\lf(u) := 0$, for all $u \in \G^{(0)}$, and if $x \in \G \setminus \G^{(0)} $ then $$\lf(x) := \min \{ p(i) \colon x \in \G(i) \}.$$ We claim that $\lf$ is a locally bounded length function. Indeed, given $x \in \G$, there exists $i \in \N$ such that $x \in \G(i)$, and for any such $i$, $x \in \G(i)$ if and only if $x^{-1} \in \G(i)$, and therefore $\lf(x^{-1}) = \lf(x)$, for all $x \in \G$. Moreover, if $(x, y) \in \G^{(2)}$, say $x \in \G(n)$ and $y \in \G(m)$, let $k := \max \{ n,m \}$. Then $x, y , x y \in \G(k)$, so that $$ \lf(x y) \leq p(k) \leq p(n) + p(m) ,$$ for any such $n$ and $m$. Thus $\lf(x y) \leq \lf(x) + \lf(y)$, so that $\lf$ is indeed a length function. To see that it is locally bounded, let $K \subset \G$ be compact. Since also $\G = \bigcup_{i = 1}^{\infty} {\G(i)}^{\circ}$, compactness produces an $N \in \N$ such that $K \subset \G(N)$, and so $\lf(K) \leq p(N) < \infty$, so $\lf$ is a locally bounded length function. Now let $N \in \N$ be given, and if possible, find the largest $k \in \N$ such that $p(k) \leq N$. By definition, if $x \notin \G(k)$, then $\lf(x) > N$, and
		so, for any $u \in \G^{(0)}$, $$ \left| B_{\G_u} (N) \right| = \left| \{ x \in \G_u \colon \lf(x) \leq N \} \right| \leq | \{ x \in \G(k)_u \} | \leq p(k) \leq N .$$ If no such $k \in \N$ exist, then $\left| B_{\G_u} (N) \right| = 1 \leq N$, for any $u \in \G^{(0)}$. Therefore, $\G$ has polynomial growth. 
	\end{proof}
\end{proposition}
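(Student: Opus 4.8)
The plan is to manufacture the length function directly from the proper structure, exhausting both the unit space and the groupoid by nested compact pieces and letting the value of $\lf$ on an arrow record how large a fibre one must allow in order to capture that arrow. First I would use that $\G^{(0)}$ is locally compact, Hausdorff and second countable to choose an increasing sequence of compact sets $K_1 \subseteq K_2 \subseteq \cdots$ with $\G^{(0)} = \bigcup_i K_i = \bigcup_i K_i^\circ$. For each $i$ set $\G(i) := \{ x \in \G : r(x), s(x) \in K_i \} = (r,s)^{-1}(K_i \times K_i)$. Properness of $(r,s)$ makes every $\G(i)$ compact, the chain $\G(1) \subseteq \G(2) \subseteq \cdots$ exhausts $\G$, and passing to interiors one also gets $\G = \bigcup_i \G(i)^\circ$, which is what will eventually give local boundedness. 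The crucial quantitative input is that the fibre cardinalities are uniformly bounded over each compact piece: because $\G$ is étale the counting measures form a continuous Haar system, so dominating $1_{\G(i)}$ by a nonnegative continuous compactly supported $f$ makes $u \mapsto \sum_{x \in \G_u} f(x)$ a continuous function supported inside the compact set $s(\operatorname{supp} f)$, hence bounded. This yields
\[ p(i) := \sup_{u \in \G^{(0)}} |\G(i)_u| < \infty , \]
and since $\G(i) \subseteq \G(i+1)$ the sequence $p(i)$ is nondecreasing.

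With these numbers in hand I would set $\lf(u) = 0$ for units and, for $x \in \G \setminus \G^{(0)}$, define $\lf(x) := \min\{ p(i) : x \in \G(i) \}$; equivalently $\lf(x) = p(i_0(x))$, where $i_0(x)$ is the first index whose piece contains $x$ (this exists by exhaustion). Symmetry is immediate since $x \in \G(i) \iff x^{-1} \in \G(i)$. For the triangle inequality, if $(x,y) \in \G^{(2)}$ with $x \in \G(n)$ and $y \in \G(m)$, then $r(xy) = r(x)$ and $s(xy) = s(y)$ place $xy$ in $\G(k)$ for $k = \max\{n,m\}$, so monotonicity of $p$ gives $\lf(xy) \leq p(k) \leq p(n) + p(m)$; taking $n, m$ minimal yields $\lf(xy) \leq \lf(x) + \lf(y)$. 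Local boundedness follows because any compact $K \subseteq \G$ sits inside some $\G(N)$ by the covering $\G = \bigcup_i \G(i)^\circ$, whence $\lf(K) \leq p(N) < \infty$.

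Finally, for the growth estimate, fix $t = N \in \N$ and let $k$ be the largest index with $p(k) \leq N$ (if none exists the ball is a singleton and there is nothing to prove). If $x \notin \G(k)$, then every piece containing $x$ has index exceeding $k$, so $\lf(x) > N$ by maximality of $k$; hence $B_{\G_u}(N) \subseteq \G(k)_u$ and
\[ |B_{\G_u}(N)| \leq |\G(k)_u| \leq p(k) \leq N \]
for every unit $u$, uniformly in $u$. This gives linear, hence in particular polynomial, growth. The only genuinely delicate step is the uniform fibre bound $p(i) < \infty$: it is exactly here that étaleness (continuity of the counting Haar system) and properness (compactness of $\G(i)$) must be combined, and once those finite constants are available everything else is formal bookkeeping.
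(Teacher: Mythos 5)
Your proposal is correct and follows essentially the same route as the paper's own proof: the same compact exhaustion $K_i$, the same sets $\G(i)=(r,s)^{-1}(K_i\times K_i)$ made compact by properness, the same uniform fibre bound $p(i)$ via the continuous Haar system of counting measures, the same length function $\lf(x)=\min\{p(i): x\in\G(i)\}$, and the same final estimate $|B_{\G_u}(N)|\leq p(k)\leq N$. There is nothing to add; your version is, if anything, slightly more explicit about why the supremum defining $p(i)$ is finite.
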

 
\begin{remark} \label{rmk: Paravicini's results on proper groupoids gives also the same result}
	Suppose $\G$ is a second countable proper étale groupoid. By \cref{prop: proper étale groupoids are of linear growth}, $\G$ has polynomial growth. Hence for any %$\alpha \in \R_{> 0}$ 
	$\alpha > 0$ and $\beta \in (0,1)$, $S_{\lf, (\alpha, \beta)}^{2} (\G)$ is a Banach algebra that is an unconditional completion of $C_c (\G)$ which is continuously contained in $C^*_r(\G)$. Therefore, combining \cref{cor: weighted Lp spaces all capture K-theory} and \cref{thm: K-groups are independent of p} with  \cite[Proposition 2.4]{Paravicini:TheBostConjectureAndProperBanachAlgebras}, we have that $$K_\ast (F_{\lambda}^{p}(\G)) \cong K_\ast (\mathcal{A}(\G)) ,$$
	for any $p\in [1,\infty)$ and any unconditional completion $\mathcal{A}(\G)$ of $C_c (\G)$ such that the inclusion $\mathcal{A}(\G)\subseteq C^*_r(\G)$ is continuous. 
\end{remark}

\subsection{Coarse groupoids} \label{subsec: coarse groupoids}

Let us briefly recall the definition of coarse groupoids: We say that an extended metric space $(X,d)$ is \emph{uniformly locally finite} if for every $R >0$ we have
\begin{align*}
	\sup_{x \in X} \vert \overline{B}(x,R) \vert < \infty 
\end{align*}
where $\overline{B}(x,R)$ is the closed ball of radius $R$ around $x$.  %One associates 
From this we construct an étale groupoid, denoted by $\G_{(X,d)}$, as follows. For every $r  > 0$, let $E_r := \{ (x,y) \in X \times X \mid d(x,y) \leq r \}$, and define 
\begin{align*}
	\G_{(X,d)} := \bigcup_{r \geq 0} \overline{E_r}
\end{align*}
where the closure is taken in the Stone-\v{C}ech compactification $\beta X \times \beta X$. The unit space $\G_{(X,d)}^{(0)}$ is identified with $\overline{E_0} \cong \beta X$. The range and source maps are the natural extensions of the first and second projection map on $X\times X$ to the Stone-\v{C}ech compactification, and the multiplication map is inherited from the pair groupoid multiplication on $\beta X \times \beta X$. 

By \cite[Lemma 5.6]{AustadOrtegaPalmstrom:PolynomialGrowthAndPropertyRDpForEtaleGroupoids} we know that the metric $d$ naturally extends to a metric $\beta d$ on $\G_{(X,d)}$ which induces a continuous and proper length function $\lf_{\beta d}$ on $\G_{(X,d)}$. We recall the following result from \cite[Proposition 5.8]{AustadOrtegaPalmstrom:PolynomialGrowthAndPropertyRDpForEtaleGroupoids}.
\begin{proposition}\label{prop:growth-in-Stone-Cech}
	Let $(X,d)$ be a uniformly locally finite extended metric space, and suppose there is a function $f \colon \R_{\geq 0} \to \R_{\geq 0}$ for which $\vert \overline{B}(x,r)\vert \leq f(r)$ for all $x \in X$ and all $r \geq 0$. Let $\chi \in \beta X = \G_{(X,d)}^{(0)}$. Then
	\begin{align*}
		\vert B_{(\G_{(X,d)})_\chi} (r) \vert \leq 
		\begin{cases}
			f(r) & \chi \in X \\
			M f(r)^2 & \chi \in \beta X \setminus X
		\end{cases}
	\end{align*}
	for a sufficiently large constant $M$. That is, the growth of the groupoid $\G_{(X,d)}$ is bounded above by the growth of $f^2$. 
\end{proposition}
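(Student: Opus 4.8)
The plan is to estimate the ball directly as a source fibre of the entourage $\overline{E_r}$. By the construction of the proper length function $\lf_{\beta d}$ in \cite[Lemma 5.6]{AustadOrtegaPalmstrom:PolynomialGrowthAndPropertyRDpForEtaleGroupoids}, one has the containment $B_{(\G_{(X,d)})_\chi}(r)\subseteq \overline{E_r}\cap s^{-1}(\chi)$, so it suffices to bound the number of $\gamma\in\overline{E_r}$ with $s(\gamma)=\chi$. I would treat the two cases separately: the interior case $\chi\in X$ is essentially a point-set observation, while the boundary case $\chi\in\beta X\setminus X$ is the substantial one.

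For $\chi\in X$: since $(X,d)$ is uniformly locally finite it is a discrete metric space (each closed ball being finite forces a positive distance to the nearest distinct point), so every point of $X$ is isolated in $\beta X$ and $\{\chi\}$ is clopen. Hence $\beta X\times\{\chi\}$ is clopen in $\beta X\times\beta X$, and for any clopen set $U$ one has $\overline{A}\cap U=\overline{A\cap U}$. Applying this with $A=E_r$ gives $\overline{E_r}\cap(\beta X\times\{\chi\})=\overline{E_r\cap(X\times\{\chi\})}=\overline{\{(y,\chi):d(y,\chi)\le r\}}$. This last set is a finite set of isolated points, hence closed, so the fibre has cardinality $|\overline{B}(\chi,r)|\le f(r)$, as required.

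For $\chi\in\beta X\setminus X$ I would cover $\overline{E_r}$ by a controlled number of compact open bisections, using that $s$ is injective on any bisection. Fix a linear order on $X$. For $(x,y)\in E_r$ let $\rho(x,y)\in\{1,\dots,f(r)\}$ be the rank of $y$ inside the ordered ball $\overline{B}(x,r)$ and $\sigma(x,y)$ the rank of $x$ inside $\overline{B}(y,r)$; these are well defined since $|\overline{B}(\cdot,r)|\le f(r)$. Partitioning $E_r$ according to the value of $(\rho,\sigma)$ yields at most $f(r)^2$ classes $t_{a,b}$, and each class is the graph of a partial bijection: if two pairs in $t_{a,b}$ share their first coordinate they assign the same rank $a$ to their second coordinates inside a common ball, hence coincide, and symmetrically for the second coordinate. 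Thus $E_r=\bigsqcup_{a,b}t_{a,b}$ with at most $f(r)^2$ partial translations of displacement at most $r$. Taking closures in $\beta X\times\beta X$, and using that finite unions commute with closure, gives $\overline{E_r}=\bigcup_{a,b}\overline{t_{a,b}}$, where each $\overline{t_{a,b}}$ is a compact open bisection of $\G_{(X,d)}$ on which $s$ is injective; hence it meets $s^{-1}(\chi)$ in at most one point. Summing over the at most $f(r)^2$ classes yields $|B_{(\G_{(X,d)})_\chi}(r)|\le f(r)^2$, and absorbing the diagonal class and any bookkeeping into a constant $M$ gives the stated bound $M f(r)^2$.

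The main obstacle is the boundary case, and within it two points require care. First, one must produce a decomposition of $E_r$ into partial bijections whose number is genuinely controlled by $f(r)$ rather than merely finite; this is exactly what the rank functions $\rho,\sigma$ achieve, and they deliver the clean quadratic count from radius-$r$ data alone. Second is the topological step that the closure of the graph of a bounded-displacement partial bijection is a compact open bisection of $\G_{(X,d)}$ on which source and range restrict to homeomorphisms, so that the one-point-per-fibre estimate is legitimate. The interior case, by contrast, reduces to the isolation of points of $X$ in $\beta X$ together with the elementary behaviour of closures across clopen slices, and is what produces the sharper bound $f(r)$ there.
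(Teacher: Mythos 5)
Your strategy is essentially the right one. Note first that the paper itself contains no proof of this proposition --- it is recalled verbatim from \cite[Proposition 5.8]{AustadOrtegaPalmstrom:PolynomialGrowthAndPropertyRDpForEtaleGroupoids} --- so the comparison is with the argument that the shape of the bound $Mf(r)^2$ suggests, and your proposal matches it: decompose the entourage $E_r$ into boundedly many partial translations and use that their closures are bisections. Your rank-pair decomposition is correct (two pairs in $t_{a,b}$ sharing a first coordinate assign rank $a$ inside the \emph{same} finite ball, hence coincide, and symmetrically), and the key topological step you flag but do not prove is indeed legitimate and has a one-line justification: if $t$ is the graph of a partial bijection $g \colon D \to X$, then $y \mapsto (g(y),y)$ extends continuously to $\overline{D} \cong \beta D$, and by compactness $\overline{t} = \{(\beta g(\omega), \omega) : \omega \in \overline{D}\}$ is again the graph of a map, so the source map is injective on $\overline{t}$. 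The interior case via isolated points and clopen slices is also correct. (Incidentally, K\"onig's edge-colouring theorem applied to the bipartite graph with edge set $E_r$ would decompose $E_r$ into only $f(r)$ partial bijections, improving the boundary bound to $Mf(r)$; your $(\rho,\sigma)$-trick gives the quadratic count the statement asks for.)

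The one genuine gap is your opening containment $B_{(\G_{(X,d)})_\chi}(r) \subseteq \overline{E_r} \cap s^{-1}(\chi)$, which you assert ``by the construction of $\lf_{\beta d}$'' but which does not follow from it. Since $\lf_{\beta d}$ is the continuous extension of $(x,y) \mapsto d(x,y)$, what the construction gives is only $\overline{E_r} \subseteq \{\lf_{\beta d} \leq r\} \subseteq \bigcap_{r' > r} \overline{E_{r'}}$, and the first inclusion can be strict for real-valued metrics: if an extended ULF space contains pairs $(x_n, y_n)$ with $d(x_n,y_n) = r + 1/n$ (placed at infinite distance from one another, say), then any cluster point $\gamma$ of these pairs satisfies $\lf_{\beta d}(\gamma) = r$, so $\gamma \in B_{(\G_{(X,d)})_\chi}(r)$ for $\chi = s(\gamma)$, yet $\gamma \notin \overline{E_r}$ --- indeed $E_r$ may be just the diagonal while $\gamma$ is not a unit. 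So your very first step fails whenever the distance set accumulates at $r$ from above. The repair is cheap: run your count at radius $r' > r$ to get $\vert B_{(\G_{(X,d)})_\chi}(r) \vert \leq f(r')^2$ for every $r' > r$, hence $\leq \inf_{r' > r} f(r')^2$; in the interior case the intersection $\bigcap_{r'>r} \overline{B}(\chi, r') = \overline{B}(\chi,r)$ still recovers the clean bound $f(r)$. The fattened boundary bound coincides with $f(r)^2$ whenever distances take values in a set without accumulation from above (in particular for integer-valued metrics, which covers the graph-theoretic examples the paper invokes), and in general it is all that \cref{cor:growth-coarse-groupoids} needs, since the growth classes appearing there are stable under replacing $r$ by $r+1$. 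You should either insert this $\varepsilon$-fattening explicitly or state the hypothesis under which $\{\lf_{\beta d} \leq r\} = \overline{E_r}$.
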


We immediately obtain the following corollary.
\begin{corollary}\label{cor:growth-coarse-groupoids}
	Let $(X,d)$ be an extended metric space for which $\vert \overline{B}(x,r)\vert \leq f(r)$ for all $x \in X$ and all $r \geq 0$. If there are $C, \alpha > 0$ and $0 < \beta < 1$ for which $f(r) \leq C \exp (\alpha r^\beta)$ for all $r \geq 0$, then $\G_{(X,d)}$ has strong subexponential growth with respect to $\lf_{\beta d}$. If $\lim_{r \to \infty} f(r)^{1/r} = 1$, then $\G_{(X,d)}$ has subexponential growth with respect to $\lf_{\beta d}$. 
\end{corollary}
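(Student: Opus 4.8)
The plan is to read the conclusion off directly from \cref{prop:growth-in-Stone-Cech}, since the hypothesis already bounds every metric ball in $X$ by the single function $f$. First I would observe that in both cases $f$ is finite-valued: in the first because $f(r) \leq C\exp(\alpha r^\beta) < \infty$, and in the second because the limit $\lim_{r \to \infty} f(r)^{1/r} = 1$ presupposes $f(r) < \infty$. Consequently $\sup_{x \in X} |\overline{B}(x,r)| \leq f(r) < \infty$ for every $r$, so $(X,d)$ is uniformly locally finite and \cref{prop:growth-in-Stone-Cech} applies. This produces a constant $M \geq 1$ with
$$ \sup_{\chi \in \G_{(X,d)}^{(0)}} |B_{(\G_{(X,d)})_\chi}(r)| \leq M f(r)^2 $$
for all $r \geq 0$. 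Here I use that each metric ball contains its centre, whence $f(r) \geq 1$, so that the bound $f(r)$ from the case $\chi \in X$ is itself dominated by $M f(r)^2$ and the displayed inequality covers all units at once.

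For the strong subexponential claim I would simply substitute the assumed exponential bound into the above and keep track of the constants. Writing $\lf_{\beta d}$ for the induced length function on $\G_{(X,d)}$, one obtains
$$ \sup_{\chi \in \G_{(X,d)}^{(0)}} |B_{(\G_{(X,d)})_\chi}(r)| \leq M f(r)^2 \leq M C^2 \exp(2 \alpha r^\beta) . $$
Setting $C' := M C^2$ and $\alpha' := 2\alpha$, and retaining the same $\beta \in (0,1)$, this is precisely the defining inequality of strong subexponential growth in \cref{def: (strongly) subexponential growth}. The one point worth flagging is that squaring the exponential merely doubles the coefficient of $r^\beta$ while leaving the exponent $\beta$ untouched; this is exactly why the \emph{strong} subexponential regime (rather than merely subexponential) is preserved under the passage from $X$ to the groupoid.

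For the subexponential claim I would compute the relevant limit superior. From the Proposition, using that $x \mapsto x^{1/r}$ is increasing,
$$ \sup_{\chi} |B_{(\G_{(X,d)})_\chi}(r)|^{1/r} \leq \bigl( M f(r)^2 \bigr)^{1/r} = M^{1/r} \bigl( f(r)^{1/r} \bigr)^2 , $$
and since $M^{1/r} \to 1$ and $f(r)^{1/r} \to 1$ by hypothesis, the right-hand side tends to $1$, so the $\limsup$ is at most $1$. Conversely, each ball $B_{(\G_{(X,d)})_\chi}(r)$ contains the unit $\chi$ because $\lf_{\beta d}(\chi) = 0$, so its cardinality is at least $1$ and the $\limsup$ is at least $1$; combining the two inequalities yields the equality demanded by \cref{def: (strongly) subexponential growth}. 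I do not anticipate any genuine obstacle: the substantive geometric content, namely that crossing to the boundary $\beta X \setminus X$ costs only a square, is entirely absorbed into \cref{prop:growth-in-Stone-Cech}, and what remains is bookkeeping with the constants $M, C, \alpha$ together with two elementary limits.
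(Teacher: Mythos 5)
Your proposal is correct and follows essentially the same route as the paper: both proofs simply feed the hypotheses on $f$ into \cref{prop:growth-in-Stone-Cech} to obtain the uniform bound $M f(r)^2$ over all units (with the squaring only doubling $\alpha$ while preserving $\beta$), and then read off the two growth conditions from \cref{def: (strongly) subexponential growth}. Your version merely supplies details the paper leaves implicit, such as the $f(r)\geq 1$ absorption of the $\chi\in X$ case and the explicit $\limsup$ computation for the subexponential claim, which the paper dismisses with ``follows similarly.''
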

\begin{proof}
	By the assumptions on $f$ we have  $\vert \overline{B}(x,r)\vert \leq C \exp(\alpha r^\beta)$. By \cref{prop:growth-in-Stone-Cech} there exists $M >0$ such that we can guarantee 
	\begin{align*}
		\vert (\G_{(X,d)})_u \vert \leq MC \exp(2\alpha r^\beta)
	\end{align*}
	uniformly in $u$, from which we deduce that $\G_{(X,d)}$ has strong subexponential growth with respect to $\lf_{\beta d}$.
	
	The statement for subexponential growth follows similarly.
\end{proof}

There are several interesting metric spaces with (strong) subexponential growth coming from graph theory, see for example \cite{BondarenkoEtAl2012, KontogeorgiouWinter2022, Lehner2016, MiasnikovSavchuk2015}. By the argument in the proof of \cref{cor:growth-coarse-groupoids}, we deduce that for any one of these examples, the associated coarse groupoid has also (strong) subexponential growth. Moreover, if the original extended metric space has (strong) subexponential growth but not polynomial growth, then the same is true for the associated coarse groupoid. This follows by observing that for any point $x \in X \subset \beta X = \G_{(X,d)}^{(0)}$, we have $\vert B_{(\G_{(X,d)})_x} (r) \vert = \vert \overline{B}(x,r)\vert$.

\subsection{Shift groupoids} \label{subsec: shift groupoid}
Let us start by recalling the definition of a shift space and its associated Renault-Deaconu groupoid.
Fix a finite set $\A$ that we call an \emph{alphabet}. A \emph{path} is a map $x:\mathbb{N}\to \A$, and we denote by $\A^{\mathbb{N}}$ the set of all paths. Given $x \in \A^{\mathbb{N}}$, and $w=(w_1,\cdots,w_n)\in \A^n$, we define the concatenation of a word $w \in \A^n$ with a path $x \in \A^{\N}$ to be the path 
$$ax[i]=\left\lbrace \begin{array}{ll} w_i & \text{if }1\leq i\leq n \\ x[i-n] & \text{ if }i\geq n+1
\end{array}\right. $$ Whenever $x \in \A^{\N}$, by $x[n,n+k]$ we mean the word $(x[n], x[n+1] , \ldots , x[n+k]) \in \A^{k+1}$, for $n \in \N$ and $k \in \N_0$.
We give $\A^{\mathbb{N}}$ the product topology, so that $\A^{\mathbb{N}}$ is a Cantor space, with topology generated by the sets of the form $$Z(w)=\{x \in \A^{\mathbb{N}}: x[i]=w_i \text{ for }1\leq i\leq n\} ,$$ for $w=(w_1,\cdots, w_n)\in \A^n$. 
Define the \emph{shift map}
$\sigma:\A^{\mathbb{N}}\to \A^{\mathbb{N}}$  by $\sigma(x)[n]=x[n+1]$ for every $n\in \mathbb{N}$. A (one-sided) \emph{shift space} is a closed subset $X\subseteq \A^{\mathbb{N}}$ such that $\sigma(X)\subseteq X$.
A \emph{word of length $n$ in }$X$ is an element $w=(w_1,\cdots,w_n)\in \A^n$ such that there exists $x\in X$ and $k\in \mathbb{N}$ such that $x[k+i-1]=w_i$ for $1\leq i\leq n$. We denote by $\mathcal{L}_n(X)$ the set of all words of length $n$ in $X$, and $\mathcal{L}_*(X)=\bigcup_{n\in \mathbb{N}}\mathcal{L}_n(X)$ the set of all words of $X$. We define the \emph{complexity function of} $X$ to be the function $p_X(n)=\sum_{i=1}^n  |\mathcal{L}_i(X)|$. 
The \emph{Renault-Deaconu groupoid} associated with the shift space $\sigma:X\to X$ is the groupoid
$$\mathcal{G}_X:=\{(x,m-n,y)\in X\times\mathbb{Z}\times X: \sigma^m(x)=\sigma^n(y)\}\,.$$
with multiplication $(x,m-n,y) \cdot (y,k-l,z) = (x, m+k - (n+l) , z)$, inversion $(x,m-n,y)^{-1} = (y,n-m,x)$, and unit space $\mathcal{G}^{(0)}_X = \{ (x,0,x) \colon x \in X \}$ identified with $X$. 
The topology has as basis sets of the form 
$$Z(u,v):=\{(x,m-n,y)\in \mathcal{G}_X: \sigma^m(x)=\sigma^n(y) \text{ for }x\in Z(u)\text{ and }y\in Z(v)\}  ,$$ for $u\in \mathcal{L}_m(X)$ and $v\in \mathcal{L}_n(X)$
We allow $u$ and $v$ to be the empty word, in which case 
$$Z(u,\emptyset):=\{(x,m,y)\in \mathcal{G}_X: \sigma^m(x)=y \text{ for }x\in Z(u)\text{ and }y\in X\} ,$$ for $u\in \mathcal{L}_m(X)$ 
and 
$$Z(\emptyset,v):=\{(x,-n,y)\in \mathcal{G}_X: \sigma^n(y)=x \text{ for }x\in X\text{ and }y\in Z(v)\} ,$$ for $v\in \mathcal{L}_n(X)$.
The shift map $\sigma$ is always locally injective. It is a local homeomorphism if and only if $X$ is a shift of \emph{finite type}, meaning that there  exists $K \in \N$ and a subset $\mathcal{F}\subseteq \mathcal{A}^K$, called the forbidden words, such that for every $x\in X$ and $n\in \N$ we have that $x[n,n+K-1]\notin\mathcal{F}$. In this case, observe that $\mathcal{F}=\mathcal{A}^K\setminus \mathcal{L}_K(X)$. A shift is of \emph{infinite type} if it is not of finite type. Given a shift space $X$, the associated Renault-Deaconu groupoid $\mathcal{G}_X$ is étale if $X$ is a shift of finite type, and r-discrete if $X$ is a shift of infinite type.

Now, observe that the set 
\begin{equation} \label{eq: generating set for shift groupoids}
	S:=\bigcup_{a\in \A} Z(a,\emptyset) \cup  Z(\emptyset,a)
\end{equation} 
is a generating set for $\mathcal{G}_X$. Indeed, given $g=(x,m-n,y)\in \mathcal{G}_X$, we have that 
$$(x,m-n,y)=(x,1,\sigma (x))(\sigma (x),1,\sigma^2 (x))\cdots (\sigma^{m-1}(x),1,\sigma^m (x))(\sigma^n (y),-1,\sigma^{n-1} (y))\cdots (\sigma (y),-1,y) ,$$
and therefore,
$$g\in Z(x[1],\emptyset)\cdots Z(x[m],\emptyset)Z(\emptyset,y[n])\cdots Z(\emptyset,y[1])\,.$$
We denote by $\lf_S:\mathcal{G}_X\to \mathbb{N}\cup \{0\}$ the associated length function, which we recall is given by $\lf (x) = 0$ for $x \in X$, and $$ \lf (g) = \inf\{k \in \N \colon g \in S^k \} ,$$ when $g \in \G_X \setminus X$.

Now let $x \in X$ and define $W_x (n) :=\{g\in (\mathcal{G}_X)_x: \lf_S (g)=n\}$, for $n \in \N_0$. First observe that 
$$W_x(1)=\{(ax,1,x): a\in \mathcal{L}_1(X)\text{ such that }ax\in X\}\cup \{(\sigma (x),-1,x)\}\,,$$
therefore $|W_x(1)|\leq p_X(1)+1$. Moreover,

\begin{align*}W_x(2) & =\{(wx,2,x): w\in \mathcal{L}_2(X)\text{ such that }wx\in X\}\cup \\
	& \cup \{(a\sigma (x),0,x):a\in \mathcal{L}_1(X)\text{ such that }a\sigma (x)\in X\}\cup \{(\sigma^2 (x),-2,x)\}\,,\end{align*}
and therefore we have that $|W_x(2)|\leq p_X(2)+p_X(1)+1$, and in general,
$$|W_x(n)|\leq 1+ \sum_{k=1}^n p_{X}(k)\leq 1+np_X(n)\,,$$ for $n \in \N$, so that 
$$ |B_{(\G_X)_x} (n)| = 1 + \sum_{k = 1}^{n} |W_x (k)| \leq 1 + n + n^2 p_X (n) .$$

It follows immediately from this that if the complexity function $p_X$ has (strong) subexponential growth, then the function $n \mapsto \sup_{x \in X} |B_{(\G_X)_x} (n)|$ also has (strong) subexponential growth. Let us record this result for future reference.

\begin{lemma} \label{lem: subexponential complexity function gives subexponential growth groupoid}
	Let $X$ be a shift and $\G_X$ the associated Renault-Deaconu groupoid. If the complexity function has (strong) subexponential growth, then the groupoid $\G_X$ has (strong) subexponential growth. 
\end{lemma}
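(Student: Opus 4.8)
The plan is to verify the desired growth bound directly from the estimate already established immediately before the statement, namely that for each unit $x \in X$ one has
\begin{equation*}
	|B_{(\G_X)_x} (n)| \leq 1 + n + n^2 p_X (n) ,
\end{equation*}
for all $n \in \N$. The entire content of \cref{lem: subexponential complexity function gives subexponential growth groupoid} is that multiplying the complexity function by a polynomial factor (here $n^2$) and adding lower-order polynomial terms preserves both the class of strong subexponential growth and the class of subexponential growth. Since \cref{def: (strongly) subexponential growth} is phrased in terms of $\sup_{u} |B_{\G_u}(t)|$ over all units, I would first note that for the Renault-Deaconu groupoid the unit space is $X$ itself, and that the bound above is uniform in $x$, so that $\sup_{x \in X} |B_{(\G_X)_x}(n)| \leq 1 + n + n^2 p_X(n)$.

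For the strong subexponential case I would assume $p_X$ satisfies $p_X(n) \leq C \exp(\alpha n^\beta)$ for some $C > 0$, $\alpha > 0$ and $\beta \in (0,1)$. Substituting into the displayed bound gives $\sup_{x} |B_{(\G_X)_x}(n)| \leq 1 + n + C n^2 \exp(\alpha n^\beta)$, and the task reduces to absorbing the polynomial prefactor $1 + n + C n^2$ into a slightly larger exponential of the same sub-one power. The clean way to do this is to fix any $\beta'$ with $\beta < \beta' < 1$ and observe that $n^2 = \exp(2 \log n) \leq C' \exp(n^{\beta'})$ for a suitable constant $C'$, since $\log n$ grows slower than any positive power of $n$; choosing $\beta'$ as the new exponent and $\alpha + 1$ (say) as the new constant, one obtains a bound of the form $C'' \exp(\alpha'' n^{\beta''})$ with $\alpha'' > 0$ and $\beta'' \in (0,1)$, as required. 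One also has to pass from integer $n$ to arbitrary real $t \geq 0$, which is immediate since $|B_{(\G_X)_x}(t)|$ is constant on each interval $[\lfloor t \rfloor, \lfloor t \rfloor + 1)$ by integrality of $\lf_S$, so bounding at $\lceil t \rceil$ suffices at the cost of adjusting constants.

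For the subexponential case I would assume $\limsup_{n \to \infty} \sup_{x} p_X(n)^{1/n} = 1$ and compute
\begin{equation*}
	\limsup_{n \to \infty} \left( 1 + n + n^2 p_X(n) \right)^{1/n} = 1 ,
\end{equation*}
using that $(n^2)^{1/n} \to 1$ and $(1+n)^{1/n} \to 1$, together with the elementary fact that a finite sum of nonnegative sequences has $n$-th-root $\limsup$ equal to the maximum of the individual $n$-th-root $\limsup$s, each of which is $1$ here. This yields subexponential growth of $\G_X$ with respect to $\lf_S$.

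I expect no serious obstacle in this argument; it is essentially a bookkeeping exercise in showing that the classes of strong subexponential and subexponential growth are stable under multiplication by a polynomial. The only point requiring a little care is the strong subexponential case, where one cannot keep the same exponent $\beta$ after multiplying by $n^2$ and must instead pass to a marginally larger exponent $\beta' < 1$; I would make sure to state this explicitly so the reader sees why the polynomial factor forces a harmless increase in $\beta$ rather than breaking the estimate. Everything else follows by direct substitution into \cref{def: (strongly) subexponential growth}.
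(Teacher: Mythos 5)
Your proof is correct and follows essentially the same route as the paper, which likewise deduces the lemma immediately from the uniform bound $\sup_{x \in X} |B_{(\G_X)_x}(n)| \leq 1 + n + n^2 p_X(n)$ established just before the statement, together with the observation that polynomial prefactors preserve both growth classes. One minor remark: in the strong subexponential case it is not actually necessary to enlarge the exponent $\beta$ --- since $n^2 e^{-\epsilon n^{\beta}} \to 0$, one may keep $\beta$ and simply replace $\alpha$ by $\alpha + \epsilon$ (the definition only asks for the existence of \emph{some} admissible pair) --- though your variant passing to $\beta' \in (\beta, 1)$ is equally valid.
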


If $X$ is a shift of finite type, then it is a standard result that $X$ is conjugate to a shift space associated with an infinite path space of a finite directed graph \cite[Section 2.2]{LindMarcus:SymbolicDynamics}.  Then combining this with \cite[Section 5.3]{AustadOrtegaPalmstrom:PolynomialGrowthAndPropertyRDpForEtaleGroupoids} we have the following. 

\begin{proposition} \label{prop: groupoids from shifts of finite type are graph groupoids and hence have the same dichotomy of growth}
	If $X$ is a shift of finite type, then there is a finite directed graph $E_X$ such that $\G_{E_X} \cong \G_X$. Consequently, with respect to the natural length function, the Renault-Deaconu groupoid $\G_{X}$ is either of exponential or polynomial growth.
	   
\end{proposition}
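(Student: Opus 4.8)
The plan is to establish the graph realization by the standard recoding of symbolic dynamics and then to deduce the growth dichotomy from the corresponding result for graph groupoids. For the first assertion, suppose $X$ is a shift of finite type, so that its forbidden words may be taken to lie in $\A^K$ for some $K \geq 2$. Passing to the higher-block presentation $X^{[K-1]}$, whose alphabet is $\mathcal{L}_{K-1}(X)$ and whose admissible consecutive pairs are exactly those overlapping $(K-1)$-blocks whose union is an allowed $K$-block, yields a one-step shift of finite type that is conjugate to $X$ through the sliding block code. A one-step shift of finite type is precisely the one-sided edge shift of the finite directed graph $E_X$ whose vertices are the allowed $(K-1)$-blocks and which has an edge for each admissible transition; this is the content of \cite[Section 2.2]{LindMarcus:SymbolicDynamics}. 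We thus obtain a conjugacy $\phi \colon X \to X_{E_X}$, i.e.\ a homeomorphism satisfying $\phi \circ \sigma_X = \sigma_{E_X} \circ \phi$.

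Next I would lift $\phi$ to a groupoid isomorphism $\Phi \colon \G_X \to \G_{E_X}$ by $\Phi(x,k,y) = (\phi(x), k, \phi(y))$. Since $\phi$ intertwines the shift maps, $\sigma_X^m(x) = \sigma_X^n(y)$ holds if and only if $\sigma_{E_X}^m(\phi(x)) = \sigma_{E_X}^n(\phi(y))$, so $\Phi$ is well defined and preserves the $\Z$-valued degree cocycle; it is plainly multiplicative and bijective, and a homeomorphism because $\phi$ is. As $X$ and $X_{E_X}$ are both shifts of finite type, $\G_X$ and $\G_{E_X}$ are étale, so $\Phi$ is an isomorphism of étale groupoids. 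This proves $\G_{E_X} \cong \G_X$.

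For the consequence, I would transport the growth type along $\Phi$. Both groupoids are compactly generated, with natural length functions $\lf_S$ arising from compact generating sets, and $\Phi$ carries the compact generating set of $\G_X$ to one of $\G_{E_X}$. Because any two length functions of a compactly generated étale groupoid coming from compact generating sets are coarsely equivalent, the growth type is an isomorphism invariant, so it suffices to settle the dichotomy for $\G_{E_X}$. This is supplied by \cite[Section 5.3]{AustadOrtegaPalmstrom:PolynomialGrowthAndPropertyRDpForEtaleGroupoids}: the number of paths of length $n$ in $E_X$ equals $\mathbf{1}^{T} A^n \mathbf{1}$ for the non-negative integer adjacency matrix $A$, and by Perron-Frobenius theory this grows like $n^d \rho^n$, which is polynomial when the spectral radius satisfies $\rho \le 1$ and exponential when $\rho > 1$, with no intermediate regime. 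Together with \cref{lem: subexponential complexity function gives subexponential growth groupoid} and the bound $|B_{(\G_X)_x}(n)| \le 1 + n + n^2 p_X(n)$ established above, this yields the dichotomy for $\G_X$.

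The step I expect to be most delicate is this transport of growth, since the recoding changes the alphabet and hence the generating set, and one must confirm that the growth type survives. This rests on the observation that the natural length functions all come from compact generating sets and are therefore coarsely equivalent—the groupoid analogue of the change-of-generators invariance of growth for finitely generated groups—so that polynomial, subexponential and exponential growth are genuine isomorphism invariants for compactly generated étale groupoids. Everything else reduces to the classical recoding theorem and the Perron-Frobenius dichotomy for non-negative integer matrices.
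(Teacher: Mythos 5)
Your proposal is correct and follows essentially the same route as the paper, whose entire proof consists of the two citations you invoke: the standard recoding of a shift of finite type to the edge shift of a finite directed graph via \cite[Section 2.2]{LindMarcus:SymbolicDynamics}, followed by the polynomial/exponential growth dichotomy for graph groupoids from \cite[Section 5.3]{AustadOrtegaPalmstrom:PolynomialGrowthAndPropertyRDpForEtaleGroupoids}. The details you supply beyond this---the higher-block construction, the induced groupoid isomorphism $\Phi(x,k,y)=(\phi(x),k,\phi(y))$, and the transport of growth type via the (here linear, since both generating sets are compact and open so the length functions are locally bounded) equivalence of word-length functions---are exactly the steps the paper leaves implicit, and they are sound.
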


Since we are interested in exhibiting examples of groupoids associated with shifts whose growth is strongly subexponential and not polynomial, we see from \cref{prop: groupoids from shifts of finite type are graph groupoids and hence have the same dichotomy of growth} that it is natural to look for examples among the shifts of \emph{infinite} type. The shift that will turn out to give our desired example is one that we call the \emph{ordered prime shift}, which we define next: The alphabet is $\mathcal{A} = \{0,1\}$, and we let 
$$\mathcal{F} := \left\{ 1 0^n 1 \colon n \in \N_0 \text{ is not a prime} \right\} \cup \left\{ 10^{p_1}10^{p_2}1 \ldots 10^{p_r}1 \colon r \geq 2 \, \, p_i \geq p_{i+1} \text{ for some } i \right\} ,$$ be the \emph{forbidden words}. The associated shift $X := X_{\mathcal{F}}$ is the space of all paths consisting of \emph{admissible words}, that is, the paths $x$ such that for any $n \in \N$ and $k \in \N_0$, the word $x[k,n+k]$ is not a forbidden word. Notice that any admissible word has one of three forms: 

\begin{equation} \label{eq: only zeros}
	0^k \text{ for } k \in \N_0 ;
\end{equation}

\begin{equation} \label{eq: one in the middle}
	0^k 1 0^l \text{ for } l,k \in \N_0 ;
\end{equation}

\begin{equation} \label{eq: primes in the middle}
	0^k 1 0^{p_1} 1 0^{p_2} 1 \ldots 0^{p_r} 1 0^l \text{ for } k,l \in \N_0 \, , \, r \in \N \, , p_i \text{ prime } \forall i \text{ and } p_i < p_{i+1} \, \, \forall i.
\end{equation}

\begin{lemma} \label{lem: complexity function of prime shift is of intermediate growth}
	The complexity function $p_X$ of the ordered prime shift $X$ has strong subexponential growth, but not polynomial growth.
\end{lemma}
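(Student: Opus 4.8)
The plan is to estimate the complexity function $p_X(n) = \sum_{i=1}^n |\mathcal{L}_i(X)|$ by counting admissible words of each length, and then show this count grows like $\exp(\alpha n^\beta)$ for suitable $\beta \in (0,1)$, while ruling out polynomial growth. Since every admissible word has one of the three forms \eqref{eq: only zeros}, \eqref{eq: one in the middle}, or \eqref{eq: primes in the middle}, the counting reduces to a combinatorial problem about words of the shape $0^k 1 0^{p_1} 1 \cdots 0^{p_r} 1 0^l$ with $p_1 < p_2 < \cdots < p_r$ all prime. The forms \eqref{eq: only zeros} and \eqref{eq: one in the middle} contribute only polynomially many words of each length, so the growth is governed entirely by \eqref{eq: primes in the middle}.

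First I would reduce the count to a cleaner enumerative quantity. A word of length $n$ of the form \eqref{eq: primes in the middle} consumes $r$ ones, a prefix $0^k$ and suffix $0^l$, and interior blocks $0^{p_1}, \ldots, 0^{p_r}$; once the \emph{set} of primes $\{p_1 < \cdots < p_r\}$ is fixed, the boundary gaps $k, l$ only range over polynomially (in $n$) many choices. Hence up to a polynomial factor, $|\mathcal{L}_n(X)|$ is comparable to the number of strictly increasing sequences of primes whose total $1 + p_1 + p_2 + \cdots + p_r + r$ (length of the core) is at most $n$. The core observation — this is the input from analytic number theory, following Nathanson \cite{Nathanson:NumberTheoryAndSemigroupsOfIntermediateGrowth} — is that the number of ways to write an integer $m \le n$ as a sum of \emph{distinct primes} has subexponential but superpolynomial growth. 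Thus the plan is to prove the two-sided bound: a lower bound showing superpolynomial growth, and an upper bound of the form $\exp(\alpha n^\beta)$.

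For the \emph{upper bound}, I would argue that if $p_1 < \cdots < p_r$ are distinct primes summing to at most $n$, then $r$ is constrained: since the $r$ smallest primes already sum to something of order $r^2 \log r$ by the prime number theorem, we must have $r = O(\sqrt{n/\log n})$, and more importantly the number of such subsets is bounded by the partition-type estimate $\exp(O(\sqrt{n}))$. Making this precise gives $p_X(n) \le C \exp(\alpha n^{1/2})$, so $\beta = 1/2$ works for strong subexponential growth. For the \emph{lower bound} (to defeat polynomial growth), I would exhibit exponentially-in-$\sqrt{n}$ many distinct admissible words: take the first $r \approx c\sqrt{n/\log n}$ primes and form all $2^r$ subsets, padding with zeros to reach a common length; the number of subsets $2^r = \exp(c' \sqrt{n/\log n})$ already exceeds any polynomial in $n$.

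The \textbf{main obstacle} I anticipate is the analytic number theory in the upper bound: controlling the number of subsets of distinct primes with bounded sum requires an estimate on the partition function into distinct primes, which is where the classical tools (the prime number theorem, and Hardy–Ramanujan–type asymptotics for partitions into distinct parts from a sparse sequence) enter. The cleanest route is to compare with partitions into distinct integers, for which $q(m) \sim \exp(\pi\sqrt{m/3})$ is known, and to transfer this bound to partitions into primes using the density of primes; getting the exponent $\beta$ and constant $\alpha$ right — and verifying that the polynomial prefactors from the boundary gaps $k,l$ do not disturb the subexponential regime — is the delicate part, whereas the lower bound and the conclusion via \cref{lem: subexponential complexity function gives subexponential growth groupoid} are comparatively routine.
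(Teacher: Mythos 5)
Your plan coincides with the paper's proof in all essentials: the same three-form decomposition with forms \eqref{eq: only zeros} and \eqref{eq: one in the middle} contributing only polynomially, the same lower bound via all $2^{\pi(\cdot)}$ subsets of small primes combined with Chebyshev's estimate \eqref{eq: Chebyshevs inequality for primes}, and the same upper bound via a partition-function estimate of the form $\exp(O(\sqrt{n}))$ absorbing the polynomial prefactors from the boundary gaps. The one obstacle you flag---transferring distinct-part partition asymptotics to partitions into primes---dissolves in the paper's treatment: subsets of distinct primes with bounded sum simply inject into unrestricted partitions, so the crude Hardy--Ramanujan bound $p(k) \leq e^{B\sqrt{k}}$ of \eqref{eq: Hardy-Ramanujan partition bounds} suffices with no density argument needed.
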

There are two key tools to proving the above. First, Chebyshev's theorem which gives a useful estimate for the map $x \mapsto \pi(x)$ that counts the number of prime numbers not exceeding $x \in \R_{\geq 2}$ (see \cite[Theorem 6.3]{Nathanson:AdditivNumberTheory}): there exists constants $c_1 , c_2 > 0$ such that for all $x \in \R_{\geq 2}$,
\begin{equation} \label{eq: Chebyshevs inequality for primes}
	\frac{c_1 x}{\ln(x)} \leq \pi(x) \leq \frac{c_2 x}{\ln (x)}.
\end{equation}
Second, the following estimate for the Hardy-Ramanujan partition function (see \cite{HardyRamanujan:AsymptoticFormulaeInCombinatoryAnalysis}): there exists $A,B > 0$ such that for all sufficiently large $n$, 
\begin{equation} \label{eq: Hardy-Ramanujan partition bounds}
	e^{A \sqrt{n}} \leq p(n) \leq e^{B \sqrt{n}} ,
\end{equation}
where we recall that $p(n)$ counts the number of unrestricted partitions of the positive integer $n$.

\begin{proof} [Proof of \cref{lem: complexity function of prime shift is of intermediate growth}]
	First we show that $p_X$ cannot have polynomial growth. Consider the words of the form $$10^{p_1} 1 0^{p_2} 1 \ldots 0^{p_r} 1,$$ where $p_i$ are all distinct primes in increasing order which does not exceed $\sqrt{n/2}$. Such a word has length $$ r + 1 + \sum_{i = 1}^{r} p_i \leq r+1 + r\sqrt{n/2} \leq n/2 + \sqrt{n/2} \pi(\sqrt{n/2}) \leq n/2 + \frac{2 c_2 n/2}{\log(n/2)} \leq n ,$$ for all large $n$, where we have used \cref{eq: Chebyshevs inequality for primes}. Let $\{p_1 , \ldots , p_{\pi(\sqrt{n/2})}\}$ be the collection of primes not exceeding $\sqrt{n/2}$. As we saw above, any sub-collection gives a unique word as above of length less than or equal to $n$. Thus, using \cref{eq: Chebyshevs inequality for primes}, we have $$ p_X (n) \geq \# \text{ subsets of } \{p_1 , \ldots , p_{\pi(\sqrt{n/2})}\} = 2^{\pi(\sqrt{n/2})} \geq 2^{\frac{2c_1 \sqrt{n/2}}{\log(n/2)}} ,$$ so that $p_X $ cannot be dominated by any polynomial. 
	
	Next we show that $p_X$ has strong subexponential growth. For this we will use some rough estimates on the number of elements of length at most $n$. Recall that any word has one of the three forms \cref{eq: only zeros}, \cref{eq: one in the middle} and \cref{eq: primes in the middle}. There are $n+1$ words of the form \cref{eq: only zeros} with length at most $n$, and there are at most $n^2$ words of the form \cref{eq: one in the middle} with length at most $n$. Let us then consider the words of the form \cref{eq: primes in the middle}. Fix any $1 \leq k \leq n$ and consider the set of all primes $\{p_1 , \ldots , p_{\pi(k)} \}$ not exceeding $k$. A sub-collection $\{p_{i_1} , \ldots , p_{i_r}\}$ that is arranged in increasing order gives a unique element $1 0^{p_{i_{1}}} 1 0^{p_{i_{2}}} 1 \ldots 1 0^{p_{i_{r}}} 1$ of length possibly less than $k$. 
	There are $p(k)$ unrestricted partitions of $k$ into a sum of non-negative integers, and hence there are at most $p(k - (r+1)) \leq p(k)$ elements of the form $1 0^{p_{i_1}} 1 0^{p_{i_2}} 1 \ldots 1 0^{p_{i_r}} 1$ such that $\sum_j p_{i_j} + r + 1 = k$. For every such element $1 0^{p_{i_1}} 1 0^{p_{i_2}} 1 \ldots 1 0^{p_{i_r}} 1$, there are at most $n^2$ elements of the form $0^t 1 0^{p_{i_1}} 1 0^{p_{i_2}} 1 \ldots 1 0^{p_{i_r}} 1 0^l$ with $0 \leq t+l \leq n-k$. Therefore, the number of elements of the form \cref{eq: primes in the middle} with length at most $n$ is bounded by $$\sum_{k = 1}^{n} n^2 p(k) \leq n^3 p(n) .$$ This together with \cref{eq: Hardy-Ramanujan partition bounds} implies that there exist constants $C , \alpha > 0$ large enough such that $$ p_X (n) \leq n+1 + n^2 + n^3 p(n) \leq 4n^3 p(n) \leq C e^{\alpha \sqrt{n}} , $$ for all $n \in \N$. This proves the lemma.
\end{proof}

\begin{proposition} \label{prop: groupoid associated to the prime shift is of subexponential growth}
	With respect to its canonical length function, the Renault-Deaconu groupoid associated with the ordered prime shift has strong subexponential growth, and is not of polynomial growth.
	\begin{proof}
		Let $\G_X$ denote the groupoid associated with the ordered prime shift $X$. Combining \cref{lem: subexponential complexity function gives subexponential growth groupoid} with \cref{lem: complexity function of prime shift is of intermediate growth}, we see that $\G_X$ has strong subexponential growth. Let us show next that it is not of polynomial growth. For this, let $x \in X$ be the path $x = 0^{\infty}$. Concatenating any admissible word $w$ with $x$ yields another element $wx \in X$, and it therefore follows that $$ | B_{(\G_X)_x} (n) | \geq p_X (n) .$$ Since the complexity function $p_X $ is not dominated by any polynomial, the same must be true for the function $n \mapsto | B_{(\G_X)_x} (n) |$, hence the result. 
	\end{proof}
\end{proposition}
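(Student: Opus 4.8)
The statement splits into two claims, and the plan is to handle them by completely different means. The strong subexponential growth is immediate: \cref{lem: complexity function of prime shift is of intermediate growth} tells us that the complexity function $p_X$ of the ordered prime shift has strong subexponential growth, and \cref{lem: subexponential complexity function gives subexponential growth groupoid} transfers (strong) subexponential growth of the complexity function to the associated Renault-Deaconu groupoid. Feeding the former into the latter gives strong subexponential growth of $\G_X$ with respect to the canonical length function $\lf_S$ at once.

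For the second claim the transfer lemma is useless, since it only produces an \emph{upper} bound for $\sup_u |B_{(\G_X)_u}(n)|$ in terms of $p_X$; to contradict polynomial growth I instead need a \emph{lower} bound, which I would build by hand by embedding all admissible words into a single fiber. The natural base unit is $x := 0^\infty$. It lies in $X$ (each $0^k$ is of the form \eqref{eq: only zeros}), it is a fixed point of the shift, and --- crucially --- appending zeros to an admissible word never creates a forbidden word, because every forbidden word ends in the letter $1$ and so cannot straddle the junction into the all-zero tail. Hence for any admissible word $w$ of length $i$ the concatenation $wx$ again lies in $X$ and satisfies $\sigma^i(wx) = x$, so that $g_w := (wx, i, x)$ is a genuine element of $(\G_X)_x$. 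Factoring $g_w$ through the forward generators exactly as in the decomposition preceding \eqref{eq: generating set for shift groupoids} places it in $S^i$, whence $\lf_S(g_w) \leq i$.

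It then remains to see that $w \mapsto g_w$ is injective on $\mathcal{L}_*(X)$: the middle coordinate of $g_w$ records the length $i$ of $w$, while its first coordinate $wx$ records the letters of $w$ (two distinct words of equal length already differ in some position, and this is inherited by $wx$). Counting the words of length at most $n$ therefore yields $|B_{(\G_X)_x}(n)| \geq p_X(n)$, and since \cref{lem: complexity function of prime shift is of intermediate growth} guarantees that $p_X$ is dominated by no polynomial, neither is $n \mapsto \sup_u |B_{(\G_X)_u}(n)| \geq |B_{(\G_X)_x}(n)|$, ruling out polynomial growth. The only genuinely delicate point in the whole argument is the admissibility-and-injectivity bookkeeping around the base point; the choice $x = 0^\infty$ is what makes it clean, and everything else is a direct appeal to the two preceding lemmas.
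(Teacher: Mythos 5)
Your proposal is correct and follows essentially the same route as the paper's proof: the first claim is obtained by combining \cref{lem: subexponential complexity function gives subexponential growth groupoid} with \cref{lem: complexity function of prime shift is of intermediate growth}, and the second by fixing $x = 0^{\infty}$ and injecting the admissible words into the fiber $(\G_X)_x$ to get $|B_{(\G_X)_x}(n)| \geq p_X(n)$. The extra details you supply --- admissibility of $wx$ because every forbidden word ends in $1$, injectivity of $w \mapsto (wx, |w|, x)$ via the middle and first coordinates, and the length bound $\lf_S((wx,|w|,x)) \leq |w|$ from the generator decomposition --- are precisely the bookkeeping the paper's proof leaves implicit.
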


As already remarked, the Renault-Deaconu groupoid associated with the ordered prime shift is not an étale groupoid, but an r-discrete groupoid. To obtain an étale groupoid with the same growth, we can apply recent results of Brix, Hume and Li in \cite{BrixHumeLi:MinimalCovers}. Therein, given an r-discrete groupoid $\G$, the authors associate an étale groupoid $\hat{\G}$ called the \emph{cover} of $\G$. The cover groupoid can be viewed as a transformation groupoid built from a groupoid action of $\G$ on a locally compact Hausdorff space $\hat{X}$, so $\hat{\G} = \G \ltimes \hat{X}$ (see \cite[Proposition 7.5]{BrixHumeLi:MinimalCovers}). When $\G$ is the Renault-Deaconu groupoid associated with a shift of infinite type, we endow $\hat{\G}$ with the length function $\lf \colon \hat{\G} \to \R_{\geq 0}$ given by $\lf (g,x) = \lf_S (g)$, where $\lf_S$ is the length function on $\G$ induced by the generating set $S$ as in \cref{eq: generating set for shift groupoids}. With this choice of length function, it is clear that $\hat{\G}$ and $\G$ share the same growth properties, and hence we arrive at a desired example.

\begin{theorem} \label{thm: étale groupoid built from prime shift}
	Let $\G_X$ denote the Renault-Deaconu groupoid associated with the ordered prime shift $X$, and let $\hat{\G}_X$ denote its cover groupoid. Then $\hat{\G}_X$ is an étale groupoid that has strong subexponential growth, but not polynomial growth.
\end{theorem}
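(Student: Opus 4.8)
The plan is to reduce everything to the growth computation already carried out for the Renault-Deaconu groupoid $\G_X$ in \cref{prop: groupoid associated to the prime shift is of subexponential growth}, and then transport it to the cover using the structural description of $\hat{\G}_X$ furnished by \cite{BrixHumeLi:MinimalCovers}. First I would record that, since $X$ is a shift of infinite type, $\G_X$ is r-discrete, so by \cite[Proposition 7.5]{BrixHumeLi:MinimalCovers} the cover $\hat{\G}_X$ is an étale groupoid realized as a transformation groupoid $\hat{\G}_X = \G_X \ltimes \hat{X}$ for a groupoid action of $\G_X$ on a locally compact Hausdorff space $\hat{X}$, with surjective moment map $\rho \colon \hat{X} \to X = \G_X^{(0)}$. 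Equipping $\hat{\G}_X$ with the length function $\lf(g,x) = \lf_S(g)$ as in the discussion preceding the statement, I would note that $\lf$ is locally bounded, being the composition of the continuous projection $\hat{\G}_X \to \G_X$ with the locally bounded length function $\lf_S$, hence bounded on compact subsets of $\hat{\G}_X$.

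The key step is a fiberwise comparison of source fibers. For a unit $\hat{u} = x \in \hat{X}$, the elements of the source fiber $(\hat{\G}_X)_x$ are precisely the pairs $(g,x)$ with $g \in (\G_X)_{\rho(x)}$, and the coordinate projection $(g,x) \mapsto g$ is a bijection $(\hat{\G}_X)_x \to (\G_X)_{\rho(x)}$, since distinct groupoid elements give distinct pairs. Because $\lf(g,x) = \lf_S(g)$, this bijection is length-preserving, so
$$ \big| B_{(\hat{\G}_X)_x}(t) \big| = \big| B_{(\G_X)_{\rho(x)}}(t) \big| $$
for every $x \in \hat{X}$ and every $t \geq 0$. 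Taking the supremum over $x \in \hat{X}$ and using surjectivity of $\rho$, the growth function of $\hat{\G}_X$ with respect to $\lf$ coincides with that of $\G_X$ with respect to $\lf_S$.

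With this identity in hand the conclusion is immediate. Strong subexponential growth of $\hat{\G}_X$ follows directly from the bound $\sup_{u \in X} |B_{(\G_X)_u}(t)| \leq C \exp(\alpha t^\beta)$ supplied by \cref{prop: groupoid associated to the prime shift is of subexponential growth}. For the failure of polynomial growth, I would choose $\hat{u} \in \hat{X}$ with $\rho(\hat{u}) = 0^\infty$ (possible since $\rho$ is surjective and $0^\infty \in X$); then $|B_{(\hat{\G}_X)_{\hat{u}}}(n)| = |B_{(\G_X)_{0^\infty}}(n)| \geq p_X(n)$ for all $n \in \N$, and since $p_X$ is not dominated by any polynomial, neither is the growth of $\hat{\G}_X$. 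The only genuine obstacle I anticipate is pinning down the precise conventions of the cover construction in \cite{BrixHumeLi:MinimalCovers}---in particular the transformation-groupoid presentation and the surjectivity of $\rho$---so as to justify the length-preserving bijection of source fibers; once these are settled, the transfer of growth and hence the theorem are routine.
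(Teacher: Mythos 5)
Your proposal is correct and follows essentially the same route as the paper: the paper likewise invokes \cite[Proposition 7.5]{BrixHumeLi:MinimalCovers} to present $\hat{\G}_X$ as $\G_X \ltimes \hat{X}$, equips it with the pulled-back length function $\lf(g,x) = \lf_S(g)$, and concludes that $\hat{\G}_X$ and $\G_X$ share the same growth, so the theorem follows from \cref{prop: groupoid associated to the prime shift is of subexponential growth}. Your fiberwise length-preserving bijection $(\hat{\G}_X)_x \to (\G_X)_{\rho(x)}$ and the use of surjectivity of the moment map to hit $0^\infty$ simply make explicit what the paper dismisses as ``clear,'' which is a welcome addition rather than a deviation.
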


\printbibliography

@article {GardellaThiel:BanachAlgebrasGeneratedByAnInvertibleIsometryOfAnLpSpace,
	AUTHOR = {Gardella, E. and Thiel, H.},
	TITLE = {Banach algebras generated by an invertible isometry of an
	{$L^p$}-space},
	JOURNAL = {J. Funct. Anal.},
	FJOURNAL = {Journal of Functional Analysis},
	VOLUME = {269},
	YEAR = {2015},
	NUMBER = {6},
	PAGES = {1796--1839},
	ISSN = {0022-1236,1096-0783},
	MRCLASS = {46J10 (22D20 37A55 47L10)},
	MRNUMBER = {3373434},
	MRREVIEWER = {H.\ G.\ Dales},
}

@article {GardellaThiel:ExtendingRepresentationsOfBanachAlgebrasToTheirBiduals,
	AUTHOR = {Gardella, E. and Thiel, H.},
	TITLE = {Extending representations of {B}anach algebras to their
	biduals},
	JOURNAL = {Math. Z.},
	FJOURNAL = {Mathematische Zeitschrift},
	VOLUME = {294},
	YEAR = {2020},
	NUMBER = {3-4},
	PAGES = {1341--1354},
	ISSN = {0025-5874,1432-1823},
	MRCLASS = {46H15 (46E30 46L05 47L10)},
	MRNUMBER = {4074042},
	MRREVIEWER = {Dinesh\ Jayantilal\ Karia},
}

@article {HetlandOrtega:RigidityOfTwistedGroupoidLpOperatorAlgebras,
	AUTHOR = {Hetland, E. V. and Ortega, E.},
	TITLE = {Rigidity of twisted groupoid {$L^p$}-operator algebras},
	JOURNAL = {J. Funct. Anal.},
	FJOURNAL = {Journal of Functional Analysis},
	VOLUME = {285},
	YEAR = {2023},
	NUMBER = {6},
	PAGES = {Paper No. 110037, 45},
	ISSN = {0022-1236,1096-0783},
	MRCLASS = {22D20 (22D25 43A15 47B01)},
	MRNUMBER = {4601857},
	MRREVIEWER = {Xiao\ Chen},
}

@article{BaradynKwasniewski:TopFreeActionsAndIdealsInTwistedBanachAlgebraCrossedProducts,
	title={Topologically free actions and ideals in twisted Banach algebra crossed products},
	author={Bardadyn, K. and Kwa{\'s}niewski, B.},
	journal={Proceedings of the Royal Society of Edinburgh Section A: Mathematics},
	pages={1--31},
	year={2023},
	publisher={Royal Society of Edinburgh Scotland Foundation}
}

@article {GardellaThiel:GroupAlgebrasActingOnLpSpaces,
	AUTHOR = {Gardella, E. and Thiel, H.},
	TITLE = {Group algebras acting on {$L^p$}-spaces},
	JOURNAL = {J. Fourier Anal. Appl.},
	FJOURNAL = {The Journal of Fourier Analysis and Applications},
	VOLUME = {21},
	YEAR = {2015},
	NUMBER = {6},
	PAGES = {1310--1343},
	ISSN = {1069-5869,1531-5851},
	MRCLASS = {22D20 (43A07 43A15 43A65 46E30 47L10)},
	MRNUMBER = {3421918},
	MRREVIEWER = {Pekka\ Salmi},
}

@article {GardellaLupini:RepresentationsOfEtaleGroupoidsOnLpSpaces,
	AUTHOR = {Gardella, E. and Lupini, M.},
	TITLE = {Representations of \'etale groupoids on {$L^p$}-spaces},
	JOURNAL = {Adv. Math.},
	FJOURNAL = {Advances in Mathematics},
	VOLUME = {318},
	YEAR = {2017},
	PAGES = {233--278},
	ISSN = {0001-8708,1090-2082},
	MRCLASS = {47L10 (22A22 46H05)},
	MRNUMBER = {3689741},
	MRREVIEWER = {Alexander\ Isaakovich\ Shtern},
}

@article {GardellaThiel:RepresentationsOfPConvolutionAlgebrasOnLqSpaces,
	AUTHOR = {Gardella, E. and Thiel, H.},
	TITLE = {Representations of {$p$}-convolution algebras on
	{$L^q$}-spaces},
	JOURNAL = {Trans. Amer. Math. Soc.},
	FJOURNAL = {Transactions of the American Mathematical Society},
	VOLUME = {371},
	YEAR = {2019},
	NUMBER = {3},
	PAGES = {2207--2236},
	ISSN = {0002-9947,1088-6850},
	MRCLASS = {47L10 (43A15 43A65 46E30)},
	MRNUMBER = {3894050},
	MRREVIEWER = {Sedigheh\ Barootkoob},
}

@article {Herz:TheTheoryOfPSpacesWithAnApplicationToConvolution,
	AUTHOR = {Herz, C.},
	TITLE = {The theory of {$p$}-spaces with an application to convolution
	operators},
	JOURNAL = {Trans. Amer. Math. Soc.},
	FJOURNAL = {Transactions of the American Mathematical Society},
	VOLUME = {154},
	YEAR = {1971},
	PAGES = {69--82},
	ISSN = {0002-9947,1088-6850},
	MRCLASS = {22.65},
	MRNUMBER = {272952},
	MRREVIEWER = {A.\ B.\ Willcox},
}

@misc{BardadynKwaśniewskiMcKee:BanachAlgebrasAssociatedToTwistedÉtaleGroupoidsSimplicityAndPureInfiniteness,
	title={Banach algebras associated to twisted \'{e}tale groupoids: simplicity and pure infiniteness}, 
	author={K. Bardadyn and B. Kwaśniewski and A. McKee},
	year={2024},
	eprint={2406.05717},
	archivePrefix={arXiv},
	primaryClass={math.FA},
	url={https://arxiv.org/abs/2406.05717}, 
}

@misc{Phillips:CrossedProductsOfLpAlgebrasAndKtheoryOfCuntzAlgebras,
	title={Crossed products of {$L^p$} operator algebras and the K-theory of Cuntz algebras on {$L^p$} spaces}, 
	author={N. C. Phillips},
	year={2013},
	eprint={1309.6406},
	archivePrefix={arXiv},
	primaryClass={math.FA},
	url={https://arxiv.org/abs/1309.6406}, 
}

@misc{Phillips:SimplicityOfUHFAndCuntzAlgebrasOnLpSpaces,
	title={Simplicity of UHF and Cuntz algebras on {$L^p$}~spaces}, 
	author={N. C. Phillips},
	year={2013},
	eprint={1309.0115},
	archivePrefix={arXiv},
	primaryClass={math.FA},
	url={https://arxiv.org/abs/1309.0115}, 
}

@article {WangWang:NoteOnTheEllpToeplitzAlgebra,
	AUTHOR = {Wang, Q. and Wang, Z.},
	TITLE = {Notes on the {$\ell^p$}-Toeplitz algebra on {$\ell^p(\mathbb{N})$}},
	JOURNAL = {Israel J. Math.},
	FJOURNAL = {Israel Journal of Mathematics},
	VOLUME = {245},
	YEAR = {2021},
	NUMBER = {1},
	PAGES = {153--163},
	ISSN = {0021-2172,1565-8511},
	MRCLASS = {46H35 (47L80)},
	MRNUMBER = {4357458},
}

@article {PooyaShirin:SimpleReducedLpOperatorCrossedProductsWithUniqueTrace,
	AUTHOR = {Hejazian, S. and Pooya, S.},
	TITLE = {Simple reduced {$L^p$}-operator crossed products with unique
	trace},
	JOURNAL = {J. Operator Theory},
	FJOURNAL = {Journal of Operator Theory},
	VOLUME = {74},
	YEAR = {2015},
	NUMBER = {1},
	PAGES = {133--147},
	ISSN = {0379-4024,1841-7744},
	MRCLASS = {47L10 (46H05 46H35)},
	MRNUMBER = {3383617},
	MRREVIEWER = {Daniel\ Gon\c calves},
}

@article {BlecherPhillips:LpOperatorAlgebrasWithApproximateIdentities,
	AUTHOR = {Blecher, D. P. and Phillips, N. C.},
	TITLE = {{$L^p$}-operator algebras with approximate identities, {I}},
	JOURNAL = {Pacific J. Math.},
	FJOURNAL = {Pacific Journal of Mathematics},
	VOLUME = {303},
	YEAR = {2019},
	NUMBER = {2},
	PAGES = {401--457},
	ISSN = {0030-8730,1945-5844},
	MRCLASS = {47L30 (46H10 46H35 47B38 47L10)},
	MRNUMBER = {4059950},
	MRREVIEWER = {Sedigheh\ Barootkoob},
}

@misc{Ma:FiberwiseAmenabilityOfAmpleGroupoids,
	title={Fiberwise amenability of ample \'{e}tale groupoids}, 
	author={X. Ma},
	year={2021},
	eprint={2110.11548},
	archivePrefix={arXiv},
	primaryClass={math.OA},
	url={https://arxiv.org/abs/2110.11548}, 
}

@misc{LiaoYu:KTheoryOfGroupBanachAlgebrasAndRD,
	title={K-theory of group Banach algebras and Banach property RD}, 
	author={B. Liao and G. Yu},
	year={2017},
	eprint={1708.01982},
	archivePrefix={arXiv},
	primaryClass={math.FA},
	url={https://arxiv.org/abs/1708.01982}, 
}

@article {Nathanson:NumberTheoryAndSemigroupsOfIntermediateGrowth,
	AUTHOR = {Nathanson, M. B.},
	TITLE = {Number theory and semigroups of intermediate growth},
	JOURNAL = {Amer. Math. Monthly},
	FJOURNAL = {American Mathematical Monthly},
	VOLUME = {106},
	YEAR = {1999},
	NUMBER = {7},
	PAGES = {666--669},
	ISSN = {0002-9890,1930-0972},
	MRCLASS = {20M05},
	MRNUMBER = {1720447},
}

@book {Nathanson:AdditivNumberTheory,
	AUTHOR = {Nathanson, M. B.},
	TITLE = {Additive number theory},
	SERIES = {Graduate Texts in Mathematics},
	VOLUME = {165},
	NOTE = {Inverse problems and the geometry of sumsets},
	PUBLISHER = {Springer-Verlag, New York},
	YEAR = {1996},
	PAGES = {xiv+293},
	ISBN = {0-387-94655-1},
	MRCLASS = {11Bxx (11-02)},
	MRNUMBER = {1477155},
	MRREVIEWER = {Yuri\ Bilu},
}

@incollection {HardyRamanujan:AsymptoticFormulaeInCombinatoryAnalysis,
	AUTHOR = {Hardy, G. H. and Ramanujan, S.},
	TITLE = {Asymptotic formul\ae\ in combinatory analysis [{P}roc.
	{L}ondon {M}ath. {S}oc. (2) {\bf 16} (1917), {R}ecords for 1
	{M}arch 1917]},
	BOOKTITLE = {Collected papers of {S}rinivasa {R}amanujan},
	PAGES = {244},
	PUBLISHER = {AMS Chelsea Publ., Providence, RI},
	YEAR = {2000},
	ISBN = {0-8218-2076-1},
	MRCLASS = {01A75},
	MRNUMBER = {2280876},
}

@article {Schweitzer:AShortProofThatMnAIsLocal,
	AUTHOR = {Schweitzer, L. B.},
	TITLE = {A short proof that {$M_n(A)$} is local if {$A$} is local and
	{F}r\'echet},
	JOURNAL = {Internat. J. Math.},
	FJOURNAL = {International Journal of Mathematics},
	VOLUME = {3},
	YEAR = {1992},
	NUMBER = {4},
	PAGES = {581--589},
	ISSN = {0129-167X,1793-6519},
	MRCLASS = {46H05 (46J05)},
	MRNUMBER = {1168361},
	MRREVIEWER = {Gustavo\ Corach},
}

@book {Connes:NoncommutativeGeometry,
	AUTHOR = {Connes, A.},
	TITLE = {Noncommutative geometry},
	PUBLISHER = {Academic Press, Inc., San Diego, CA},
	YEAR = {1994},
	PAGES = {xiv+661},
	ISBN = {0-12-185860-X},
	MRCLASS = {46Lxx (19K56 22D25 58B30 58G12 81T13 81V22 81V70)},
	MRNUMBER = {1303779},
	MRREVIEWER = {John\ Roe},
}

@misc{BrixHumeLi:MinimalCovers,
	title={Minimal covers with continuity-preserving transfer operators for topological dynamical systems}, 
	author={K. A. Brix and J. B. Hume and X. Li},
	year={2024},
	eprint={2408.11917},
	archivePrefix={arXiv},
	primaryClass={math.DS},
	url={https://arxiv.org/abs/2408.11917}, 
}

@article {Grigorchuk:DegreesOfGrowth,
	AUTHOR = {Grigorchuk, R. I.},
	TITLE = {Degrees of growth of finitely generated groups and the theory
	of invariant means},
	JOURNAL = {Izv. Akad. Nauk SSSR Ser. Mat.},
	FJOURNAL = {Izvestiya Akademii Nauk SSSR. Seriya Matematicheskaya},
	VOLUME = {48},
	YEAR = {1984},
	NUMBER = {5},
	PAGES = {939--985},
	ISSN = {0373-2436},
	MRCLASS = {20F05 (43A07)},
	MRNUMBER = {764305},
	MRREVIEWER = {P.\ Gerl},
}

@book {Renault:AGroupoidApproach,
	AUTHOR = {Renault, J.},
	TITLE = {A groupoid approach to {$C\sp{\ast} $}-algebras},
	SERIES = {Lecture Notes in Mathematics},
	VOLUME = {793},
	PUBLISHER = {Springer, Berlin},
	YEAR = {1980},
	PAGES = {ii+160},
	ISBN = {3-540-09977-8},
	MRCLASS = {46Lxx (22D25 22D40)},
	MRNUMBER = {584266},
	MRREVIEWER = {A.\ K.\ Seda},
}

@article {Gardella:AModernLook,
	AUTHOR = {Gardella, E.},
	TITLE = {A modern look at algebras of operators on {$L^p$}-spaces},
	JOURNAL = {Expo. Math.},
	FJOURNAL = {Expositiones Mathematicae},
	VOLUME = {39},
	YEAR = {2021},
	NUMBER = {3},
	PAGES = {420--453},
	ISSN = {0723-0869,1878-0792},
	MRCLASS = {43A15 (22D20 43A65 47L10)},
	MRNUMBER = {4314026},
	MRREVIEWER = {Roc\'io\ D\'iaz Mart\'in},
}

@misc{MaWuAlmostElementarinessAndFiberwiseAmenabilityForEtaleGroupoids,
	title={Almost elementariness and fiberwise amenability for \'etale groupoids}, 
	author={X. Ma and J. Wu},
	year={2020},
	eprint={2011.01182},
	archivePrefix={arXiv},
	primaryClass={math.OA},
	url={https://arxiv.org/abs/2011.01182}, 
}

@article {Paravicini:TheBostConjectureAndProperBanachAlgebras,
	AUTHOR = {Paravicini, W.},
	TITLE = {The {B}ost conjecture and proper {B}anach algebras},
	JOURNAL = {J. Noncommut. Geom.},
	FJOURNAL = {Journal of Noncommutative Geometry},
	VOLUME = {7},
	YEAR = {2013},
	NUMBER = {1},
	PAGES = {191--202},
	ISSN = {1661-6952,1661-6960},
	MRCLASS = {43A20 (19K35 22A22 22D15)},
	MRNUMBER = {3032815},
}

@misc{ElkiearPooya:PropertyTForBanachAlgebras,
	title={Property (T) for Banach algebras}, 
	author={E. M. Elkiær and S. Pooya},
	year={2023},
	eprint={2310.18136},
	archivePrefix={arXiv},
	primaryClass={math.FA},
	url={https://arxiv.org/abs/2310.18136}, 
}

@article {SameiWiersma:ExoticCstarAlgebrasOfGeometricGroups,
	AUTHOR = {Samei, E. and Wiersma, M.},
	TITLE = {Exotic {$\rm C^*$}-algebras of geometric groups},
	JOURNAL = {J. Funct. Anal.},
	FJOURNAL = {Journal of Functional Analysis},
	VOLUME = {286},
	YEAR = {2024},
	NUMBER = {2},
	PAGES = {Paper No. 110228, 32},
	ISSN = {0022-1236,1096-0783},
	MRCLASS = {46L05 (22D25 43A65)},
	MRNUMBER = {4664988},
	MRREVIEWER = {Xiao\ Chen},
}

@article {SameiWiersma:QuasiHermitianLocallyCompactGroupsAreAmenable,
	AUTHOR = {Samei, E. and Wiersma, M.},
	TITLE = {Quasi-{H}ermitian locally compact groups are amenable},
	JOURNAL = {Adv. Math.},
	FJOURNAL = {Advances in Mathematics},
	VOLUME = {359},
	YEAR = {2020},
	PAGES = {106897, 25},
	ISSN = {0001-8708,1090-2082},
	MRCLASS = {43A20 (22D25 43A07 43A15)},
	MRNUMBER = {4031113},
	MRREVIEWER = {Pekka\ Salmi},
}

@article{AustadOrtega:GroupoidsAndHermitianBanachstarAlgebras,
	AUTHOR = {Austad, A. and Ortega, E.},
	TITLE = {Groupoids and {H}ermitian {B}anach {$^*$}-algebras},
	JOURNAL = {Internat. J. Math.},
	FJOURNAL = {International Journal of Mathematics},
	VOLUME = {33},
	YEAR = {2022},
	NUMBER = {14},
	PAGES = {Paper No. 2250090, 25},
	ISSN = {0129-167X,1793-6519},
	MRCLASS = {22D25 (22A22 43A15 46H05 47B01)},
	MRNUMBER = {4536260},
	MRREVIEWER = {Judith\ A.\ Packer},
}

@article {ChoiGardellaThiel:RigidityResultsForLpOperatorAlgebrasAndApplications,
    AUTHOR = {Choi, Y. and Gardella, E. and Thiel, H.},
     TITLE = {Rigidity results for {$L^p$}-operator algebras and
              applications},
   JOURNAL = {Adv. Math.},
  FJOURNAL = {Advances in Mathematics},
    VOLUME = {452},
      YEAR = {2024},
     PAGES = {Paper No. 109747, 47},
      ISSN = {0001-8708,1090-2082},
   MRCLASS = {43A15 (22A22 43A65 47L10)},
  MRNUMBER = {4767397},
}

@article {GardellaThiel:RigidityResultsGroupsForLpOperatorAlgebras,
	AUTHOR = {Gardella, E. and Thiel, H.},
	TITLE = {Isomorphisms of algebras of convolution operators},
	JOURNAL = {Ann. Sci.Éc. Norm. Supér.},
	FJOURNAL = {ANNALES SCIENTIFIQUES DE L'ÉCOLE NORMALE SUPÉRIEURE},
	VOLUME = {55 (5)},
	YEAR = {2022},
	PAGES = {1433–1471},
	ISSN = { },
	MRCLASS = {43A15 (22A22 43A65 47L10)},
	MRNUMBER = {4767397},
}

@article{AustadOrtegaPalmstrom:PolynomialGrowthAndPropertyRDpForEtaleGroupoids,
	title={Polynomial growth and property $ RD_p $ for {\'e}tale groupoids with applications to $ K $-theory},
	author={Austad, A. and Ortega, E. and Palmstr{\o}m, M.},
	journal={arXiv preprint arXiv:2304.14458},
	year={2023}
}

@book{LindMarcus:SymbolicDynamics,
	AUTHOR = {Lind, D. and Marcus, B.},
	TITLE = {An Introduction to Symbolic Dynamics and Coding},
	SERIES = {},
	VOLUME = {5},
	EDITION = {Second},
	PUBLISHER = {Cambridge University Press, Cambridge},
	YEAR = {1995},
	PAGES = {xx+300},
	ISBN = {0-521-55124-2},
	MRCLASS = {46L80 (19Kxx 58G12)},
	MRNUMBER = {1656031},
}

@article{LauterMonthubertNistor:SpectralInvarianceForCertainAlgebrasOfPseudodifferentialOperators,
	AUTHOR = {Lauter, R. and Monthubert, B. and Nistor, V.},
	TITLE = {Spectral invariance for certain algebras of pseudodifferential
	operators},
	JOURNAL = {J. Inst. Math. Jussieu},
	FJOURNAL = {Journal of the Institute of Mathematics of Jussieu. JIMJ.
	Journal de l'Institut de Math\'ematiques de Jussieu},
	VOLUME = {4},
	YEAR = {2005},
	NUMBER = {3},
	PAGES = {405--442},
	ISSN = {1474-7480,1475-3030},
	MRCLASS = {47G30 (35J15 35S05 46L87 47L80 58J40)},
	MRNUMBER = {2197064},
}

@article{OztopSameiShepelska:TwistedOrliczAlgebrasAndCompleteIsomorphismToOperatorAlgebras,
	AUTHOR = {\"Oztop, S. and Samei, E. and Shepelska, V.},
	TITLE = {Twisted {O}rlicz algebras and complete isomorphism to operator
	algebras},
	JOURNAL = {J. Math. Anal. Appl.},
	FJOURNAL = {Journal of Mathematical Analysis and Applications},
	VOLUME = {477},
	YEAR = {2019},
	NUMBER = {2},
	PAGES = {1114--1132},
	ISSN = {0022-247X,1096-0813},
	MRCLASS = {43A15 (46E30 46L10 47L15 47L25)},
	MRNUMBER = {3955013},
	MRREVIEWER = {Volker\ Runde},
}

@article{Samei&Shepelska:NormcontrolledInverseionInWeightedConvolutionAlgebras,
	AUTHOR = {Samei, E. and Shepelska, V.},
	TITLE = {Norm-controlled inversion in weighted convolution algebras},
	JOURNAL = {J. Fourier Anal. Appl.},
	FJOURNAL = {The Journal of Fourier Analysis and Applications},
	VOLUME = {25},
	YEAR = {2019},
	NUMBER = {6},
	PAGES = {3018--3044},
	ISSN = {1069-5869,1531-5851},
	MRCLASS = {43A10 (43A15 46L05)},
	MRNUMBER = {4029169},
	MRREVIEWER = {Mohammad\ Fozouni},
}

@article {BondarenkoEtAl2012,
    AUTHOR = {Bondarenko, I. and Ceccherini-Silberstein, T. and
              Donno, A. and Nekrashevych, V.},
     TITLE = {On a family of {S}chreier graphs of intermediate growth
              associated with a self-similar group},
   JOURNAL = {European J. Combin.},
  FJOURNAL = {European Journal of Combinatorics},
    VOLUME = {33},
      YEAR = {2012},
    NUMBER = {7},
     PAGES = {1408--1421},
      ISSN = {0195-6698,1095-9971},
   MRCLASS = {05C25},
  MRNUMBER = {2923458},
}

@misc{KontogeorgiouWinter2022,
Author = {Kontogeorgiou, G. and Winter, M.},
Title = {(Random) Trees of Intermediate Uniform Growth},
Year = {2022},
Eprint = {arXiv:2212.01883},
}

@article {Lehner2016,
    AUTHOR = {Lehner, F.},
     TITLE = {Distinguishing graphs with intermediate growth},
   JOURNAL = {Combinatorica},
  FJOURNAL = {Combinatorica. An International Journal on Combinatorics and
              the Theory of Computing},
    VOLUME = {36},
      YEAR = {2016},
    NUMBER = {3},
     PAGES = {333--347},
      ISSN = {0209-9683,1439-6912},
   MRCLASS = {05C25 (05C15 20B27)},
  MRNUMBER = {3521117},
MRREVIEWER = {Simon\ M.\ Smith},
}

@article {MiasnikovSavchuk2015,
    AUTHOR = {Miasnikov, A. and Savchuk, D.},
     TITLE = {An example of an automatic graph of intermediate growth},
   JOURNAL = {Ann. Pure Appl. Logic},
  FJOURNAL = {Annals of Pure and Applied Logic},
    VOLUME = {166},
      YEAR = {2015},
    NUMBER = {10},
     PAGES = {1037--1048},
      ISSN = {0168-0072,1873-2461},
   MRCLASS = {68Q45 (03D05 20F10 20F65)},
  MRNUMBER = {3356618},
}

@misc{PhillipsSimplicity2019,
Author = {N. C. Phillips},
Title = {Simplicity of reduced group Banach algebras},
Year = {2019},
Eprint = {arXiv:1909.11278},
}

@article {Choi15directlyfinite,
    AUTHOR = {Choi, Y.},
     TITLE = {Directly finite algebras of pseudofunctions on locally compact
              groups},
   JOURNAL = {Glasg. Math. J.},
  FJOURNAL = {Glasgow Mathematical Journal},
    VOLUME = {57},
      YEAR = {2015},
    NUMBER = {3},
     PAGES = {693--707},
      ISSN = {0017-0895,1469-509X},
   MRCLASS = {22D15 (22D25 43A15 46L05)},
  MRNUMBER = {3395342},
MRREVIEWER = {Massoud\ Amini},
}

@article {ChungLi2018roerigidity,
    AUTHOR = {Chung, Y. C. and Li, K.},
     TITLE = {Rigidity of {$\ell^p$} {R}oe-type algebras},
   JOURNAL = {Bull. Lond. Math. Soc.},
  FJOURNAL = {Bulletin of the London Mathematical Society},
    VOLUME = {50},
      YEAR = {2018},
    NUMBER = {6},
     PAGES = {1056--1070},
      ISSN = {0024-6093,1469-2120},
   MRCLASS = {46L85 (46H15 51K05)},
  MRNUMBER = {3891943},
MRREVIEWER = {Ian\ Charlesworth},
}

@article {CortinasRodriguez19OrientedGraphs,
    AUTHOR = {Corti\~nas, G. and Rodr\'iguez, M. E.},
     TITLE = {{$L^p$}-operator algebras associated with oriented graphs},
   JOURNAL = {J. Operator Theory},
  FJOURNAL = {Journal of Operator Theory},
    VOLUME = {81},
      YEAR = {2019},
    NUMBER = {1},
     PAGES = {225--254},
      ISSN = {0379-4024,1841-7744},
   MRCLASS = {47L10 (46L55)},
  MRNUMBER = {3920695},
MRREVIEWER = {Flavius\ Pater},
}

@article {PhillipsViola2020,
    AUTHOR = {Phillips, N. C. and Viola, M. G.},
     TITLE = {Classification of spatial {$L^p$} {AF} algebras},
   JOURNAL = {Internat. J. Math.},
  FJOURNAL = {International Journal of Mathematics},
    VOLUME = {31},
      YEAR = {2020},
    NUMBER = {13},
     PAGES = {2050088, 41},
      ISSN = {0129-167X,1793-6519},
   MRCLASS = {47L10 (46L35)},
  MRNUMBER = {4192445},
MRREVIEWER = {Eusebio\ Gardella},
}

\end{document}